\documentclass{amsart}
\usepackage{amsthm,amsmath,amsfonts,mathrsfs,xypic, amssymb, eucal,comment, enumerate}

\numberwithin{equation}{section}

\theoremstyle{plain}
\newtheorem{theorem}{Theorem}[section]
\newtheorem{proposition}[theorem]{Proposition}
\newtheorem{lemma}[theorem]{Lemma}
\newtheorem{corollary}[theorem]{Corollary}

\theoremstyle{definition}
\newtheorem{definition}[theorem]{Definition}
\newtheorem{remark}[theorem]{Remark}
\newtheorem{example}[theorem]{Example}

\title{Minimal $F$-crystals and isomorphism numbers of isosimple $F$-crystals}
\author[X. Xiao]{Xiao Xiao}
\address{Mathematics Department, Utica College, 1600 Burrstone Road, Utica, NY 13502}

\begin{document}
\begin{abstract}
In this paper we generalize minimal $p$-divisible groups defined by Oort to minimal $F$-crystals over algebraically closed fields of positive characteristic. We prove a structural theorem of minimal $F$-crystals and give an explicit formula of the Frobenius endomorphism of the basic minimal $F$-crystals that are the building blocks of the general minimal $F$-crystals. We then use minimal $F$-crystals to generalize minimal heights of $p$-divisible groups and give an upper bound of the isomorphism numbers of $F$-crystals, whose isogeny type are determined by simple $F$-isocrystals, in terms of their ranks, Hodge slopes and Newton slopes. 
\end{abstract}

\maketitle

\section{Introduction}
Let $p$ be a rational prime number and $k$ an algebraically closed field of characteristic $p$. For each Newton polygon $\nu$ of slopes in the interval $[0,1]$, Oort \cite{Oort:minimal} explicitly constructs a $p$-divisible group $H(\nu)$ of Newton polygon $\nu$ over $k$ satisfying the following property: for any $p$-divisible group $D$ over $k$, if $D[p] \cong H(\nu)[p]$, then $D \cong H(\nu)$. A $p$-divisible group $X$ over $k$ with Newton polygon $\nu$ is called \emph{minimal} if $X \cong H(\nu)$. For every $p$-divisible group $D$ over $k$, there exists a unique minimal $p$-divisible group up to isomorphism that is isogenous to $D$. A $p$-divisible group $D$ is minimal if and only if its endomorphism $\mathbb{Z}_p$-algebra $\mathrm{End}(D)$ is the maximal order in its endomorphism $\mathbb{Q}_p$-algebra $\mathrm{End}(D) \otimes_{\mathbb{Z}_p} \mathbb{Q}_p$.

Loosely speaking, minimal $p$-divisible groups are $p$-divisible groups whose isomorphism types are determined by their $p$-kernels. In order to generalize this concept to $F$-crystals, we first use the (contravariant) Dieudonn\'e module theory and call a Dieudonn\'e module over $k$ minimal if its corresponding $p$-divisible group is minimal. Put it in another way, the isomorphism class of a minimal Dieudonn\'e module $\mathcal{M}$ should be determined by $\mathcal{M}$ modulo $p$. The precise meaning of an arbitrary $F$-crystal $\mathcal{M}$ modulo $p$ or more generally modulo $p^n$ for any integer $n \geq 1$ can be defined using its isomorphism number.

To recall the definition of the isomorphism number of an $F$-crystal, we fix some notations. Let $W(k)$ be the ring of $p$-typical Witt vectors with coefficients in $k$. Let $B(k) = W(k)[1/p]$ be its field of fractions. Let $\sigma$ be the Frobenius automorphism of $W(k)$ and $B(k)$. An $F$-crystal over $k$ is a pair $\mathcal{M} = (M, \varphi)$ where $M$ is a free $W(k)$-module of finite rank and $\varphi : M \to M$ is a $\sigma$-linear monomorphism. We denote the $F$-isocrystal $(M \otimes_{W(k)} B(k), \varphi \otimes_{W(k)} \sigma)$ by $\mathcal{M}[1/p] = (M[1/p], \varphi[1/p])$, and call $\mathcal{M}$ isosimple if $\mathcal{M}[1/p]$ is simple. Unless mentioned otherwise, all $F$-crystals and $F$-isocrystals in this paper are over $k$. The isomorphism number $n_{\mathcal{M}}$ of an $F$-crystal $\mathcal{M} = (M, \varphi)$ is the smallest non-negative integer such that for every $W(k)$-linear automorphism $g$ of $M$ with the property that $g \equiv \textrm{id}_M$ modulo $p^{n_{\mathcal{M}}}$, the $F$-crystal $(M, g\varphi)$ is isomorphic to $\mathcal{M}$.

The isomorphism number of an $F$-crystal over $k$ is finite by the work of Vasiu \cite[Main Theorem A]{Vasiu:CBP}. We say that the isomorphism type of an $F$-crystal (or a Dieudonn\'e module particularly) $\mathcal{M}$ over $k$ is determined by $\mathcal{M}$ modulo $p^n$ if $n_{\mathcal{M}}  \leq n$. Using the Dieudonn\'e module theory to go back to the category of $p$-divisible groups over $k$, we know that the isomorphism type of the Dieudonn\'e module $\mathcal{DM}(D)$ of a $p$-divisible group $D$ is determined by $\mathcal{DM}(D)$ modulo $p^n$ if and only if the isomorphism type of $D$ is determined by its $p^n$-kernel $D[p^n]$.

\begin{definition} \label{definition:minimalfcrystal}
An $F$-crystal $\mathcal{M}$ over $k$ is said to be \emph{minimal} if its isomorphism number $n_{\mathcal{M}} \leq 1$.
\end{definition}

It is clear from the definition of minimal $F$-crystals that for every $F$-crystal $\mathcal{M}'$ isomorphic to a minimal $F$-crystal $\mathcal{M} = (M, \varphi)$ modulo $p$, meaning that $\mathcal{M}' \cong (M, g\varphi)$ for some $g \in \mathrm{GL}_M(W(k))$ with $g \equiv \textrm{id}_M$ modulo $p$, we have that  $\mathcal{M}'$ is isomorphic to $\mathcal{M}$. The first main result of this paper is a structural theorem of minimal $F$-crystals which is proved at the end of Section \ref{section:minimalfcrystals}.

\begin{theorem} \label{theorem:maintheorema}
For each Newton polygon $\nu$, there exists a unique minimal $F$-crystal $\mathcal{M}(\nu)$ (up to isomorphism) of Newton polygon $\nu$. Moreover, if $\Lambda$ is the finite set of distinct Newton slopes of $\mathcal{M}(\nu)$ and for each $\lambda \in \Lambda$, $m_{\lambda} \in \mathbb{Z}_{>0}$ is the multiplicity of $\lambda$, then we have a finite direct sum decomposition
\[\mathcal{M}(\nu) = \bigoplus_{\lambda \in \Lambda} \mathcal{M}_{\lambda}^{m_{\lambda}}\]
where $\mathcal{M}_{\lambda} = (M_{\lambda}, \varphi_{\lambda})$ are the unique (up to isomorphism) isosimple minimal $F$-crystals of Newton slope $\lambda$.

Furthermore, for each direct summand $\mathcal{M}_{\lambda}$, if $\textrm{rank}(M_{\lambda}) = r$, then there exists a $W(k)$-basis $\{v_1, v_2,$ $\dots, v_r\}$ of $M_{\lambda}$ such that $\varphi_{\lambda}(v_i) = p^{\lfloor i\lambda \rfloor - \lfloor (i-1)\lambda \rfloor}v_{i+1}$ for all $1 \leq i \leq r-1$ and $\varphi_{\lambda}(v_r) = p^{ r\lambda - \lfloor (r-1)\lambda \rfloor}v_{1}$.
\end{theorem}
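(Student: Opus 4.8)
The plan is to construct $\mathcal{M}(\nu)$ explicitly as $\bigoplus_{\lambda\in\Lambda}\mathcal{M}_\lambda^{m_\lambda}$, where for $\lambda$ written in lowest terms with denominator $r$, $\mathcal{M}_\lambda=(M_\lambda,\varphi_\lambda)$ is the rank-$r$ $F$-crystal defined by the formula in the statement, and then to reduce the three assertions of the theorem to the following claims: (A) each $\mathcal{M}_\lambda$ is an isosimple minimal $F$-crystal of Newton slope $\lambda$, with endomorphism ring the maximal order $\mathcal{O}_{D_\lambda}$ in $D_\lambda:=\mathrm{End}(\mathcal{M}_\lambda[1/p])$; (B) $\bigoplus_{\lambda}\mathcal{M}_\lambda^{m_\lambda}$ (and in particular each $\mathcal{M}_\lambda^{m}$) is again minimal; (C) any two isogenous minimal $F$-crystals over $k$ are isomorphic. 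Granting (A)--(C): the crystal just defined has Newton polygon $\nu$ by (A) and is minimal by (B), giving existence together with the stated decomposition and formula; and if $\mathcal{N}$ is any minimal $F$-crystal of Newton polygon $\nu$ then, since over the algebraically closed field $k$ the Dieudonn\'e--Manin classification gives $\mathcal{N}[1/p]\cong\bigoplus_\lambda E_\lambda^{m_\lambda}\cong\mathcal{M}(\nu)[1/p]$ (with $E_\lambda$ the simple isocrystal of slope $\lambda$), the crystals $\mathcal{N}$ and $\mathcal{M}(\nu)$ are isogenous, hence isomorphic by (C). Applying the same argument to an isosimple minimal crystal of slope $\lambda$ also yields the uniqueness of $\mathcal{M}_\lambda$.

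For (A) I would first check that the exponents $\lfloor i\lambda\rfloor-\lfloor(i-1)\lambda\rfloor$ and $r\lambda-\lfloor(r-1)\lambda\rfloor$ are non-negative integers (immediate since $x\mapsto\lfloor x\rfloor$ is non-decreasing and $r\lambda\in\mathbb{Z}$) and that they all lie in $\{\lfloor\lambda\rfloor,\lceil\lambda\rceil\}$; this ``balanced'' pattern is exactly what will force minimality. Writing $\varphi_\lambda$ as a matrix $A$ in the basis $v_1,\dots,v_r$, a telescoping computation gives $\varphi_\lambda^{\,r}=p^{\,r\lambda}\cdot\mathrm{id}_{M_\lambda}$, so $\mathcal{M}_\lambda[1/p]$ satisfies $\varphi^r=p^{r\lambda}$ with $\gcd(r\lambda,r)=1$, hence is the simple isocrystal of slope $\lambda$; thus $\mathcal{M}_\lambda$ is isosimple of Newton slope $\lambda$. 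For minimality one must solve $g=hA(h^\sigma)^{-1}A^{-1}$ for $h\in\mathrm{GL}_r(W(k))$ given $g\equiv\mathrm{id}\bmod p$, by successive approximation, writing $h=\prod_{n\ge 1}(\mathrm{id}+p^n h_n)$ and determining $h_n$ one power of $p$ at a time; the linearized equation at each stage involves the operator $Y\mapsto Y-AY^\sigma A^{-1}$, and the balancedness of the exponents of $A$ is precisely what keeps $p\cdot AY^\sigma A^{-1}$ integral, making this equation solvable mod $p$ at every stage, so $n_{\mathcal{M}_\lambda}\le 1$. Running the same computation on $\mathrm{End}(\mathcal{M}_\lambda)$, viewed as a sub-$F$-crystal of $M_\lambda\otimes M_\lambda^\vee$ (whose Hodge slopes then lie in $\{-1,0,1\}$), identifies $\mathrm{End}(\mathcal{M}_\lambda)$ with $\mathcal{O}_{D_\lambda}$.

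For (B) and (C) I would pass through the endomorphism algebra, first establishing the $F$-crystal analogue of the criterion recalled in the introduction: an $F$-crystal $\mathcal{M}$ over $k$ is minimal if and only if $\mathrm{End}(\mathcal{M})$ is the maximal order in $\mathrm{End}(\mathcal{M}[1/p])$. Then (B) holds because $\mathrm{End}\bigl(\bigoplus_\lambda\mathcal{M}_\lambda^{m_\lambda}\bigr)=\prod_\lambda M_{m_\lambda}(\mathcal{O}_{D_\lambda})$ is a maximal order in $\prod_\lambda M_{m_\lambda}(D_\lambda)=\mathrm{End}\bigl(\bigoplus_\lambda E_\lambda^{m_\lambda}\bigr)$, using (A). For (C): if $\mathcal{M}$ is minimal then $\mathrm{End}(\mathcal{M})$, being the maximal order, contains the central idempotents of $\mathrm{End}(\mathcal{M}[1/p])$ that cut out the isotypic components, so $\mathcal{M}$ splits as the direct sum of its isoclinic parts; by the Krull--Schmidt property of $F$-crystals each part is again minimal, so it suffices to treat the isoclinic case. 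There one invokes the equivalence between isoclinic $F$-isocrystals of slope $\lambda$ and finite-dimensional $D_\lambda$-modules, under which an $F$-crystal corresponds to a lattice and a minimal one to an $\mathcal{O}_{D_\lambda}$-stable lattice; since the $p$-adic division algebra $D_\lambda$ has a unique maximal order, all $\mathcal{O}_{D_\lambda}$-lattices in a fixed $D_\lambda$-module are isomorphic, so isogenous minimal isoclinic $F$-crystals are isomorphic.

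The main obstacle is the minimality in (A), equivalently the ``maximal order $\Rightarrow$ minimal'' half of the criterion used in (B): proving that the balanced exponent pattern forces the isomorphism number to be $\le 1$. For Dieudonn\'e modules (Hodge slopes in $\{0,1\}$) this is Oort's theorem, but here the Hodge slopes of $\mathcal{M}_\lambda$ may be arbitrarily large, and the successive-approximation estimate must be controlled exactly by the fact that consecutive exponents differ by at most $1$; getting this estimate right --- and with it the identification $\mathrm{End}(\mathcal{M}_\lambda)=\mathcal{O}_{D_\lambda}$ --- is the technical core. A secondary point needing care is ensuring the Krull--Schmidt decomposition and the endomorphism-order criterion descend compatibly along deformations $g\equiv\mathrm{id}\bmod p$, so that the reduction to the isoclinic and then isosimple case in (C) is legitimate.
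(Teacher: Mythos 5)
Your route is genuinely different from the paper's. The paper first invokes Viehmann's normal-form theorem for $\sigma$-conjugacy classes to write any $F$-crystal as $(M,g\varphi_\pi)$ with $g\equiv\textrm{id}\bmod p$ and $\varphi_\pi$ a permuted diagonal; minimality then immediately collapses this to $(M,\varphi_\pi)$, which splits into cyclic summands $\mathcal{M}(\mathcal{T}_j)$ along the disjoint cycles of $\pi$. Minimality of each summand is read off from the level-torsion condition $\delta_{\mathcal{M}(\mathcal{T})}(q)\leq 1$ (Proposition \ref{proposition:minimalisoclinic}) via the theorem that level torsion equals isomorphism number, and Proposition \ref{proposition:minimalisosimpledecompose} (the $r'$-periodicity computation plus a Moore-determinant argument) refines the isoclinic cyclic piece into isosimple ones; uniqueness reduces to Dieudonn\'e modules and cites Vasiu's Main Theorem B. You instead build the candidate $\bigoplus_\lambda\mathcal{M}_\lambda^{m_\lambda}$ directly and route everything through the maximal-order criterion. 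That is a clean conceptual package, closer in spirit to how the $p$-divisible group statement is usually presented, but it concentrates the weight in two places that your sketch underestimates.

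First, in (A) you observe that the balanced exponents keep $p\cdot AY^\sigma A^{-1}$ integral and assert that this "makes the equation solvable mod $p$ at every stage." Integrality of $p\cdot AY^\sigma A^{-1}$ is exactly $\ell_{\mathcal{M}_\lambda}\leq 1$; passing from $\ell_{\mathcal{M}_\lambda}\leq 1$ to $n_{\mathcal{M}_\lambda}\leq 1$ is precisely Vasiu's level-torsion theorem, which the paper cites and you are implicitly re-proving. To run the successive approximation from scratch you must split $\textrm{End}(M_\lambda)[1/p]$ into slope pieces: on the nonzero-slope parts the geometric series converges, but on the zero-slope part the operator $Y\mapsto Y-\varphi Y\varphi^{-1}$ requires Lang's theorem relative to the level module, and the integrality bound by itself does not make the linearized operator surjective on $\textrm{End}(M_\lambda)/p$. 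This is the genuine gap.

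Second, both (B) and (C) lean on the criterion "$\mathcal{M}$ minimal $\Leftrightarrow$ $\textrm{End}(\mathcal{M})$ is a maximal order." In the paper this is Proposition \ref{proposition:minimalmaxorder} and its proof uses the decomposition from Theorem \ref{theorem:maintheorema}; using it inside the proof of Theorem \ref{theorem:maintheorema} therefore risks circularity. Your lattice argument in (C) (unique maximal order in $D_\lambda$, $\mathcal{O}_{D_\lambda}$-stable lattices all isomorphic) is the right raw material for the backward direction, but you also need the forward direction --- that a minimal $F$-crystal, defined only by stability under $g\varphi$-deformations, is automatically $\mathcal{O}_{D_\lambda}$-stable --- and that is exactly what is not available before the decomposition is established. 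You should either prove the criterion independently first, or replace (B) with a direct level-torsion estimate as in Lemma \ref{lemma:computeleveltorsion2}.
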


The structure of a minimal $F$-crystal and a minimal Dieudonn\'e module of a minimal $p$-divisible group over $k$ is very similar in the following sense. First, they can be decomposed into a finite direct sum of isosimple minimal ones. Second, if the Newton slope $\lambda \in [0,1)$, then the isosimple minimal $F$-crystal $\mathcal{M}_{\lambda}$ is exactly the isosimple minimal Dieudonn\'e module corresponding to the minimal $p$-divisible group of Newton slope $\lambda$ ; if $\lambda \geq 1$, then the isosimple minimal $F$-crystal $\mathcal{M}_{\lambda}$ is isomorphic to $(M, p^{\lfloor \lambda \rfloor}\varphi_{\lambda - \lfloor \lambda \rfloor})$ where $(M, \varphi_{\lambda - \lfloor \lambda \rfloor})$ is the isosimple minimal Dieudonn\'e module of Newton slope $\lambda - \lfloor \lambda \rfloor \in [0,1)$.

We will prove that an $F$-crystal $\mathcal{M}$ is minimal if and only if its endomorphism $\mathbb{Z}_p$-algebra $\mathrm{End}(\mathcal{M})$ is the maximal order in the endomorphism $\mathbb{Q}_p$-algebra $\mathrm{End}(\mathcal{M}[1/p]) = \mathrm{End}(\mathcal{M})[1/p]$; see Proposition \ref{proposition:minimalmaxorder}. This generalizes the same property held for minimal $p$-divisible groups \cite[Section 1.1]{Oort:minimal} and thus gives another evidence that our definition of minimal $F$-crystals is the right generalization of minimal $p$-divisible groups.

In \cite{Vasiu:reconstructing}, Vasiu gives a different definition of minimal $p$-divisible group by using some technical conditions on permutations. It turns out that Vasiu's definition coincides with Oort's definition due to \cite[Theorem 1.6]{Vasiu:reconstructing}. In this paper, we also prove that our definition of minimal $F$-crystals satisfies the same technical condition as the one in Vasiu's definition; see Proposition \ref{proposition:minimalisoclinic}.

If $\mathcal{M} = (M, \varphi)$ is a Dieudonn\'e module, by fixing a $W(k)$-basis $\{v_i\}_{i=1}^r$ of $M$, there exists an element $b \in \mathrm{GL}_r(W(k))$ such that $\varphi = b\sigma$ where $\sigma$ is the $\sigma$-linear automorphism of $M$ defined by $\sigma(v_i) = v_i$ for all $1 \leq i \leq r$. In \cite{Viehmann:truncation1}, Viehmann constructed for each $b \in \mathrm{GL}_r(B(k))$, a unique minimal element $w_b\tau_{\mu_b}$ in the $\sigma$-conjugacy class $[b] = \{g^{-1}b\sigma(g) \; | \; g \in \mathrm{GL}_r(B(k))\}$ that defines the so-called minimal truncation type $(w,\mu)$ of $[b]$; see \cite[Definition 10]{Viehmann:truncation1} for their precise definitions. The $\sigma$-conjugacy class $[b]$ represents the isogeny class of $\mathcal{M}$ and the minimal truncation type of each $[b]$ defines the minimal Dieudonn\'e module within that isogeny class. Viehmann proved that $b$ is $\mathrm{GL}_r(W(k))$-$\sigma$-conjugate to the minimal element $w_b\tau_{\mu_b}$, namely, there exists a $g \in \mathrm{GL}_r(W(k))$ such that $g^{-1}b\sigma{g} = w_b\tau_{\mu_b}$, if and only if $\mathcal{M}$ is an minimal Dieudonn\'e module. This result can be generalized to minimal $F$-crystals immediately.

In the second part of this paper, we will use minimal $F$-crystals to study isomorphism numbers of isosimple $F$-crystals. Isomorphism numbers of general $F$-crystals are not easily computable based on their very definition. Therefore we would like to give (optimal) upper bounds of them in terms of other basic invariants of $F$-crystals such as ranks, Hodge slopes and Newton slopes. Optimal upper bounds of general $F$-crystals are not known but some results in certain special cases have been found. For example, Lau, Nicole and Vasiu \cite{Vasiu:traversosolved} proved that if $\mathcal{M}$ is a Dieudonn\'e module with dimension $d$ and codimension $c$, then $n_{\mathcal{M}} \leq \lfloor 2cd/(c+d) \rfloor$ and the inequality is optimal in the sense that there exists an isoclinic Dieudonn\'e module $\mathcal{M}$ such that $n_{\mathcal{M}} = \lfloor 2cd/(c+d) \rfloor$. In the case of isoclinic $F$-crystals, upper bounds that are optimal in many cases but not in general have also been found in \cite{Xiao:computing}.

For every $F$-crystal $\mathcal{M}$, let $\mathcal{M}'$ be the unique minimal $F$-crystal isogenous to $\mathcal{M}$. We define the \emph{minimal height} $q_{\mathcal{M}}$ to be the smallest non-negative integer so that there exists an isogeny $\mathcal{M}' \hookrightarrow \mathcal{M}$ whose cokernel is annihilated by $p^{q_{\mathcal{M}}}$. This is the generalization of minimal heights of $p$-divisible groups defined in \cite{Vasiu:traversosolved}. It turns out that $n_{\mathcal{M}} \leq 2q_{\mathcal{M}} +1$ just as in the case of $p$-divisible groups; see Corollary \ref{cor:estimateq}. By using the theory of minimal $F$-crystals developed in the first part of the paper, we are able to find an upper bound of $q_{\mathcal{M}}$ and thus an upper bound $n_{\mathcal{M}}$ in the case of isosimple $F$-crystals.

\begin{theorem} \label{theorem:maintheoremb}
Let $\mathcal{M}$ be an isosimple $F$-crystal with Newton slope $s/r$ in reduced form and $e$ its maximal Hodge slope. We have the following upper bound of the isomorphism number of $\mathcal{M}$:
\[n_{\mathcal{M}} \leq 2\mathrm{max}\{{\Big \lfloor}(r-\lceil\frac{s}{e}\rceil)\frac{s}{r} {\Big \rfloor}, {\Big \lfloor} \lfloor \frac{s}{e}\rfloor (e- \frac{s}{r}) {\Big \rfloor}\}+1.\]
\end{theorem}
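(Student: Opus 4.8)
The plan is to reduce, via the inequality $n_{\mathcal M}\le 2q_{\mathcal M}+1$ of Corollary \ref{cor:estimateq}, to bounding the minimal height $q_{\mathcal M}$, and in fact to prove the stronger bound
\[
q_{\mathcal M}\ \le\ \max\Bigl\{\,\bigl\lfloor (r-\lceil s/e\rceil)\tfrac{s}{r}\bigr\rfloor,\ \bigl\lfloor \lfloor s/e\rfloor\bigl(e-\tfrac{s}{r}\bigr)\bigr\rfloor\,\Bigr\}.
\]
Write $\mathcal M=(M,\varphi)$. Since $\mathcal M$ is isosimple of Newton slope $s/r$ in reduced form, $\mathrm{rank}_{W(k)}M=r$, and the unique minimal $F$-crystal isogenous to $\mathcal M$ is $\mathcal M'=\mathcal M_{s/r}$ from Theorem \ref{theorem:maintheorema}: it has a $W(k)$-basis $v_1,\dots,v_r$ with $\varphi'(v_i)=p^{a_i}v_{i+1}$ (indices mod $r$), $a_i=\lfloor is/r\rfloor-\lfloor(i-1)s/r\rfloor\in\{\lfloor s/r\rfloor,\lceil s/r\rceil\}$, $\sum_{i=1}^{m}a_i=\lfloor ms/r\rfloor$, and in particular $(\varphi')^{r}=p^{s}\,\mathrm{id}_{M'}$. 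Since $q_{\mathcal M}$ depends only on the isomorphism class, I would work inside the simple $F$-isocrystal $V:=\mathcal M[1/p]=\mathcal M'[1/p]$ and only compare lattices there.

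First I would put things in a standard coordinate system. Let $D=\mathrm{End}(V)$, a central division algebra over $\mathbb{Q}_p$ of degree $r$ with (unique) maximal order $\mathcal O_D$; by Proposition \ref{proposition:minimalmaxorder} and Theorem \ref{theorem:maintheorema} a $\varphi$-stable $W(k)$-lattice $L\subseteq V$ is minimal (i.e.\ $(L,\varphi)\cong\mathcal M'$) if and only if $\mathcal O_D\cdot L\subseteq L$. Fixing a uniformizer $\Pi\in\mathcal O_D$ and a $W(k)$-basis of $M'$ in which $\mathcal O_D\otimes_{\mathbb{Z}_p}W(k)$ becomes the standard period-$r$ hereditary order, one may assume $M'=W(k)^r$, $\varphi=\sigma\circ\Pi^{s}$ on $V=B(k)^r$, and that the minimal lattices are precisely the members of the chain $L_j:=\Pi^{j}W(k)^r$ $(j\in\mathbb{Z})$ with $L_{j+r}=pL_j$. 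Writing $M=g\,W(k)^r$ with $g\in\mathrm{GL}_r(B(k))$, the hypothesis that $\mathcal M$ has Hodge slopes in $[0,e]$ becomes: $b:=g^{-1}\Pi^{s}\sigma(g)\in M_r(W(k))$ has elementary divisor exponents in $\{0,1,\dots,e\}$ (summing to $s$). Since an isogeny $\mathcal M'\hookrightarrow\mathcal M$ with cokernel annihilated by $p^{q}$ is the same as a minimal lattice $L\subseteq M$ with $p^{q}M\subseteq L$, and such $L$ are exactly the $L_j$ contained in $M$, we get
\[
q_{\mathcal M}\ =\ \min_{j\in\mathbb{Z}}\ \bigl(\,\text{largest elementary divisor exponent of $L_j$ in $M$}\,\bigr),
\]
so everything comes down to bounding the ``distance from $M$ to the chain $\{L_j\}$'' by the quantity displayed above.

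For the estimate I would fix an isogeny $\iota:\mathcal M'\hookrightarrow\mathcal M$ and record the $M$-adic order $t_i\in\mathbb{Z}$ of $\iota(v_i)$; these govern the cokernel of $\iota$, so choosing $\iota$ to make them small bounds $q_{\mathcal M}$. From $\varphi\circ\iota=\iota\circ\varphi'$, the cyclic relations $\varphi'(v_i)=p^{a_i}v_{i+1}$, and the two–sided inclusion $p^{e}M\subseteq\varphi(M)\subseteq M$ (equivalently, integrality of $b$ and of $p^{e}b^{-1}$) one gets $-a_i\le t_{i+1}-t_i\le e-a_i$ for every $i$; writing $s_i:=t_{i+1}-t_i+a_i$ yields a cyclic integer sequence with $0\le s_i\le e$ and $\sum_{i=1}^{r}s_i=s$, and along an arc $I$ of length $m$ starting after position $l$ one has $\sum_{i\in I}(t_{i+1}-t_i)=\sum_{i\in I}s_i-\bigl(\lfloor(l+m)s/r\rfloor-\lfloor ls/r\rfloor\bigr)$, whence $\max_i t_i-\min_i t_i=\max_{\text{arcs }I}\sum_{i\in I}(s_i-a_i)$. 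Now I would bound $\sum_{i\in I}s_i\le me$ when $m\le\lfloor s/e\rfloor$ and $\sum_{i\in I}s_i\le s$ when $m>\lfloor s/e\rfloor$, and $\sum_{i\in I}a_i\ge\lfloor ms/r\rfloor$; since $e\ge\lceil s/r\rceil$ the first estimate is increasing in $m$ and the second decreasing, so the optimum over short arcs sits at $m=\lfloor s/e\rfloor$ and produces the term $\lfloor\lfloor s/e\rfloor(e-s/r)\rfloor$, while the optimum over long arcs sits at the least admissible $m$ and produces $\lfloor(r-\lceil s/e\rceil)s/r\rfloor$. The hard part will be sharpness: the crude form of this optimization overshoots the stated maximum by one, and closing the gap requires genuinely using the freedom to rotate the chain (the $\min$ over the lattices $\Pi^{j}W(k)^r$, i.e.\ over isogenies) together with a Mazur-type constraint forcing the admissible sequences $(s_i)$ to be dominated by the actual Hodge polygon $(e_1,\dots,e_r)$ of $\mathcal M$, not merely to satisfy $0\le s_i\le e$; verifying that the lattice built along the way is honestly the minimal one — via Proposition \ref{proposition:minimalmaxorder} and the uniqueness in Theorem \ref{theorem:maintheorema} — and treating the boundary cases $e\mid s$ and $r\mid s$ are the remaining points of care.
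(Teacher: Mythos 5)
Your reduction to bounding $q_{\mathcal M}$ via Corollary \ref{cor:estimateq} matches the paper's first step, and your lattice-chain picture ($L_j=\Pi^jW(k)^r$ the full set of $\mathcal O_D$-stable lattices, $q_{\mathcal M}=\min_j$ of the exponent needed to get $p^{q}M\subseteq L_j$) is a legitimate reformulation. The derived constraint $-a_i\le t_{i+1}-t_i\le e-a_i$ from $\varphi(M)\subseteq M$ and $p^eM\subseteq\varphi(M)$ is also correct. But the proposal does not actually prove the theorem: you yourself flag that the ``crude'' arc optimization ``overshoots the stated maximum by one'' and that closing the gap requires a further idea (rotating the chain plus a Mazur-type constraint on $(s_i)$), which you neither formulate precisely nor carry out. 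As written, the argument terminates with an identified difficulty, not a resolution.

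The paper avoids this optimization entirely with a different key mechanism. Working in $K_\lambda\otimes_{B(\mathbb F_{p^r})}B(k)$ it builds an $F$-valuation $w$ with $w(M[1/p])=(1/r)\mathbb Z$ (Lemma \ref{lemma:valuationonisocrystal}), normalizes $w$ so that $M_+=M[1/p]^{w\ge 0}$ and $M_-=M[1/p]^{w\ge m_\alpha/r}$, and then observes (Lemma \ref{lemma:lowerboundofm}) that it suffices to find an integer $m_1$ such that every $m\ge m_1$ is realized as $r\cdot w(x)$ for some $x\in M$. Since $\varphi$, $\vartheta$, and $p$ preserve $M$ and shift $r\cdot w$ by $s$, $re-s$, $r$ respectively, such an $m_1$ is exactly $g(s,re-s,r)+1$ where $g$ is the Frobenius number; the Brauer--Shockley closed formula (Theorem \ref{theorem:brauer}) for $g(s,re-s,r)$ — which applies because $s+(re-s)\equiv 0\pmod r$ — then produces the displayed bound with no case analysis, no optimization over arcs, and no separate treatment of $e\mid s$ or $r\mid s$. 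This is the ingredient your proposal is missing: the problem is not combinatorial optimization over a finite cyclic sequence, but the numerical-semigroup question of which residues $m/r$ are automatically attained, and that question has a closed-form answer here. Without it, the ``overshoot by one'' you identify is a genuine gap, not a cosmetic issue: matching the exact statement requires either the Frobenius-number argument or a substantially more delicate optimization that you have only sketched as desiderata.
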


The proof of Theorem \ref{theorem:maintheoremb} is in Section \ref{section:estimate} and it mainly relies on the search of an upper bound of the $p$-exponent of $\mathcal{M}$; see Section \ref{section:valuations} for the definition. It ultimately comes down to finding a closed formula of the Frobenius number of three natural numbers $s$, $re-s$ and $r$. In general, there is no closed formula for the Frobenius number of three or more natural numbers but our situation is a special case so that we can use a result of Brauer and Shockley (Theorem \ref{theorem:brauer}) to get a closed formula of the Frobenius number of $s$, $re-s$ and $r$. As a result, we are able to find an upper bound of the $p$-exponent of $\mathcal{M}$ and hence the isomorphism number.

Compared to the upper bound provided in \cite[Theorem 1.2]{Xiao:computing} which works for all isoclinic $F$-crystals, the upper bound provided by Theorem \ref{theorem:maintheoremb} applies only to isosimple $F$-crystals. Although somewhat limited in generality, Theorem \ref{theorem:maintheoremb} does provide a sharper upper bound than the one in \cite[Theorem 1.2]{Xiao:computing} in some cases and it can also provides optimal upper bounds as well; see Examples \ref{example1} and \ref{example3}. On the other hand, the upper bound provided in \cite[Theorem 1.2]{Xiao:computing} can be slightly better than the one in Theorem \ref{theorem:maintheoremb} in some other cases; see Example \ref{example2}.

\section{Level torsion}

The level torsion of a non-ordinary $F$-crystal is equal to its isomorphism number by \cite[Theorem 1.2]{Xiao:vasiuconjecture}, hence it can be used to compute the isomorphism number of an $F$-crystal. We would like to mention that Nie also proved this result in \cite{Nie:vasiuconj} but his definition of $F$-crystal is over the ring of formal Laurent series $k[[t]]$ instead of $W(k)$. In this section, we briefly recall this notion and present some of its properties that will be useful to study minimal $F$-crystals. We refer to \cite{Vasiu:reconstructing}, \cite{Vasiu:traversosolved}, and \cite{Xiao:vasiuconjecture} for detail expositions.

Let $\mathcal{M} = (M, \varphi)$ be an $F$-crystal and $E = \textrm{End}(M)$ be the set of all $W(k)$-linear endomorphisms of $M$. Therefore $E[1/p]$ is the set all $W(k)$-automorphisms of $M[1/p]$. We have a latticed $F$-isocrystal $(E, \varphi)$ where $\varphi : E[1/p] \to E[1/p]$ is a $\sigma$-linear isomorphism defined by the rule $\varphi(h) = \varphi h \varphi^{-1}$. Here we use $\varphi$ to denote the $\sigma$-linear endomorphism of $M$, or the $\sigma$-linear automorphism of $M[1/p]$, or the $\sigma$-linear automorphism of $E[1/p]$ by abuse of notation. By Dieudonn\'e-Manin's classification of $F$-isocrystals, there is a finite direct sum decomposition of $\mathcal{M}[1/p]$ into simple $F$-isocrystals and thus we get a direct sum decomposition
\[E[1/p] \cong L^+ \oplus L^0 \oplus L^-,\]
where $L^+$ has positive Newton slopes, $L^0$ has zero Newton slopes, and $L^-$ has negative Newton slopes. Let 
\[O^+ \subset E \cap L^+, \quad O^0 \subset E \cap L^0 , \quad O^- \subset E \cap L^-\]
be the maximal $W(k)$-submodules such that 
\[\varphi(O^+) \subset O^+, \quad \varphi(O^0) = O^0, \quad \varphi^{-1}(O^-) \subset O^-.\]
Write $O := O^+ \oplus O^0 \oplus O^-$; it is a lattice of $E[1/p]$ inside $E$, which will be called the \emph{level module} of $\mathcal{M}$. The \emph{level torsion} of $\mathcal{M}$ is the smallest non-negative integer $\ell_{\mathcal{M}}$ such that
\[p^{\ell_{\mathcal{M}}} E \subset O \subset E.\]
We can compute the level torsion of isoclinic $F$-crystals in the following way. For each integer $q>0$, let $\alpha_{\mathcal{M}}(q) \geq 0$ be the largest integer such that $\varphi^q(M) \subset p^{\alpha_{\mathcal{M}}(q)}M$ and let $\beta_{\mathcal{M}}(q) \geq 0$ be the smallest integer such that $p^{\beta_{\mathcal{M}}(q)}M \subset \varphi^{q}(M)$. Set $\delta_{\mathcal{M}}(q) := \beta_{\mathcal{M}}(q) - \alpha_{\mathcal{M}}(q) \geq 0$. It is not hard to show that for any integer $q>0$, we have $\alpha_{\mathcal{M}}(q) = q\lambda$ if and only if $\beta_{\mathcal{M}}(q) = q\lambda$. Thus either $\alpha_{\mathcal{M}}(q) = \beta_{\mathcal{M}}(q) = q\lambda$ or $q\lambda \in (\alpha_{\mathcal{M}}(q), \beta_{\mathcal{M}}(q))$.

\begin{lemma} \label{lemma:computeleveltorsion1}
Let $\mathcal{M}$ be an isoclinic $F$-crystal. Then $\ell_{\mathcal{M}} = \textrm{max}\; \{\delta_{\mathcal{M}}(q) \; | \; q \in \mathbb{Z}_{>0}\}$.
\end{lemma}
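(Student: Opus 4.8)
The plan is to turn the level torsion into an intersection of lattices built from the iterates of $\varphi$ and then read it off from the elementary-divisor data recorded by $\alpha_{\mathcal{M}}(q)$ and $\beta_{\mathcal{M}}(q)$. First, since $\mathcal{M}$ is isoclinic, the latticed $F$-isocrystal $(E,\varphi)$ is isoclinic of Newton slope $0$: under $E[1/p] \cong \mathcal{M}[1/p]^{\vee} \otimes_{B(k)} \mathcal{M}[1/p]$ the common slope of $\mathcal{M}[1/p]$ is negated by dualizing and then cancelled by the tensor product. Consequently $L^+ = L^- = 0$ and $L^0 = E[1/p]$, so $O = O^0$ is simply the largest $\varphi$-stable $W(k)$-lattice contained in $E$, and $\ell_{\mathcal{M}}$ is the least $n \geq 0$ with $p^n E \subseteq O$.

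The key point is the identity $O = \bigcap_{q \in \mathbb{Z}} \varphi^q(E)$, where $\varphi^q$ for $q < 0$ is formed using the automorphism $\varphi^{-1}$ of $E[1/p]$. One inclusion is immediate: $\varphi^q(O) = O \subseteq E$ gives $O = \varphi^q(O) \subseteq \varphi^q(E)$ for every $q$. For the other, $\bigcap_q \varphi^q(E)$ is $\varphi$-stable, contained in $E$ (take $q=0$), and of full rank since it contains the lattice $O$, so maximality of $O$ forces the containment. I would then make $\varphi^q(E)$ explicit: for $h \in E[1/p]$, since $\varphi$ acts on $E[1/p]$ by $h \mapsto \varphi h \varphi^{-1}$, one has $h \in \varphi^q(E) \iff \varphi^{-q} h \varphi^q \in \mathrm{End}(M) \iff h(\varphi^q(M)) \subseteq \varphi^q(M)$, so $\varphi^q(E) = \mathrm{End}_{W(k)}(\varphi^q(M))$ as a lattice inside $E[1/p]$.

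Next comes the elementary-divisor computation. Fix $q > 0$ and choose a $W(k)$-basis $v_1,\dots,v_r$ of $M$ together with integers $0 \leq a_1 \leq \cdots \leq a_r$ such that $\{p^{a_i} v_i\}$ is a basis of $\varphi^q(M)$; then $a_1 = \alpha_{\mathcal{M}}(q)$ and $a_r = \beta_{\mathcal{M}}(q)$, and in this basis $\mathrm{End}_{W(k)}(\varphi^q(M))$ consists of the matrices $(x_{ij})$ with $x_{ij} \in p^{a_i-a_j}W(k)$. Hence the least $c \geq 0$ with $p^c E \subseteq \varphi^q(E)$ equals $a_r - a_1 = \delta_{\mathcal{M}}(q)$; the same computation applied to $\varphi^{-q}(M)$, whose relative position to $M$ has elementary divisors of the same spread, gives the same value, while $q=0$ contributes $0$. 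Assembling these, $p^c E \subseteq O = \bigcap_q \varphi^q(E)$ holds if and only if $p^c E \subseteq \varphi^q(E)$ for all $q$, i.e.\ if and only if $c \geq \delta_{\mathcal{M}}(|q|)$ for all $q \neq 0$; therefore $\ell_{\mathcal{M}} = \max\{\delta_{\mathcal{M}}(q) : q \in \mathbb{Z}_{>0}\}$, the maximum being attained because $\ell_{\mathcal{M}}$ is finite.

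I expect the main friction to be in the concrete description of $\varphi^q(E)$ and the elementary-divisor bookkeeping --- in particular tracking the $\sigma$-semilinearity of $\varphi^{\pm q}$ when passing between a basis of $M$ and a basis of $\varphi^{\pm q}(M)$ --- rather than in the conceptual reduction. One should also be slightly careful that the largest $\varphi$-stable $W(k)$-submodule of $E$ is genuinely a lattice of full rank, which is implicit in the construction of the level module and is where isoclinicity (equivalently, the unit-root nature of $(E,\varphi)$) is really used.
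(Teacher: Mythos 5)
Your proof is correct. The paper does not write out an argument but refers to the analogous Proposition 4.3(a) of Vasiu's reconstructing paper, whose proof is essentially yours: identify $O$ with $\bigcap_{q\in\mathbb{Z}}\varphi^q(E)$, express each $\varphi^q(E)$ as $\mathrm{End}_{W(k)}(\varphi^q(M))$, and read the needed power of $p$ off the elementary-divisor spread of $\varphi^q(M)\subseteq M$ (resp.\ $M\subseteq\varphi^{-q}(M)$), which is exactly $\delta_{\mathcal{M}}(q)$ in both cases since a $\sigma^q$-semilinear bijection of $M[1/p]$ preserves elementary divisors of nested lattices.
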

\begin{proof}
This is a generalization of \cite[Proposition 4.3(a)]{Vasiu:reconstructing} and can be proved in the same way.
\end{proof}

Let $\mathcal{M} = \oplus_{j \in J} \mathcal{M}_j$ be a finite direct sum of isoclinic $F$-crystal $\mathcal{M}_j = (M_j, \varphi_j)$ with Newton slopes $\lambda_j$. We can define $\varphi : \textrm{Hom}_{W(k)}(M_{j_1}[1/p],M_{j_2}[1/p]) \to \textrm{Hom}_{W(k)}(M_{j_1}[1/p],M_{j_2}[1/p])$ by the formula $\varphi(h) = \varphi_{j_2}h\varphi_{j_1}^{-1}$ where $h : M_{j_1}[1/p] \to M_{j_2}[1/p]$. For each pair $(j_1, j_2) \in J \times J$ with $\lambda_{j_1} \leq \lambda_{j_2}$, we define $\ell(j_1, j_2)$ to be the smallest integer such that for all $q \geq 0$, we have $p^{\ell(j_1,j_2)}\varphi^q(\textrm{Hom}_{W(k)}(M_{j_1},M_{j_2})) \subset \textrm{Hom}_{W(k)}(M_{j_1},M_{j_2})$. It is not hard to show that $\ell(j,j) = \ell_{\mathcal{M}_j}$.

\begin{lemma} \label{lemma:computeleveltorsion2}
If $\mathcal{M} = \oplus_{j \in J} \mathcal{M}_j$ is a finite direct sum of at least two isoclinic $F$-crystal as above, then we have the following equality
\[\ell_{\mathcal{M}} = \textrm{max} \; \{\ell(j_1,j_2) \; | \; (j_1, j_2) \in J \times J, \lambda_{j_1} \leq \lambda_{j_2}\},\]
and a basic estimate of the isomorphism number:
\[n_{\mathcal{M}} \leq \textrm{max}\{1, n_{\mathcal{M}_{j_1}}, n_{\mathcal{M}_{j_1}}+n_{\mathcal{M}_{j_2}}-1 \; | \; j_1, j_2 \in J, j_1 \neq j_2\}.\]
\end{lemma}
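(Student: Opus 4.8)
The plan is to prove the equality for $\ell_{\mathcal{M}}$ first and then deduce the estimate on $n_{\mathcal{M}}$ from it together with the computation of the level torsion of isoclinic summands.

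For the equality, I would use that $\varphi$ preserves the decomposition $\textrm{End}(M)=\bigoplus_{(j_1,j_2)\in J\times J}\textrm{Hom}_{W(k)}(M_{j_1},M_{j_2})$, the $(j_1,j_2)$-summand being isoclinic of Newton slope $\lambda_{j_2}-\lambda_{j_1}$; hence the level module $O$ decomposes compatibly and $\ell_{\mathcal{M}}$ is the maximum of the level torsions of the summands. It then suffices to identify the level torsion of the $(j_1,j_2)$-summand with $\ell(j_1,j_2)$ when $\lambda_{j_1}\le\lambda_{j_2}$ and with $\ell(j_2,j_1)$ when $\lambda_{j_1}>\lambda_{j_2}$. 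When $\lambda_{j_1}<\lambda_{j_2}$, the piece of $O$ in that summand is the maximal $\varphi$-stable lattice $\bigcap_{q\ge0}\varphi^{-q}(N)$ with $N=\textrm{Hom}_{W(k)}(M_{j_1},M_{j_2})$, and $p^mN\subset\bigcap_{q\ge0}\varphi^{-q}(N)$ is equivalent to $p^m\varphi^q(N)\subset N$ for all $q\ge0$, which is $\ell(j_1,j_2)$ on the nose. When $\lambda_{j_1}=\lambda_{j_2}$ the summand is isoclinic of slope $0$, so it admits a $\varphi$-invariant lattice comparable to $N$, the set $\{\varphi^q(N):q\in\mathbb{Z}\}$ is finite and a single $\varphi$-orbit, so the relevant intersection again reduces to $\bigcap_{q\ge0}\varphi^{-q}(N)$ and gives $\ell(j_1,j_2)$ (and $\ell(j,j)=\ell_{\mathcal{M}_j}$ is the noted equality). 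The case $\lambda_{j_1}>\lambda_{j_2}$ I would treat with the perfect $W(k)$-pairing $\textrm{Hom}_{W(k)}(M_{j_1},M_{j_2})\times\textrm{Hom}_{W(k)}(M_{j_2},M_{j_1})\to W(k)$, $(h,g)\mapsto\textrm{tr}(gh)$: here the operator $h\mapsto\varphi_{j_2}h\varphi_{j_1}^{-1}$ is adjoint to the inverse of $g\mapsto\varphi_{j_1}g\varphi_{j_2}^{-1}$, so dualizing the inclusions that cut out that piece of $O$ turns the negative-slope condition into the positive-slope one for the $(j_2,j_1)$-summand and identifies the level torsion with $\ell(j_2,j_1)$. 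Collecting the summands gives the displayed formula.

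For the estimate on $n_{\mathcal{M}}$: if every $\mathcal{M}_j$ is ordinary then $\mathcal{M}$ is ordinary, hence minimal by Theorem \ref{theorem:maintheorema}, so $n_{\mathcal{M}}\le1$, which is dominated by the right-hand side. Otherwise $\mathcal{M}$ is non-ordinary, so $n_{\mathcal{M}}=\ell_{\mathcal{M}}$ by \cite[Theorem 1.2]{Xiao:vasiuconjecture}, and by the first part it is enough to bound each $\ell(j_1,j_2)$. For $j_1=j_2=j$ we have $\ell(j,j)=\ell_{\mathcal{M}_j}=n_{\mathcal{M}_j}$ (Lemma \ref{lemma:computeleveltorsion1} together with \cite[Theorem 1.2]{Xiao:vasiuconjecture}; both sides vanish for ordinary isoclinic $\mathcal{M}_j$). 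For $j_1\ne j_2$ with $\lambda_{j_1}\le\lambda_{j_2}$, from $\varphi^q(h)=\varphi_{j_2}^{q}\,h\,\varphi_{j_1}^{-q}$ and the inclusions $\varphi_{j_1}^{-q}(M_{j_1})\subset p^{-\beta_{\mathcal{M}_{j_1}}(q)}M_{j_1}$, $\varphi_{j_2}^{q}(M_{j_2})\subset p^{\alpha_{\mathcal{M}_{j_2}}(q)}M_{j_2}$ one gets $\varphi^q\big(\textrm{Hom}_{W(k)}(M_{j_1},M_{j_2})\big)\subset p^{\alpha_{\mathcal{M}_{j_2}}(q)-\beta_{\mathcal{M}_{j_1}}(q)}\textrm{Hom}_{W(k)}(M_{j_1},M_{j_2})$, so
\[\ell(j_1,j_2)\le\max_{q\ge0}\big(\beta_{\mathcal{M}_{j_1}}(q)-\alpha_{\mathcal{M}_{j_2}}(q)\big).\]
It then remains to show $\beta_{\mathcal{M}_{j_1}}(q)-\alpha_{\mathcal{M}_{j_2}}(q)\le\max\{0,\,n_{\mathcal{M}_{j_1}}+n_{\mathcal{M}_{j_2}}-1\}$ for all $q\ge0$. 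Combining $\alpha_{\mathcal{M}_{j_1}}(q)\le\lfloor q\lambda_{j_1}\rfloor$, $\beta_{\mathcal{M}_{j_2}}(q)\ge\lceil q\lambda_{j_2}\rceil$ and $\delta_{\mathcal{M}_{j_i}}(q)\le n_{\mathcal{M}_{j_i}}$ (Lemma \ref{lemma:computeleveltorsion1}) yields $\beta_{\mathcal{M}_{j_1}}(q)-\alpha_{\mathcal{M}_{j_2}}(q)\le n_{\mathcal{M}_{j_1}}+n_{\mathcal{M}_{j_2}}+\lfloor q\lambda_{j_1}\rfloor-\lceil q\lambda_{j_2}\rceil$, and $\lfloor q\lambda_{j_1}\rfloor-\lceil q\lambda_{j_2}\rceil\le-1$ for every $q\ge1$ except when $\lambda_{j_1}=\lambda_{j_2}=\lambda$ and $q\lambda\in\mathbb{Z}$; in that exceptional case I would invoke the fact recalled before Lemma \ref{lemma:computeleveltorsion1} that $q\lambda\in(\alpha_{\mathcal{M}}(q),\beta_{\mathcal{M}}(q))$ whenever $\delta_{\mathcal{M}}(q)>0$, which gives $\beta_{\mathcal{M}_{j_1}}(q)\le q\lambda-1+\delta_{\mathcal{M}_{j_1}}(q)$ and $\alpha_{\mathcal{M}_{j_2}}(q)\ge q\lambda+1-\delta_{\mathcal{M}_{j_2}}(q)$ whenever the corresponding $\delta$ is positive, and a short case split on whether these $\delta$'s vanish finishes the bound (note that a $\delta$ vanishing identically forces the Newton slope of that summand, hence $q\lambda$, to be an integer, so the split is consistent). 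Taking the maximum over all $(j_1,j_2)$ gives $\ell_{\mathcal{M}}\le\max\{1,n_{\mathcal{M}_{j_1}},n_{\mathcal{M}_{j_1}}+n_{\mathcal{M}_{j_2}}-1\mid j_1,j_2\in J,\ j_1\ne j_2\}$, hence the stated estimate for $n_{\mathcal{M}}$.

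The main obstacle is the last step: the straightforward estimates only give $\beta_{\mathcal{M}_{j_1}}(q)-\alpha_{\mathcal{M}_{j_2}}(q)\le n_{\mathcal{M}_{j_1}}+n_{\mathcal{M}_{j_2}}$ in the equal-slope, integral-$q\lambda$ case, and recovering the extra unit is exactly what forces the careful use of the dichotomy $\delta_{\mathcal{M}}(q)=0$ versus $q\lambda\in(\alpha_{\mathcal{M}}(q),\beta_{\mathcal{M}}(q))$; the rest (the block decomposition of $O$, the trace-pairing duality for the negative-slope blocks, and the finiteness of $\varphi$-orbits of lattices in the slope-$0$ blocks) is routine.
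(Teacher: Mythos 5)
Your proof is essentially correct and reconstructs in full detail the argument that the paper only gestures at by citing \cite[Scholium 3.5.1]{Vasiu:reconstructing} and \cite[Proposition 1.4.3]{Vasiu:reconstructing}. The block decomposition of the level module along $\textrm{End}(M)=\bigoplus\textrm{Hom}_{W(k)}(M_{j_1},M_{j_2})$, the identification of the $(j_1,j_2)$-block's level torsion with $\ell(j_1,j_2)$ (including the trace-pairing duality for the negative-slope blocks and the finite-$\varphi$-orbit argument for the slope-zero blocks), and the careful dichotomy $\delta=0$ versus $q\lambda\in(\alpha,\beta)$ to recover the crucial $-1$ in the equal-slope case are all sound and match the strategy of the cited results.

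One genuine (if small) slip: in the estimate for $n_{\mathcal{M}}$, you dispose of the all-ordinary case by saying $\mathcal{M}$ is ``ordinary, hence minimal by Theorem \ref{theorem:maintheorema}.'' This is circular: the proof of Theorem \ref{theorem:maintheorema} in the paper goes through Proposition \ref{proposition:minimalisosimpledecompose} and the paragraph before Proposition \ref{proposition:minimalisoclinic}, both of which invoke Lemma \ref{lemma:computeleveltorsion2}. Moreover Theorem \ref{theorem:maintheorema} doesn't actually state ``ordinary $\Rightarrow$ minimal.'' The fact you need, that an ordinary $F$-crystal has $n_{\mathcal{M}}\le 1$, should instead be cited directly from \cite[Proposition 2.9 and Corollary 2.11]{Xiao:computing}, exactly as the paper does in the paragraph preceding Proposition \ref{proposition:minimalisoclinic}. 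The rest of the argument is unaffected by this repair. (Your parenthetical remark near the end about ``a $\delta$ vanishing identically'' is superfluous --- the split is over $\delta_{\mathcal{M}_{j_i}}(q)$ for the fixed $q$, not over identical vanishing --- but it does no harm.)
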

\begin{proof}
This is a generalization of \cite[Scholium 3.5.1]{Vasiu:reconstructing} and \cite[Proposition 1.4.3]{Vasiu:reconstructing}. It can be proved in the same way.
\end{proof}

\section{Minimal $F$-crystals} \label{section:minimalfcrystals}

In this section, we will generalize the concept of minimal $p$-divisible groups to minimal $F$-crystals. Unlike the definition of minimal $p$-divisible groups, which is given by an explicit construction in \cite{Oort:minimal} or \cite{Vasiu:reconstructing}, our definition of minimal $F$-crystals simply requires that the isomorphism numbers are less than or equal to $1$. After examining some basic properties of minimal $F$-crystals, we show that our definition of minimal $F$-crystals is the right generalization of minimal $p$-divisible groups.

Let $M$ be a free $W(k)$-module of rank $r$. For each pair $\mathcal{T} = (\mathcal{B}, \eta)$ where $\mathcal{B} = \{v_1, v_2, \dots, v_r\}$ is a $W(k)$-basis of $M$ and $\eta = (e_1, e_2, \dots, e_r)$ is a sequence of $r$ non-negative integers, we construct an $F$-crystal $\mathcal{M}(\mathcal{T}) = (M, \varphi_{\mathcal{T}})$ by defining $\varphi_{\mathcal{T}}(v_1) = p^{e_1}v_2, \varphi_{\mathcal{T}}(v_2) = p^{e_2}(v_3), \dots, \varphi_{\mathcal{T}}(v_r) = p^{e_r}v_1$. To ease notation, we adopt the convention that $v_{mr+i} := v_i$ for all $m \in \mathbb{Z}$. As a result, we can rewrite the definition of $\varphi_{\mathcal{T}}$ as $\varphi_{\mathcal{T}}(v_i) = p^{e_i}v_{i+1}$ for all $1 \leq i \leq r$. Clearly $e_1, e_2, \dots, e_r$ are the Hodge slopes of $\mathcal{M}(\mathcal{T})$ and $\lambda(\mathcal{T}) := \sum_{i=1}^re_i/r$ is the Newton slope of $\mathcal{M}(\mathcal{T})$. Moreover, if $0 \leq e_i \leq 1$ for all $1 \leq i \leq r$, then the $F$-crystal $\mathcal{M}(\mathcal{T})$ is called a cyclic Dieudonn\'e-Fontaine $p$-divisible object in \cite[Definition 2.2.2(a)]{Vasiu:CBP}.

Let $(M_0, \varphi_0)$ be a Dieudonn\'e module with Hodge slopes $e_1, e_2, \dots, e_r$. Using \cite[1.4 Basic Theorem C (a)]{Vasiu:modp} in our context by letting $G = \mathrm{GL}_{M_0}$, we know that for any $g_0 \in \mathrm{GL}_{M_0}(W(k))$, there exists an element $g \in \mathrm{GL}_{M_0}(W(k))$ with the property that $g \equiv \mathrm{id}_{M_0}$ modulo $p$, a $W(k)$-basis $\mathcal{B}_0 = \{v_1, v_2, \dots, v_r\}$ of $M_0$, and a permutation $\pi$ on the set $I=\{1,2,\dots,r\}$ that defines a $W(k)$-linear monomorphism $g_{\pi} :M_0 \to M_0$ with the property  that $g_{\pi}(v_i) = p^{e_i}v_{\pi(i)}$ for all $i \in I$ such that $(M_0, g_0\varphi_0)$ is isomorphic to $(M_0, gg_{\pi}\varphi_0)$. The same result is also true for $F$-crystals.

\begin{theorem}
For any $F$-crystal $\mathcal{M} = (M, \varphi)$ with Hodge slopes $e_1, e_2, \dots, e_r$, there exist an element $g \in \textrm{GL}_M(W(k))$ with the property that $g \equiv \textrm{id}_M$ modulo $p$, an $W(k)$-basis $\mathcal{B} = \{v_1, v_2, \dots, v_r\}$ of $M$, and a permutation $\pi$ on the set $I = \{1, 2, \dots, r\}$ that defines a $\sigma$-linear monomorphism $\varphi_{\pi} : M \to M$ with the property that $\varphi_{\pi}(v_i) = p^{e_i}v_{\pi(i)}$ for all $i \in I$ such that $\mathcal{M}$ is isomorphic to $(M, g\varphi_{\pi})$.
\end{theorem}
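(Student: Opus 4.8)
The plan is to obtain the statement exactly as the Dieudonn\'e module case was obtained above, namely from \cite[1.4 Basic Theorem C (a)]{Vasiu:modp} applied with $G = \mathrm{GL}_M$. The only feature not already present there is that the Hodge cocharacter of a general $F$-crystal need not be minuscule, so one must invoke \emph{loc. cit.} for the (possibly non-minuscule) cocharacter of $\mathrm{GL}_M$ determined by $e_1, \dots, e_r$, rather than only for the minuscule cocharacters arising from $p$-divisible groups.

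To fit the cited result, one first normalizes $\varphi$, just as in the Dieudonn\'e module discussion. Since $W(k)$ is a discrete valuation ring, the elementary divisor theorem provides a $W(k)$-basis $\{w_1, \dots, w_r\}$ of $M$ for which $\{p^{e_1}w_1, \dots, p^{e_r}w_r\}$ is a $W(k)$-basis of $\varphi(M)$; as $\{\varphi(w_1), \dots, \varphi(w_r)\}$ is also a $W(k)$-basis of $\varphi(M)$, the matrix of $\varphi$ in the basis $\{w_i\}$ has the shape $DA$ with $D = \mathrm{diag}(p^{e_1}, \dots, p^{e_r})$ and $A \in \mathrm{GL}_r(W(k))$. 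Writing $\varphi_0$ for the $\sigma$-linear monomorphism with $\varphi_0(w_i) = p^{e_i}w_i$, one has $\varphi = \varphi_0 \circ \sigma^{-1}(A)$ as maps $M \to M$, so $\sigma$-conjugating by the $W(k)$-linear automorphism $\sigma^{-1}(A)$ gives an isomorphism $\mathcal{M} \cong (M, g_0\varphi_0)$ with $g_0 := \sigma^{-1}(A) \in \mathrm{GL}_M(W(k))$. Now \cite[1.4 Basic Theorem C (a)]{Vasiu:modp}, with $G = \mathrm{GL}_M$ and the cocharacter of $\mathrm{GL}_M$ associated to $e_1, \dots, e_r$, can be invoked in exactly the same way as in the Dieudonn\'e module case, and it produces an element $g \in \mathrm{GL}_M(W(k))$ with $g \equiv \mathrm{id}_M \bmod p$, a $W(k)$-basis $\mathcal{B} = \{v_1, \dots, v_r\}$ of $M$, and a permutation $\pi$ of $I = \{1, \dots, r\}$ such that $\mathcal{M}$ is isomorphic to $(M, g\varphi_\pi)$, where $\varphi_\pi$ is the $\sigma$-linear map with $\varphi_\pi(v_i) = p^{e_i}v_{\pi(i)}$ for all $i \in I$. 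That $\varphi_\pi$ is a $\sigma$-linear monomorphism is automatic, since each $p^{e_i} \neq 0$ and $\pi$ is a bijection, so the images $p^{e_i}v_{\pi(i)}$ span a $W(k)$-submodule of $M$ of full rank.

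The step I expect to be the main obstacle is securing the input from \cite{Vasiu:modp} in the needed generality: one has to know that \cite[1.4 Basic Theorem C (a)]{Vasiu:modp}, applied with $G = \mathrm{GL}_M$, remains valid with no minuscule hypothesis on the cocharacter. Granting this, everything else---the elementary divisor normalization and the $\sigma$-conjugation bookkeeping---is routine linear algebra over $W(k)$. If one preferred not to invoke that generality, the remaining task would be to reduce the general case to the minuscule (Dieudonn\'e module) case, and that reduction, rather than the underlying group-theoretic input, is where the real difficulty would lie.
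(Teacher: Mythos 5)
Your normalization step---using elementary divisors to write $\varphi = \varphi_0 \circ \sigma^{-1}(A)$ with $\varphi_0(w_i) = p^{e_i}w_i$---matches the first move of the paper's proof (the choice of bases $\mathcal{B}_1,\mathcal{B}_2$ with $\varphi(u_i)=p^{e_i}w_i$ and the factorization $\varphi = b\sigma$). From there, however, your proposal explicitly punts the crux: you note that you need \cite[1.4 Basic Theorem C (a)]{Vasiu:modp} to hold for arbitrary (non-minuscule) cocharacters of $\mathrm{GL}_M$, and you acknowledge that you do not know whether it does. That is a genuine gap, not a routine detail: 1.4 Basic Theorem C is stated in the Dieudonn\'e module (minuscule) setting, and the passage to general Hodge slopes is precisely what makes this statement a new theorem rather than a recollection.

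The paper closes this gap in two ways that your proposal does not reach. First, it cites \cite[1.8 Generalizations]{Vasiu:modp}---the part of Vasiu's paper that actually addresses non-minuscule cocharacters---rather than 1.4 Basic Theorem C (a). Second, and more substantively, it gives a self-contained proof via Viehmann \cite[Theorem 1(1)]{Viehmann:truncation1}: set up the maximal torus $T$ normalizing the $\mathbb{Z}_p$-span of $\mathcal{B}_1$, its normalizer $N$ and Weyl group $W$, write the cocharacter datum $g_\varphi = \mu(p) \in T(B(k))$, and use Viehmann's result to $\mathrm{GL}_M(W(k))$-$\sigma$-conjugate $b$ to an element of the form $g_1 g_w g_\varphi g_2$ with $g_w \in N(W(k))$ a Weyl representative and $g_1,g_2 \in \mathrm{Ker}(\mathrm{GL}_M(W(k)) \to \mathrm{GL}_M(k))$. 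A short rearrangement (replacing $g_2\sigma$ by $\sigma\sigma^{-1}(g_2)$ and $\sigma$-conjugating) absorbs $g_1,g_2$ into a single $g \equiv \mathrm{id}_M \bmod p$ on the left, and $g_w$ produces the permutation $\pi$. One further point your sketch skips is that $g_w$ only sends $\langle u_i\rangle_{W(k)}$ to $\langle u_{\pi(i)}\rangle_{W(k)}$, not $u_i$ to $u_{\pi(i)}$ on the nose; the paper rescales $v_i := x_iu_i$ by suitable units $x_i$ (using that $k$ is algebraically closed) to force $g_w(v_i) = v_{\pi(i)}$ exactly, which is needed for the displayed form $\varphi_\pi(v_i) = p^{e_i}v_{\pi(i)}$. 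In short: your opening is correct and coincides with the paper, but the heart of the argument---replacing the unverified non-minuscule case of 1.4 Basic Theorem C (a) with Viehmann's conjugacy result and the subsequent bookkeeping---is missing.
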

\begin{proof}
This theorem is a direct consequence of \cite[1.8 Generalizations]{Vasiu:modp}. We give a detailed proof here using a result of \cite{Viehmann:truncation1} for the sake of completion.

Let $\mathcal{B}_1 = \{u_1, u_2, \dots, u_r\}$ and $\mathcal{B}_2 = \{w_1, w_2, \dots, w_r\}$ be two $W(k)$-bases of $M$ such that $\varphi(u_i) = p^{e_i}w_i$ for all $i \in I$. Let $g_0 : M \to M$ be the $W(k)$-linear automorphism such that $g_0(u_i) = w_i$ for all $i \in I$, $g_{\varphi}: M \to M$ be the $W(k)$-linear monomorphism such that $g_{\varphi}(u_i) = p^{e_i}u_i$ for all $i \in I$, and $\sigma: M \to M$ be the $\sigma$-linear automorphism such that $\sigma(u_i) = u_i$ for all $i \in I$ . Hence $\varphi = g_0g_{\varphi}\sigma = b\sigma$ where $b := g_og_{\varphi} \in \mathrm{GL}_M(B(k))$.

Let $T$ be the maximal split torus of $\mathrm{GL}_M$ that normalizes the $\mathbb{Z}_p$-span of $\mathcal{B}_1$. Let $N$ be the normalizer of $T$ in $\mathrm{GL}_M$ and $W = N/T$ be the Weyl group of $\mathrm{GL}_M$. Hence $g_{\varphi} \in T(B(k))$ and it is equal to $\mu(p)$ where $\mu : \mathbb{G}_m \to T$ is the cocharacter which acts on each span of $u_i$ via the $e_i$-th power of the identity character.

By \cite[Theorem 1 (1)]{Viehmann:truncation1}, there exists an element $w \in W$ (here our $w$ is the equal to $ww_0w_{0,\mu}$ in the sense of \cite{Viehmann:truncation1}) and a representative $g_w \in N(W(k))$ such that the $\mathrm{GL}_M(W(k))$-$\sigma$-conjugacy class of $b \in \mathrm{GL}_M(B(k))$ contains an element of the form $g_1g_wg_{\varphi}g_2$ where $g_1,g_2 \in \textrm{Ker}(\mathrm{GL}_M(W(k)) \to \mathrm{GL}_M(k))$. Therefore there exists an element $\tilde{g} \in \mathrm{GL}_M(W(k))$ such that 
\[(M, \varphi) = (M, b\sigma) \cong (M, \tilde{g}^{-1}b\sigma(\tilde{g})\sigma) = (M, g_1g_wg_{\varphi}g_2\sigma).\]
Furthermore, we have
\[(M, g_1g_wg_{\varphi}g_2\sigma) = (M, g_1g_wg_{\varphi}\sigma \sigma^{-1}(g_2)) \cong (M, \sigma(g_2^{-1})g_1g_wg_{\varphi}\sigma) = (M, gg_wg_{\varphi}\sigma),\]
where $g: = \sigma(g_2^{-1})g_1 \in \textrm{Ker}(\mathrm{GL}_M(W(k)) \to \mathrm{GL}_M(k))$, that is, $g \equiv \textrm{id}_M$ modulo $p$.

Let $\pi$ be a permutation on $I$ such that $g_w(\langle u_i \rangle_{W(k)}) = \langle u_{\pi(i)} \rangle_{W(k)}$ for all $i \in I$. We can find units $x_1, x_2, \dots, x_r \in W(k)$ such that $v_i := x_iu_i$ and $g_w(v_i) = v_{\pi(i)}$ for all $i\in I$ because $k$ is algebraically closed. Let $\varphi_{\pi} := g_w g_{\varphi}\sigma$, we know that $\varphi_{\pi}$ is the $\sigma$-linear monomorphism such that $\varphi_{\pi}(v_i) = p^{e_i}v_{\pi(i)}$ for all $i \in I$. Hence $(M, \varphi) \cong (M, g\varphi_{\pi})$ is in the desired form and this completes the proof of the theorem.
\end{proof}

We now assume that $\mathcal{M}$ is minimal.  Then $\mathcal{M} = (M, g\varphi_{\pi})$ is isomorphic to $(M, \varphi_{\pi})$ as $g \equiv \textrm{id}_M$ modulo $p$. As the permutation $\pi = \prod_{j \in J} \pi_j$ can be decomposed into product of disjoint cycles $\pi_j$, we get that the $F$-crystal $\mathcal{M}$ is isomorphic to $\oplus_{j \in J} (M_j, \varphi_{\pi_j})$ where the rank of $M_j$ is equal to the length of the cycle $\pi_j$. For each $j \in J$, there exists a pair $\mathcal{T}_j = (\mathcal{B}_j, \eta_j)$ such that $(M_j, \varphi_{\pi_j})$ is isomorphic to $\mathcal{M}_j(\mathcal{T}_j)$. If $\mathcal{M}_j(\mathcal{T}_j)$ is ordinary, then $n_{\mathcal{M}_j(\mathcal{T}_j)} \leq 1$; see \cite[Proposition 2.9 and Corollary 2.11]{Xiao:computing}. If $\mathcal{M}_j(\mathcal{T}_j)$ is non-ordinary, we have $\ell_{\mathcal{M}_j(\mathcal{T}_j)} \leq \ell_{\mathcal{M}}$ by Lemma \ref{lemma:computeleveltorsion2}. As $\ell_{\mathcal{M}_j(\mathcal{T}_j)} = n_{\mathcal{M}_j(\mathcal{T})}$ and $\ell_{\mathcal{M}} = n_{\mathcal{M}}$ by \cite[Theorem 1.2]{Xiao:vasiuconjecture}, we have $n_{\mathcal{M}_j(\mathcal{T}_j)} \leq n_{\mathcal{M}} \leq 1$ . Thus each direct summand $\mathcal{M}_j(\mathcal{T}_j)$ of $\mathcal{M}$ is also minimal. 

We now give a concrete description of minimal $F$-crystals of the form $\mathcal{M}(\mathcal{T})$.

\begin{proposition} \label{proposition:minimalisoclinic}
Let $\mathcal{T} = (\mathcal{B}, \eta)$ be a pair, where $\mathcal{B} = \{v_1, v_2, \dots, v_r\}$ is a $W(k)$-basis of $M$ and $\eta = (e_1, e_2, \dots, e_r)$ is a sequence of nonnegative integers. Set $\lambda(\mathcal{T}) := \sum_{i=1}^r e_i /r$. The $F$-crystal $\mathcal{M}(\mathcal{T}) = (M, \varphi_{\mathcal{T}})$ is minimal if and only if $\varphi_{\mathcal{T}}$ satisfies the following condition:
\[\textrm{for all} \quad  1 \leq i, q \leq r, \quad \textrm{we have} \quad \varphi_{\mathcal{T}}^q(v_i) = p^{\lfloor q\lambda(\mathcal{T})\rfloor + \epsilon_q(i)}v_{i+q} \quad \textrm{where} \quad \epsilon_q(i) \in \{0,1\}. \;  \tag{*}\]
\end{proposition}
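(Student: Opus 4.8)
The plan is to use the identification $n_{\mathcal{M}(\mathcal{T})} = \ell_{\mathcal{M}(\mathcal{T})}$ (for non-ordinary $\mathcal{M}(\mathcal{T})$, by \cite[Theorem 1.2]{Xiao:vasiuconjecture}, the ordinary case being immediate) together with the explicit computation of the level torsion of an isoclinic $F$-crystal in Lemma \ref{lemma:computeleveltorsion1}, namely $\ell_{\mathcal{M}(\mathcal{T})} = \max\{\delta_{\mathcal{M}(\mathcal{T})}(q) \mid q \in \mathbb{Z}_{>0}\}$. So the statement ``$\mathcal{M}(\mathcal{T})$ is minimal'' translates to ``$\delta_{\mathcal{M}(\mathcal{T})}(q) \le 1$ for all $q > 0$,'' and I want to show this is equivalent to condition (*). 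First I would record the explicit effect of iterating $\varphi_{\mathcal{T}}$: because $\varphi_{\mathcal{T}}(v_i) = p^{e_i} v_{i+1}$ with indices read cyclically mod $r$, one gets $\varphi_{\mathcal{T}}^q(v_i) = p^{E_q(i)} v_{i+q}$ where $E_q(i) = e_i + e_{i+1} + \cdots + e_{i+q-1}$ (a sum of $q$ consecutive Hodge slopes around the cycle). Hence $\alpha_{\mathcal{M}(\mathcal{T})}(q) = \min_i E_q(i)$ and $\beta_{\mathcal{M}(\mathcal{T})}(q) = \max_i E_q(i)$, so $\delta_{\mathcal{M}(\mathcal{T})}(q) = \max_i E_q(i) - \min_i E_q(i)$.

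Next I would handle the reduction from ``all $q > 0$'' to ``$1 \le q \le r$.'' The key observation is that $E_{q+r}(i) = E_q(i) + E_r(i) = E_q(i) + r\lambda(\mathcal{T})$, since summing all $r$ Hodge slopes once around the cycle gives $r\lambda(\mathcal{T}) =: d \in \mathbb{Z}$, independent of the starting index $i$. Therefore adding $r$ to $q$ shifts every $E_q(i)$ by the same constant $d$, so $\delta_{\mathcal{M}(\mathcal{T})}(q+r) = \delta_{\mathcal{M}(\mathcal{T})}(q)$; the function $q \mapsto \delta_{\mathcal{M}(\mathcal{T})}(q)$ is periodic with period $r$ (with the convention that $q = r$ corresponds to $q \equiv 0$, where $\delta = 0$ anyway). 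Consequently $\ell_{\mathcal{M}(\mathcal{T})} = \max\{\delta_{\mathcal{M}(\mathcal{T})}(q) \mid 1 \le q \le r\}$, and it suffices to analyze $1 \le q \le r$.

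It then remains to verify the equivalence, for each fixed $q$ with $1 \le q \le r$, between ``$\delta_{\mathcal{M}(\mathcal{T})}(q) \le 1$'' and ``$E_q(i) \in \{\lfloor q\lambda(\mathcal{T})\rfloor, \lfloor q\lambda(\mathcal{T})\rfloor + 1\}$ for all $i$.'' For the backward direction this is immediate: if every $E_q(i)$ lies in a two-element set of consecutive integers then the spread is at most $1$. For the forward direction, suppose $\max_i E_q(i) - \min_i E_q(i) \le 1$; I must pin down which consecutive pair of integers the values $E_q(i)$ occupy, and the claim is that the lower one is exactly $\lfloor q\lambda(\mathcal{T})\rfloor$. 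This uses that the average of $E_q(i)$ over a full set of $r$ cyclic windows equals $q\lambda(\mathcal{T})$: indeed $\sum_{i=1}^r E_q(i) = q \sum_{i=1}^r e_i = qr\lambda(\mathcal{T})$, because each $e_j$ appears in exactly $q$ of the windows. So the arithmetic mean of the integers $E_q(i)$ is $q\lambda(\mathcal{T})$; since they all lie within $\{n, n+1\}$ for some integer $n$, their mean lies in $[n, n+1]$, forcing $n = \lfloor q\lambda(\mathcal{T})\rfloor$ (and when $q\lambda(\mathcal{T})$ is an integer, all $E_q(i)$ equal it, which is still consistent with the stated form taking $\epsilon_q(i) = 0$). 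This gives exactly (*). The main obstacle — more a bookkeeping point than a genuine difficulty — is the careful treatment of the averaging/floor argument in the forward direction, especially the edge case where $q\lambda(\mathcal{T}) \in \mathbb{Z}$, and making sure the cyclic-window sums are counted correctly; everything else is a direct unwinding of the definitions of $\alpha$, $\beta$, $\delta$ and the cited lemmas.
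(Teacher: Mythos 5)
Your proposal is correct and follows essentially the same route as the paper: both reduce minimality to $\delta_{\mathcal{M}(\mathcal{T})}(q)\le 1$ via the identification $n=\ell$ for non-ordinary $F$-crystals and Lemma~\ref{lemma:computeleveltorsion1}, and then match $\delta(q)\le 1$ to condition~(*). Two small points where you add useful explicitness: you spell out the $r$-periodicity of $q\mapsto\delta_{\mathcal{M}(\mathcal{T})}(q)$ needed to pass between ``$1\le q\le r$'' in (*) and ``all $q>0$'' in the level-torsion formula (the paper leaps over this silently), and your averaging argument $\tfrac1r\sum_i E_q(i)=q\lambda(\mathcal{T})$ gives a self-contained proof of the auxiliary fact (stated in the paper without proof, just before Lemma~\ref{lemma:computeleveltorsion1}) that either $\alpha(q)=\beta(q)=q\lambda$ or $q\lambda\in(\alpha(q),\beta(q))$. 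The one thing you wave at with ``the ordinary case being immediate'' deserves a sentence: for a single cyclic $\mathcal{M}(\mathcal{T})$, ordinary forces all $e_i$ equal, whence $\delta(q)=0$ for all $q$ and (*) holds with $\epsilon_q(i)=0$; conversely ordinary always gives $n_{\mathcal{M}(\mathcal{T})}\le 1$, so the equivalence is trivially true in that case.
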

\begin{proof}
Suppose that $\mathcal{M}(\mathcal{T})$ satisfies the condition (*), then for all $1 \leq q \leq r$,
\[p^{\lfloor q\lambda(\mathcal{T}) \rfloor+1}M \subset \varphi_{\mathcal{T}}^q(M) \subset p^{\lfloor q\lambda(\mathcal{T}) \rfloor}M.\]
Thus $\delta_{\mathcal{M}(\mathcal{T})}(q) \leq 1$ for all $q \geq 1$. By Lemma \ref{lemma:computeleveltorsion1}, we know that $\ell_{\mathcal{M}(\mathcal{T})} \leq 1$. If $\mathcal{M}(\mathcal{T})$ is ordinary, then $n_{\mathcal{M}(\mathcal{T})} \leq 1$; if $\mathcal{M}(\mathcal{T})$ is non-ordinary, then $n_{\mathcal{M}(\mathcal{T})} = \ell_{\mathcal{M}(\mathcal{T})} \leq 1$. Thus $\mathcal{M}(\mathcal{T})$ is minimal.

Suppose now that $\mathcal{M}({\mathcal{T}})$ is minimal. If $\mathcal{M}({\mathcal{T}})$ is ordinary, then there exists a $W(k)$-basis $\{w_i\}_{i=1}^r$ of $M$ such that $\varphi_{\mathcal{T}}(w_i) = p^{e_i}w_i$ for all $1 \leq i \leq r$. One can compute that in this case the level module $O = E$. Thus $\ell_{\mathcal{M}} = 0$. If $\mathcal{M}({\mathcal{T}})$ is non-ordinary, then $\ell_{\mathcal{M}({\mathcal{T}})} = n_{\mathcal{M}({\mathcal{T}})} \leq 1$. As $\mathcal{M}(\mathcal{T})$ is isoclinic, we know that $\delta_{\mathcal{M}(\mathcal{T})}(q) \leq 1$ for all $q \geq 1$. There are two cases to be considered depending on $\delta_{\mathcal{M}(\mathcal{T})}(q)$ being $0$ or $1$:

Case (1): If $\delta_{\mathcal{M}(\mathcal{T})}(q) = 0$, that is, $\alpha_{\mathcal{M}(\mathcal{T})}(q) = \beta_{\mathcal{M}(\mathcal{T})}(q)$, then they both equal to $q\lambda(\mathcal{T})$. Thus $\varphi_{\mathcal{T}}^q(M) = p^{q\lambda(\mathcal{T})}M$, which clearly satisfies condition (*).

Case (2): If $\delta_{\mathcal{M}(\mathcal{T})}(q) = 1$, that is, $\alpha_{\mathcal{M}(\mathcal{T})}(q)+1 = \beta_{\mathcal{M}(\mathcal{T})}(q)$, then $q\lambda(\mathcal{T}) \in (\alpha_{\mathcal{M}(\mathcal{T})}(q), \beta_{\mathcal{M}(\mathcal{T})}(q))$. Hence $\alpha_{\mathcal{M}(\mathcal{T})}(q) = \lfloor q\lambda(\mathcal{T}) \rfloor$ and $\beta_{\mathcal{M}(\mathcal{T})}(q) = \lfloor q\lambda(\mathcal{T}) \rfloor+1$. Again the condition (*) holds in this case.
\end{proof}

We can further decompose isoclinic minimal $F$-crystals into isosimple minimal $F$-crystals.

\begin{proposition} \label{proposition:minimalisosimpledecompose}
Every isoclinic minimal $F$-crystal is a direct sum of isosimple minimal $F$-crystals.
\end{proposition}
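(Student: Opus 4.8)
The plan is to reduce to the cyclic case, extract periodicity of the Hodge slopes from condition (*), and then split the crystal by an explicit change of basis. By the analysis preceding Proposition \ref{proposition:minimalisoclinic}, an isoclinic minimal $F$-crystal $\mathcal{M}$ is isomorphic to a direct sum $\bigoplus_{j\in J}\mathcal{M}_j(\mathcal{T}_j)$ of cyclic $F$-crystals, each of which is again minimal (by Lemma \ref{lemma:computeleveltorsion2}) and, being a direct summand of an isoclinic $F$-crystal, isoclinic with the same Newton slope. Hence it suffices to treat a cyclic minimal $\mathcal{M}(\mathcal{T})$, say $\varphi_{\mathcal{T}}(v_i)=p^{e_i}v_{i+1}$ for $i\in\mathbb{Z}/r$, with Newton slope $\lambda=s/r'$ in lowest terms; then $r'\mid r$, and I will show $\mathcal{M}(\mathcal{T})$ is isomorphic to $c'=r/r'$ copies of a cyclic $F$-crystal of rank $r'$, which will turn out to be isosimple and minimal.

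The first key step is periodicity. Applying condition (*) with $q=r'$ gives $\varphi_{\mathcal{T}}^{\,r'}(v_i)=p^{s+\epsilon_{r'}(i)}v_{i+r'}$ with $\epsilon_{r'}(i)\in\{0,1\}$, since $\lfloor r'\lambda\rfloor=s$. Summing the exponents over $i\in\mathbb{Z}/r$ gives $rs+\sum_i\epsilon_{r'}(i)$ on one side and $r'\sum_j e_j=r'(r\lambda)=rs$ on the other, so $\epsilon_{r'}(i)=0$ for all $i$; thus $e_i+e_{i+1}+\cdots+e_{i+r'-1}=s$ for every $i$, and subtracting consecutive relations yields $e_{i+r'}=e_i$ for all $i\in\mathbb{Z}/r$. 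Equivalently $\varphi_{\mathcal{T}}^{\,r'}(v_i)=p^{s}v_{i+r'}$, so the $\sigma^{r'}$-semilinear map $\psi:=p^{-s}\varphi_{\mathcal{T}}^{\,r'}$ commutes with $\varphi_{\mathcal{T}}$ and satisfies $\psi(v_i)=v_{i+r'}$.

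The second key step is to build a basis adapted to $\psi$. Since $k\supseteq\overline{\mathbb{F}}_p$, choose $\bar\zeta\in\mathbb{F}_{p^r}$ generating $\mathbb{F}_{p^r}$ over $\mathbb{F}_{p^{r'}}$ and let $\zeta\in W(\mathbb{F}_{p^r})\subseteq W(k)$ be its Teichm\"uller lift, so $\sigma^{r}(\zeta)=\zeta$. Set
\[
\hat v_j^{(l)}:=\sum_{m=0}^{c'-1}\sigma^{mr'+j}(\zeta^{\,l-1})\,v_{mr'+j},\qquad 0\le j\le r'-1,\ 1\le l\le c'.
\]
For fixed $j$ the vectors $\hat v_j^{(1)},\dots,\hat v_j^{(c')}$ lie in the rank-$c'$ coordinate block spanned by $\{v_{mr'+j}\}_{m}$, and the transition matrix to that coordinate basis reduces mod $p$ to the Vandermonde matrix in the $c'$ distinct $\mathrm{Gal}(\mathbb{F}_{p^r}/\mathbb{F}_{p^{r'}})$-conjugates of $\bar\zeta^{\,p^j}$, hence is invertible over $W(k)$; as distinct $j$ give disjoint coordinate blocks, $\{\hat v_j^{(l)}\}_{j,l}$ is a $W(k)$-basis of $M$. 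Using $e_{mr'+j}=e_j$ and $\sigma^{r}(\zeta)=\zeta$, a direct computation gives $\varphi_{\mathcal{T}}(\hat v_j^{(l)})=p^{e_j}\hat v_{j+1}^{(l)}$ for $0\le j\le r'-2$ and $\varphi_{\mathcal{T}}(\hat v_{r'-1}^{(l)})=p^{e_{r'-1}}\hat v_0^{(l)}$. Therefore $\mathcal{M}(\mathcal{T})=\bigoplus_{l=1}^{c'}\langle \hat v_0^{(l)},\dots,\hat v_{r'-1}^{(l)}\rangle_{W(k)}$ is a direct sum of $c'$ copies of the cyclic $F$-crystal $\mathcal{M}(\mathcal{T}')$ of rank $r'$ with Hodge-slope sequence $(e_0,\dots,e_{r'-1})$.

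Finally, $\mathcal{M}(\mathcal{T}')$ is isoclinic of Newton slope $s/r'$ and has rank $r'$ equal to the denominator of $s/r'$ in lowest terms, so $\mathcal{M}(\mathcal{T}')[1/p]$ is a simple $F$-isocrystal by Dieudonn\'e--Manin; thus $\mathcal{M}(\mathcal{T}')$ is isosimple. It is minimal because it is a direct summand of the minimal $F$-crystal $\mathcal{M}(\mathcal{T})$ (alternatively, restricting condition (*) for $\mathcal{M}(\mathcal{T})$ to indices $q,j\le r'$ shows $\mathcal{M}(\mathcal{T}')$ itself satisfies (*), whence Proposition \ref{proposition:minimalisoclinic} applies). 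This proves the proposition. The main obstacle I expect is purely computational: verifying that the $\hat v_j^{(l)}$ form a $W(k)$-basis and carrying out the Frobenius bookkeeping — this amounts to a concrete instance of \'etale descent (Lang's theorem) along $W(\mathbb{F}_{p^{r'}})\hookrightarrow W(k)$ — whereas the conceptual content is entirely the periodicity relation $e_{i+r'}=e_i$ coming from (*).
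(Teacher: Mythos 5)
Your argument is correct and follows essentially the same path as the paper: reduce to the cyclic case $\mathcal{M}(\mathcal{T})$, extract $r'$-periodicity of the Hodge slopes from condition (*) by summing the exponents over a full cycle, and split $M$ into $r/r'$ isosimple summands by a change of basis with coefficients in $W(\mathbb{F}_{p^r})$, arguing invertibility of the transition matrix via a determinant over $\mathbb{F}_{p^r}$. The only cosmetic difference is that you fix the coefficients to be powers $1,\zeta,\dots,\zeta^{c'-1}$ of a generator and invoke an ordinary Vandermonde determinant, whereas the paper allows an arbitrary $\mathbb{F}_{p^{r'}}$-basis of $\mathbb{F}_{p^r}$ and invokes the Moore determinant criterion for $\mathbb{F}_{p^{r'}}$-linear independence.
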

\begin{proof}
Let $\mathcal{M}$ be an isoclinic minimal $F$-crystal. By the argument before Proposition \ref{proposition:minimalisoclinic}, we can assume that $\mathcal{M} = \mathcal{M}(\mathcal{T})$ for some pair $\mathcal{T} = (\mathcal{B}, \eta)$ where $\mathcal{B} = \{v_1, v_2, \dots, v_r\}$ and $\eta = (e_1, e_2, \dots, e_r)$. Consider the Newton slope $\lambda (\mathcal{T})= \sum_{i=1}^r e_i / r = s/ r'$ where $(s,r')=1$. Let $n = r/r'$. To ease notation, we adopt the convention that the index $i$ in $e_i$ and $\epsilon_q(i)$ are taken modulo $r$.

We first show that the $r$-tuple $(e_1, e_2, \dots, e_r)$ is $r'$-periodic, that is, $e_i = e_{i+r'}$ for all $1 \leq i \leq r$. For each $1 \leq i \leq r$, let $d_i = \sum_{j=i}^{i+r'-1}e_j$. By Proposition \ref{proposition:minimalisoclinic}, we know that $d_i = \lfloor r'\lambda(\mathcal{T}) \rfloor + \epsilon_{r'}(i) =s+\epsilon_{r'}(i)$. As $\sum_{i=1}^r d_i = r'\sum_{i=1}^re_i = r'r\lambda(\mathcal{T}) = rs$, we get that $\epsilon_{r'}(i) = 0$ for all $1 \leq i \leq r$, that is, $d_1 = d_2 = \cdots = d_r$. This implies that $e_i = e_{i+r'}$ for all $1 \leq i \leq r$.

To show that $(M, \varphi_{\mathcal{T}})$ can be decomposed into a direct sum of isosimple minimal $F$-crystals, we first observe that for any $x \in W(\mathbb{F}_{p^r})\; \backslash \; pW(\mathbb{F}_{p^r})$, the following set
\[\mathcal{S}(x) = \{z_j(x) := \sum_{i=0}^{n-1} \sigma^{ir'+j-1}(x)v_{ir'+j} \; | \; j=1,2,\dots,r' \}\]
of the elements of $M$ satisfies the following two properties:
\begin{enumerate}
\item the set $\mathcal{S}(x)$ is linearly independent over $W(k)$;
\item for $1 \leq j \leq r'$, $\varphi(z_j(x)) = p^{e_j}z_{j+1}(x)$ and thus $\varphi^{r'}(z_j(x)) = p^sz_j(x)$.
\end{enumerate}
Therefore the set $\mathcal{S}(x)$ generates an isosimple $F$-crystal $\mathcal{M}_{\mathcal{S}(x)}$ of slope $s/r'$.

We now show that there exist $x_1, x_2, \dots, x_{n} \in W(\mathbb{F}_{p^r})\; \backslash \; pW(\mathbb{F}_{p^r})$ such that the set $\{z_j(x_i)\}_{i=1}^n$ will generate $W(k)$-submodules of $M$ that are also direct summand of $M$ for all $1 \leq j \leq r'$.

To see that we can find such $x_1, x_2, \dots, x_{n}$, it is enough to show that the determinant
\[
\mathrm{det}
\begin{pmatrix}
x_1 & \sigma^{r'}(x_1) & \sigma^{2r'}(x_1) & \cdots & \sigma^{(n-1)r'}(x_1) \\
x_2 & \sigma^{r'}(x_2) & \sigma^{2r'}(x_2) & \cdots & \sigma^{(n-1)r'}(x_2) \\
\vdots & \vdots & \vdots & \ddots & \vdots \\
x_{n'} & \sigma^{r'}(x_{n}) & \sigma^{2r'}(x_{n}) & \cdots & \sigma^{(n-1)r'}(x_{n}) \\
\end{pmatrix}
\]
is invertible in $W(k)$. Let $y_i$ be the image of $x_i$ under the projection map $W(\mathbb{F}_{p^r}) \to \mathbb{F}_{p^r}$. It suffices to show that the determinant
\[
\mathrm{det}
\begin{pmatrix}
y_1 & y_1^{p^{r'}} & y_1^{p^{2r'}} & \cdots & y_1^{p^{(n-1)r'}} \\
y_2 & y_2^{p^{r'}} & y_2^{p^{2r'}} & \cdots & y_2^{p^{(n-1)r'}} \\
\vdots & \vdots & \vdots & \ddots & \vdots \\
y_{n} & y_{n}^{p^{r'}} & y_{n}^{p^{2r'}} & \cdots & y_{n}^{p^{(n-1)r'}} \\
\end{pmatrix} \neq 0
\]
in $\mathbb{F}_{p^r}$. The determinant is non-zero if and only if $y_1, y_2, \dots, y_{n'} \in \mathbb{F}_{p^r}$ are linearly independent over $\mathbb{F}_{p^{r'}}$ (see \cite[Lemma 1.3.3]{GOSS}). We can make such choices as $\mathrm{dim}_{\mathbb{F}_{p^{r'}}}(\mathbb{F}_{p^r}) = n$ (but the choices are not natural). Hence the set $\bigcup_{i=1}^{n} S(x_i)$ is a $W(k)$-basis of $M$ and each subset $S(x_i)$ generates an isosimple $F$-crystal $\mathcal{M}_{\mathcal{S}(x_i)}$ of slope $s/r'$ that is also a direct summand of $M$. Therefore $\mathcal{M}(\mathcal{T})$ is a direct sum of isosimple $F$-crystals $\bigoplus_{i=1}^{r'} \mathcal{M}_{\mathcal{S}(x_i)}$. By Lemma \ref{lemma:computeleveltorsion2}, each direct summand $\mathcal{M}_{\mathcal{S}(x_i)}$ is minimal as well.
\end{proof}

Given a non-negative rational number $\lambda = s/r$ in reduced form, we can construct an $F$-crystal $(M_{\lambda}, \varphi_{\mathcal{T_{\lambda}}})$ with Newton slope $\lambda$ in the following way. Let $M_{\lambda}$ be a free $W(k)$-module of rank $r$ and $\mathcal{T}_{\lambda} = (\mathcal{B}, \eta_{\lambda})$ where $\mathcal{B} = \{v_1, v_2, \dots, v_r\}$ is a $W(k)$-basis of $M$ and $\eta_{\lambda} = (\lfloor \lambda \rfloor, \lfloor 2\lambda \rfloor - \lfloor \lambda \rfloor, \dots, \lfloor r\lambda \rfloor - \lfloor (r-1)\lambda \rfloor)$.

\begin{proposition} \label{proposition:isosimpleminimalexist}
The $F$-crystal $(M_{\lambda}, \varphi_{\mathcal{T}_{\lambda}})$ constructed above is an isosimple minimal $F$-crystal of Newton slope $\lambda$.
\end{proposition}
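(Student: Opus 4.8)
The plan is to verify the criterion of Proposition \ref{proposition:minimalisoclinic} directly for $\mathcal{T}_{\lambda}$, which immediately gives minimality, and then to upgrade ``minimal isoclinic'' to ``isosimple'' by a rank count against Dieudonn\'e--Manin's classification.

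First I would settle the numerics. Writing $e_i=\lfloor i\lambda\rfloor-\lfloor(i-1)\lambda\rfloor$ for the entries of $\eta_\lambda$, each $e_i$ is a non-negative integer because $t\mapsto\lfloor t\lambda\rfloor$ is non-decreasing for $\lambda\geq 0$; hence $(M_\lambda,\varphi_{\mathcal{T}_\lambda})$ is genuinely an $F$-crystal. Moreover $\sum_{i=1}^r e_i=\lfloor r\lambda\rfloor-\lfloor 0\rfloor=s$, so $\lambda(\mathcal{T}_\lambda)=s/r=\lambda$ is its Newton slope. I would also record the bookkeeping fact that, if one \emph{defines} $e_j:=\lfloor j\lambda\rfloor-\lfloor(j-1)\lambda\rfloor$ for all $j\in\mathbb{Z}$, then $e_{j+r}=e_j$ (since $\lfloor(j+r)\lambda\rfloor=\lfloor j\lambda\rfloor+s$), so this extension is compatible with the convention $v_{mr+i}=v_i$, and iterates of $\varphi_{\mathcal{T}_\lambda}$ can be computed by a telescoping sum of the $e_j$.

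The heart of the argument is checking condition (*). For $1\leq i\leq r$ and $1\leq q\leq r$ one has $\varphi_{\mathcal{T}_\lambda}^q(v_i)=p^{\,e_i+e_{i+1}+\cdots+e_{i+q-1}}\,v_{i+q}$, and by the telescoping just noted the exponent equals $\lfloor(i+q-1)\lambda\rfloor-\lfloor(i-1)\lambda\rfloor$. I would then apply the elementary inequality $\lfloor a\rfloor+\lfloor b\rfloor\leq\lfloor a+b\rfloor\leq\lfloor a\rfloor+\lfloor b\rfloor+1$ with $a=(i-1)\lambda$ and $b=q\lambda$, which yields $\lfloor(i+q-1)\lambda\rfloor-\lfloor(i-1)\lambda\rfloor=\lfloor q\lambda\rfloor+\epsilon_q(i)$ with $\epsilon_q(i)\in\{0,1\}$. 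Thus (*) holds, and Proposition \ref{proposition:minimalisoclinic} shows $(M_\lambda,\varphi_{\mathcal{T}_\lambda})$ is minimal.

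Finally I would establish isosimplicity. The case $q=r$ of the computation above gives $\varphi_{\mathcal{T}_\lambda}^r(v_i)=p^s v_i$ for all $i$, so $\varphi_{\mathcal{T}_\lambda}^r=p^s\sigma^r$ on $M_\lambda[1/p]$; in particular the $F$-isocrystal is isoclinic of slope $s/r$. Since $s/r$ is already in reduced form, the unique simple $F$-isocrystal of this slope has dimension $r$ by Dieudonn\'e--Manin, and as $\mathrm{rank}(M_\lambda)=r$ there is room for exactly one simple summand; hence $(M_\lambda,\varphi_{\mathcal{T}_\lambda})[1/p]$ is simple and $(M_\lambda,\varphi_{\mathcal{T}_\lambda})$ is isosimple. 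I do not anticipate a genuine obstacle: the only points needing care are keeping the cyclic index convention consistent inside the telescoping sum and correctly extracting the rank-$r$ simple isocrystal from Dieudonn\'e--Manin.
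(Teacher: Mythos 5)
Your argument is correct and essentially the same as the paper's: both verify condition (*) of Proposition \ref{proposition:minimalisoclinic} by computing $\varphi_{\mathcal{T}_\lambda}^q(v_i)$ and reducing $\epsilon_q(i)$ to the elementary inequality $\lfloor a\rfloor+\lfloor b\rfloor\leq\lfloor a+b\rfloor\leq\lfloor a\rfloor+\lfloor b\rfloor+1$, and both get isosimplicity from $\sum e_i=s$ plus a slope/rank count via Dieudonn\'e--Manin. Your use of the $r$-periodic extension of $e_j$ to express the exponent as a single telescoping sum $\lfloor(i+q-1)\lambda\rfloor-\lfloor(i-1)\lambda\rfloor$ is a slightly cleaner packaging of the paper's two-case split ($i+q\leq r$ versus $i+q>r$), but the content is identical.
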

\begin{proof}
As $\sum_{i=1}^r (\lfloor i\lambda \rfloor - \lfloor (i-1) \lambda \rfloor ) = r \lambda=s$, the $F$-isocrystal $(M_{\lambda}[1/p], \varphi_{\mathcal{T}_{\lambda}}[1/p])$ is simple with Newton slope $\lambda$. Hence it suffices to show that it is minimal.

By Proposition \ref{proposition:minimalisoclinic}, we have to show that for all $1 \leq i,q \leq r$, $\varphi_{\mathcal{T}_{\lambda}}^q(v_i) = p^{\lfloor q \lambda \rfloor + \epsilon_q(i)}v_{i+q}$ where $\epsilon_q(i) \in \{0,1\}$. If $i + q \leq r$, then 
\[\varphi_{\mathcal{T}_{\lambda}}^q(v_i) = p^{\lfloor (i+q-1)\lambda \rfloor - \lfloor (i-1)\lambda \rfloor}v_{i+q}.\]
If $i+q >r$, then 
\[\varphi_{\mathcal{T}_{\lambda}}^q(v_i) = p^{\lfloor (i+q-r-1)\lambda \rfloor + r \lambda - \lfloor (i-1)\lambda \rfloor}v_{i+q-r}.\]
Thus $\epsilon_q(i) = \lfloor (i-1)\lambda + q\lambda \rfloor - \lfloor (i-1)\lambda \rfloor - \lfloor q\lambda \rfloor \in \{0,1\}$ in both cases.
\end{proof}

\begin{proposition} \label{proposition:isosimpleunique}
For every rational number $\lambda \geq 0$, there exists a unique isosimple minimal $F$-crystal of Newton slope $\lambda$ up to isomorphism. 
\end{proposition}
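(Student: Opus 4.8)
Existence is already provided by Proposition \ref{proposition:isosimpleminimalexist}, so the plan is to prove uniqueness; fix $\lambda = s/r$ in lowest terms. The first step is to put an arbitrary isosimple minimal $F$-crystal $\mathcal{M}$ of Newton slope $\lambda$ into a concrete form. By the discussion preceding Proposition \ref{proposition:minimalisoclinic}, $\mathcal{M}$ is isomorphic to a direct sum $\bigoplus_{j \in J}\mathcal{M}_j(\mathcal{T}_j)$ in which each summand is itself minimal; since $\mathcal{M}[1/p]$ is simple it is indecomposable, so $J$ is a singleton and $\mathcal{M} \cong \mathcal{M}(\mathcal{T})$ for a single pair $\mathcal{T} = (\{v_1, \dots, v_r\}, (e_1, \dots, e_r))$. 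Because a simple $F$-isocrystal of slope $s/r$ has dimension $r$, we get $\mathrm{rank}(M) = r$ and $\sum_{i=1}^{r} e_i = s$, and Proposition \ref{proposition:minimalisoclinic} tells us that $(e_1, \dots, e_r)$ satisfies condition (*). So the question reduces to determining which sequences $(e_i)$ can occur.

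The combinatorial core is the claim that the only sequences of nonnegative integers $(e_1, \dots, e_r)$ with $\sum e_i = s$ satisfying (*) are the $r$ cyclic shifts of $\eta_\lambda = (\lfloor i\lambda\rfloor - \lfloor(i-1)\lambda\rfloor)_{i=1}^{r}$. To prove it I would extend $(e_i)$ to an $r$-periodic sequence over $\mathbb{Z}$, set $f_0 = 0$ and $f_i = f_{i-1} + e_i$ (so $f_{i+r} = f_i + s$), and consider $h_i := f_i - i\lambda$, which is $r$-periodic with $h_0 = 0$. Unwinding the exponents, condition (*) amounts to $f_b - f_a \in \{\lfloor(b-a)\lambda\rfloor, \lceil(b-a)\lambda\rceil\}$ for all $0 \le b - a \le r$, i.e.\ to $|h_b - h_a| < 1$ for all such $a, b$; by periodicity this means $\max_i h_i - \min_i h_i < 1$. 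On the other hand $h_i \equiv -is/r \pmod{\mathbb{Z}}$, so $\gcd(s,r) = 1$ forces $h_0, \dots, h_{r-1}$ to be pairwise distinct modulo $\mathbb{Z}$ with classes exhausting $\{0, 1/r, \dots, (r-1)/r\}$. A set of $r$ real numbers with these two properties must equal $\{m, m + \frac{1}{r}, \dots, m + \frac{r-1}{r}\}$ for $m = \min_i h_i$, and since $h_0 = 0$ lies in it there remain exactly $r$ possibilities for the function $i \mapsto h_i$, hence for $(f_i)$ and for $(e_i)$. On the other hand $\eta_\lambda$ is aperiodic (a proper period $d \mid r$ would force $r \mid sd$), so its $r$ cyclic shifts are pairwise distinct, and each of them satisfies (*) because it comes from relabelling the basis in the minimal $F$-crystal of Proposition \ref{proposition:isosimpleminimalexist}; comparing counts, the $r$ solutions of (*) are precisely these shifts.

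To finish, the basis change $v_i \mapsto v_{i+c}$ is an isomorphism of $F$-crystals sending $\mathcal{M}(\mathcal{T})$ with a cyclically shifted $\eta$ to $\mathcal{M}(\mathcal{T})$ with $\eta = \eta_\lambda$, so $\mathcal{M} \cong (M_\lambda, \varphi_{\mathcal{T}_\lambda})$ and uniqueness follows. The only step that is not pure bookkeeping is the combinatorial claim, which is in essence the uniqueness up to rotation of a balanced cyclic binary word of prescribed coprime density; that is where I expect the actual work to lie. Alternatively, uniqueness also drops out of the $F$-crystal analogue of Viehmann's theorem recalled in the introduction: the isogeny class of $\mathcal{M}$ is that of the simple $F$-isocrystal of slope $\lambda$, and every minimal $F$-crystal in one isogeny class is $\mathrm{GL}_r(W(k))$-$\sigma$-conjugate to the unique minimal element of the corresponding $\sigma$-conjugacy class, hence any two are isomorphic.
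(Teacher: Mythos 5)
Your argument is correct, and it is a genuinely different route from the paper's. The paper observes that $q=1$ in condition (*) forces each $e_i \in \{\lfloor\lambda\rfloor, \lceil\lambda\rceil\}$, factors $\varphi_{\mathcal T}=p^{\lfloor\lambda\rfloor}\varphi_{\mathcal T'}$ to reduce to the case of a minimal Dieudonn\'e module, and then invokes Vasiu's classification of minimal $p$-divisible groups (\cite[1.6 Main Theorem B]{Vasiu:reconstructing}) as a black box; uniqueness of minimal $p$-divisible groups within an isogeny class then transfers back. You instead classify directly, from condition (*), the admissible exponent sequences: passing to $h_i=f_i-i\lambda$ and using $r$-periodicity plus $\gcd(s,r)=1$, you show the only solutions are the $r$ cyclic shifts of $\eta_\lambda$, and that these are identified by the obvious basis rotation. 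This is a self-contained combinatorial argument (essentially the uniqueness up to rotation of a balanced cyclic word of coprime density) and avoids the appeal to the $p$-divisible group literature, at the cost of a few more lines of bookkeeping. One tiny point of care in your equivalence ``(*) $\Leftrightarrow |h_b-h_a|<1$'': for $q$ with $q\lambda\in\mathbb Z$ the two conditions are not literally the same, but since $\gcd(s,r)=1$ this happens only at $q=r$, where both reduce to $f_{a+r}-f_a=s$, so no harm is done; it is worth flagging explicitly. Your alternate remark that Viehmann's theorem (as recalled in the introduction) would also yield uniqueness is consistent with what the paper says there, though the paper's own proof of this proposition does not use it.
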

\begin{proof}
For every rational number $\lambda \geq 0$, the $F$-crystal $(M_{\lambda}, \varphi_{\mathcal{T}_{\lambda}})$ is minimal of Newton slope $\lambda$ by Proposition \ref{proposition:isosimpleminimalexist}. Hence it suffices to prove the uniqueness. 

Let $\mathcal{T} = (\mathcal{B},\eta)$ where $\mathcal{B} = \{w_1,w_2,\dots,w_r\}$ is a $W(k)$-basis of $M$ and $\eta = (e_1,e_2,\dots,e_r)$. Suppose that $\mathcal{M}(\mathcal{T}) = (M, \varphi_{\mathcal{T}})$ is an isosimple minimal $F$-crystal with Newton slope $\lambda = s/r$ in reduced form.

By Proposition \ref{proposition:minimalisoclinic}, for each $1 \leq i \leq r$, we know that $e_i$ is equal to $\lfloor \lambda \rfloor$ or $\lceil \lambda \rceil$. If $s = \lfloor \lambda \rfloor r+s'$ where $ 0 \leq s' < r$, then we have $(r-s')\lfloor \lambda \rfloor + s' \lceil \lambda \rceil = s$. We have $(M, \varphi_{\mathcal{T}}) = (M, p^{\lfloor \lambda \rfloor}\varphi_{\mathcal{T}'})$ where $\mathcal{T}' = (\mathcal{B},\eta')$, $\eta' = (e_1-\lfloor \lambda \rfloor, e_2 - \lfloor \lambda \rfloor, \dots, e_r - \lfloor \lambda \rfloor)$. It is clear that $(M, \varphi_{\mathcal{T}'})$ is an isosimple minimal Dieudonn\'e module. Thus we can assume that $(M, \varphi_{\mathcal{T}})$ is a minimal Dieudonn\'e module of a minimal $p$-divisible group to begin with.

By \cite[1.6 Main Theorem B]{Vasiu:reconstructing}, a $p$-divisible group $D$ is minimal if and only $n_D \leq 1$. Hence for each minimal Dieudonn\'e module $(M, \varphi_{\mathcal{T}})$ of a minimal $p$-divisible group with Newton polygon $\nu$, we know that $(M, \varphi_{\mathcal{T}})$ is isomorphic to the minimal Dieudonn\'e module of the minimal $p$-divisible group $H(\nu)$. This shows that minimal $p$-divisible groups exist uniquely within each isogeny class. Thus minimal $F$-crystals exist uniquely within each isogeny class. We complete the proof of the lemma.
\end{proof}

\begin{proposition} \label{proposition:directsumminimal}
If $\mathcal{M}_1$ and $\mathcal{M}_2$ are two minimal $F$-crystals over $k$, then $\mathcal{M}_1 \oplus \mathcal{M}_2$ is also minimal.
\end{proposition}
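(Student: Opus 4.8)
The plan is to combine the structural decomposition of minimal $F$-crystals developed earlier in this section with the isomorphism-number estimate recorded in Lemma \ref{lemma:computeleveltorsion2}; there is no serious obstacle, the work having already been done in the preceding propositions.

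First I would reduce to a direct sum of isoclinic pieces. By the discussion preceding Proposition \ref{proposition:minimalisoclinic}, a minimal $F$-crystal is isomorphic to a direct sum $\bigoplus_j \mathcal{M}_j(\mathcal{T}_j)$ in which each summand is minimal and, being built from a single cyclic permutation $\pi_j$ of length $r_j$, satisfies $\varphi_{\mathcal{T}_j}^{r_j} = p^{r_j \lambda(\mathcal{T}_j)}$ on each basis vector and is therefore isoclinic (one may, if desired, refine this further into isosimple minimal $F$-crystals by Proposition \ref{proposition:minimalisosimpledecompose}). Applying this to $\mathcal{M}_1$ and to $\mathcal{M}_2$ separately and taking the direct sum, we obtain
\[\mathcal{M}_1 \oplus \mathcal{M}_2 \cong \bigoplus_{j \in J} \mathcal{M}_j\]
where each $\mathcal{M}_j$ is an isoclinic minimal $F$-crystal. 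Since $\mathcal{M}_1$ and $\mathcal{M}_2$ each contribute at least one summand, $J$ has at least two elements (the degenerate case in which one of $\mathcal{M}_1,\mathcal{M}_2$ is trivial is immediate), so we are in the hypotheses of Lemma \ref{lemma:computeleveltorsion2}.

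Next I would apply the isomorphism-number bound of that lemma to this decomposition:
\[n_{\mathcal{M}_1 \oplus \mathcal{M}_2} \leq \max\{1,\, n_{\mathcal{M}_{j_1}},\, n_{\mathcal{M}_{j_1}} + n_{\mathcal{M}_{j_2}} - 1 \mid j_1, j_2 \in J,\ j_1 \neq j_2\}.\]
Each $\mathcal{M}_j$ is minimal, so $n_{\mathcal{M}_j} \leq 1$ for every $j \in J$; hence $n_{\mathcal{M}_{j_1}} \leq 1$ and $n_{\mathcal{M}_{j_1}} + n_{\mathcal{M}_{j_2}} - 1 \leq 1$, and the right-hand side is therefore at most $1$. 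Thus $n_{\mathcal{M}_1 \oplus \mathcal{M}_2} \leq 1$, which is exactly the condition for $\mathcal{M}_1 \oplus \mathcal{M}_2$ to be minimal.

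The main point to watch — and it is a minor bookkeeping matter rather than a genuine difficulty — is ensuring that the refined decomposition genuinely falls under Lemma \ref{lemma:computeleveltorsion2}: that it has at least two summands and that every summand is isoclinic. Both are guaranteed by the cycle decomposition used before Proposition \ref{proposition:minimalisoclinic}. (One could instead argue from the equality $\ell_{\mathcal{M}_1 \oplus \mathcal{M}_2} = \max\{\ell(j_1,j_2)\}$ in the same lemma, but that would require directly bounding the cross-terms $\ell(j_1,j_2)$ for $j_1 \neq j_2$, so the estimate on $n$ is the more economical route.)
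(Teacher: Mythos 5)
Your proof is correct and follows essentially the same route as the paper's: decompose each minimal $\mathcal{M}_i$ into isoclinic (or isosimple) minimal summands and then invoke the estimate $n_{\mathcal{M}} \leq \max\{1,\, n_{\mathcal{M}_{j_1}},\, n_{\mathcal{M}_{j_1}}+n_{\mathcal{M}_{j_2}}-1\}$ from Lemma \ref{lemma:computeleveltorsion2}. The paper's own proof is terser (a one-line appeal to Proposition \ref{proposition:minimalisosimpledecompose}, the definition, and Lemma \ref{lemma:computeleveltorsion2}); you have simply spelled out the same steps, including the check that the hypotheses of the lemma are met.
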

\begin{proof}
By Proposition \ref{proposition:minimalisosimpledecompose}, both $\mathcal{M}_1$ and $\mathcal{M}_2$ are direct sums of isoclinic minimal $F$-crystals. Based on this, the proposition follows from Definition \ref{definition:minimalfcrystal} and the Lemma \ref{lemma:computeleveltorsion2}.
\end{proof}

We give another description of isosimple minimal $F$-crystals which will be useful in the next section. Let $\lambda=s/r$ be in the reduced form. There exist $m \in \mathbb{Z}$, $n \in \mathbb{Z}_{\geq 0}$ such that $mr-ns=1$. Let $K_{\lambda}$ be the associative $B(\mathbb{F}_{p^{r}})$-algebra with identity generated by $\xi$ such that $\xi^r=p$ and $x\xi = \xi \sigma^{-n}(x)$ for every $x \in B(\mathbb{F}_{p^{r}})$. The definition of $K_{\lambda}$ is independent of the choices of $m$ and $n$. Note that $K_{\lambda} \cong K_{\lambda+l}$ for all $l \in \mathbb{Z}$ as associative $B(\mathbb{F}_{p^{r}})$-algebras. It is well-known that $K_{\lambda}$ is a central simple division algebra over $\mathbb{Q}_p$. For $\xi^i \otimes x \in K_{\lambda} \otimes_{B(\mathbb{F}_{p^{r}})} B(k)$, define $\varphi(\xi^i \otimes x) = \xi^{i+s} \otimes \sigma(x)$ and thus $(K_{\lambda} \otimes_{B(\mathbb{F}_{p^{r}})} B(k), \varphi)$ is an $F$-isocrystal. It is not hard to show that $(K_{\lambda} \otimes_{B(\mathbb{F}_{p^{r}})} B(k), \varphi)$ is isomorphic to the simple $F$-isocrystal $\mathcal{N}_{\lambda}$ of Newton slope $\lambda$ and that all of its endomorphisms are the left multiplication by elements of $K_{\lambda}$. Similarly, let $L_{\lambda}$ be the associative $W(\mathbb{F}_{p^{r}})$-algebra with identity generated by $\xi$ with $\xi^r = p$ and $x\xi = \xi \sigma^{-n}(x) $ for any $x \in W(\mathbb{F}_{p^{r}})$. Then $\mathcal{M}_{\lambda} :=  L_{\lambda} \otimes_{W(\mathbb{F}_{p^{r}})} W(k)$ is an $F$-crystal of rank $r$, where the Frobenius endomorphism is defined by $\varphi(\xi^i \otimes x) = \xi^{i+s} \otimes \sigma(x)$ for any $\xi^i \otimes x \in \mathcal{M}_{\lambda}$. It is clear that $\mathcal{M}_{\lambda}$ is isosimple with Newton slope $\lambda$. As $\mathcal{M}_{\lambda}[1/p]=(K_{\lambda} \otimes_{B(\mathbb{F}_{p^r})} B(k), \varphi)$, every endomorphism of $\mathcal{M}_{\lambda}$ is a left multiplication by some element of $L_{\lambda}$. Hence $L_{\lambda}$ is the endomorphism $\mathbb{Z}_p$-algebra of $\mathcal{M}_{\lambda}$.

\begin{proposition}
The $F$-crystal $\mathcal{M}_{\lambda}$ is isosimple minimal.
\end{proposition}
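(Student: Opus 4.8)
The plan is to identify $\mathcal{M}_{\lambda}$ explicitly with the $F$-crystal $(M_{\lambda}, \varphi_{\mathcal{T}_{\lambda}})$ constructed before Proposition~\ref{proposition:isosimpleminimalexist}, and then invoke that proposition together with Proposition~\ref{proposition:isosimpleunique}. Since $\mathcal{M}_{\lambda}$ is already known (from the paragraph preceding the statement) to be isosimple of Newton slope $\lambda$, and since Proposition~\ref{proposition:isosimpleunique} guarantees a \emph{unique} isosimple minimal $F$-crystal of Newton slope $\lambda$, it would in fact suffice to show directly that $\mathcal{M}_{\lambda}$ is minimal; but producing an explicit basis in the form of $\mathcal{T}_{\lambda}$ makes the verification cleanest.

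First I would exhibit a convenient $W(k)$-basis. Write $\lambda = s/r$ in reduced form and recall $L_{\lambda}$ is generated over $W(\mathbb{F}_{p^r})$ by $\xi$ with $\xi^r = p$; as a $W(\mathbb{F}_{p^r})$-module, $L_{\lambda}$ is free on $1, \xi, \xi^2, \dots, \xi^{r-1}$, so after tensoring, $\mathcal{M}_{\lambda} = L_{\lambda}\otimes_{W(\mathbb{F}_{p^r})} W(k)$ is free over $W(k)$ on $\{\xi^i \otimes 1\}_{i=0}^{r-1}$. Since $\gcd(s,r) = 1$, the map $i \mapsto si \bmod r$ permutes $\{0,1,\dots,r-1\}$, so the reordered set $\{\,\xi^{si \bmod r} \otimes 1\,\}_{i=0}^{r-1}$ is also a $W(k)$-basis. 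Setting $v_{i}$ (for $1 \le i \le r$) to be $\xi^{\,s(i-1) \bmod r}\otimes 1$ up to the evident power of $p$ needed to reduce the exponent modulo $r$, one computes from $\varphi(\xi^j \otimes x) = \xi^{j+s}\otimes \sigma(x)$ that $\varphi_{\lambda}$ sends $v_i$ to $p^{c_i}v_{i+1}$ cyclically, where the exponent $c_i$ is exactly the number of times the exponent $s(i-1) + s$ must be reduced by $r$, i.e. $c_i = \lfloor si/r \rfloor - \lfloor s(i-1)/r\rfloor = \lfloor i\lambda \rfloor - \lfloor (i-1)\lambda\rfloor$ for $1 \le i \le r-1$, and $c_r = s - \lfloor(r-1)\lambda\rfloor$. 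This is precisely the recipe $\eta_{\lambda}$ defining $\mathcal{T}_{\lambda}$.

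Hence $\mathcal{M}_{\lambda} \cong (M_{\lambda}, \varphi_{\mathcal{T}_{\lambda}})$, which is isosimple minimal of Newton slope $\lambda$ by Proposition~\ref{proposition:isosimpleminimalexist}; alternatively, it is the unique such $F$-crystal by Proposition~\ref{proposition:isosimpleunique}. The only point requiring care — the main obstacle, such as it is — is bookkeeping the powers of $p$ that arise when $\xi^j$ with $j \ge r$ is rewritten as $p^{\lfloor j/r\rfloor}\xi^{j \bmod r}$ across the cyclic relabeling; one must check that the running total of these reductions over a full cycle of length $r$ equals $s$ (so that $\varphi_{\lambda}^r = p^s$, consistent with Newton slope $\lambda$) and that each individual step contributes $\lfloor i\lambda\rfloor - \lfloor(i-1)\lambda\rfloor \in \{\lfloor\lambda\rfloor, \lceil\lambda\rceil\}$. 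Both are immediate telescoping identities, so no real difficulty arises.
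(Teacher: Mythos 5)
Your proof is correct, but it takes a more roundabout route than the paper's. The paper applies Proposition~\ref{proposition:minimalisoclinic} directly to the natural basis $\{\xi^i\otimes 1\}_{i=0}^{r-1}$: since $\varphi^q(\xi^i\otimes 1)=p^{\lfloor (i+qs)/r\rfloor}\,\xi^{(i+qs)\bmod r}\otimes 1$, the criterion reduces to the one-line observation that $\lfloor(i+qs)/r\rfloor-\lfloor qs/r\rfloor\in\{0,1\}$ for $0\le i\le r-1$. You instead reorder the basis by setting $v_i=\xi^{s(i-1)\bmod r}\otimes 1$ (which is legitimate because $i\mapsto si\bmod r$ is a permutation of $\mathbb{Z}/r\mathbb{Z}$ when $\gcd(s,r)=1$), compute that $\varphi(v_i)=p^{\lfloor i\lambda\rfloor-\lfloor(i-1)\lambda\rfloor}v_{i+1}$ cyclically, and conclude $\mathcal{M}_\lambda\cong(M_\lambda,\varphi_{\mathcal{T}_\lambda})$, then cite Proposition~\ref{proposition:isosimpleminimalexist}. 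Both paths ultimately rest on Proposition~\ref{proposition:minimalisoclinic}; yours is computationally heavier but has the side benefit of \emph{constructing} the isomorphism $\mathcal{M}_\lambda\cong(M_\lambda,\varphi_{\mathcal{T}_\lambda})$ rather than deducing it afterward from uniqueness, which is how the paper obtains it in the proof of Theorem~\ref{theorem:maintheorema} via Proposition~\ref{proposition:isosimpleunique}. The appeal to Proposition~\ref{proposition:isosimpleunique} in your last sentence is unnecessary: Proposition~\ref{proposition:isosimpleminimalexist} already gives isosimple minimality once the isomorphism is in hand, and uniqueness plays no role in establishing this particular statement.
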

\begin{proof}
By Proposition \ref{proposition:minimalisoclinic}, it suffices to check that for any $\xi^i \otimes 1\in \mathcal{M}_{\lambda}$, $0 \leq i \leq r-1$, and any $q \in \mathbb{Z}_{\geq 0}$,
\[\varphi^q(\xi^i \otimes 1) \in p^{\lfloor qs/r \rfloor + \epsilon_q(i)}\mathcal{M}_{\lambda} \quad \textrm{where} \quad \epsilon_q(i) \in \{0,1\}.\]
This is equivalent to check that $\lfloor (i+qs)/r \rfloor - \lfloor qs/r \rfloor \in \{0,1\}$, which is obvious.
\end{proof}

\begin{proposition} \label{proposition:minimalmaxorder}
An $F$-crystal $\mathcal{M}$ is minimal if and only if its endomorphism $\mathbb{Z}_p$-algebra $\mathrm{End}(\mathcal{M})$ is a maximal order of its endomorphism $\mathbb{Q}_p$-algebra $\mathrm{End}(\mathcal{M}[1/p])$.
\end{proposition}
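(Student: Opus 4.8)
My plan is to treat the two implications separately. For the ``only if'' direction, assume $\mathcal{M}$ is minimal, so by Theorem~\ref{theorem:maintheorema} we may write $\mathcal{M}\cong\bigoplus_{\lambda\in\Lambda}\mathcal{M}_\lambda^{m_\lambda}$. A simple $F$-isocrystal has no nonzero morphism to a simple $F$-isocrystal of a different Newton slope, so $\mathrm{Hom}(\mathcal{M}_\lambda,\mathcal{M}_{\lambda'})=0$ for $\lambda\ne\lambda'$ (this $\mathrm{Hom}$ injects into the corresponding $\mathrm{Hom}$ of $F$-isocrystals), whence $\mathrm{End}(\mathcal{M})\cong\prod_\lambda M_{m_\lambda}(\mathrm{End}(\mathcal{M}_\lambda))$ and $\mathrm{End}(\mathcal{M}[1/p])\cong\prod_\lambda M_{m_\lambda}(\mathrm{End}(\mathcal{M}_\lambda[1/p]))$. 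By the computations recorded just before the proposition, $\mathrm{End}(\mathcal{M}_\lambda)=L_\lambda$ and $\mathrm{End}(\mathcal{M}_\lambda[1/p])=K_\lambda$, and $L_\lambda$ is the valuation ring of the $\mathbb{Q}_p$-division algebra $K_\lambda$, hence its unique maximal order. Since a matrix ring over a maximal order is a maximal order and a finite product of maximal orders is maximal, $\mathrm{End}(\mathcal{M})$ is then a maximal order in $\mathrm{End}(\mathcal{M}[1/p])$.

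For the ``if'' direction, assume $\mathrm{End}(\mathcal{M})$ is a maximal order. The first step is to reduce to the isoclinic case: the primitive central idempotents $e_\lambda$ of the semisimple algebra $\mathrm{End}(\mathcal{M}[1/p])$ are central and integral over $\mathbb{Z}_p$, so $\mathrm{End}(\mathcal{M})[e_\lambda]$ is again a $\mathbb{Z}_p$-order and, by maximality, equals $\mathrm{End}(\mathcal{M})$; thus each $e_\lambda$ lies in $\mathrm{End}(\mathcal{M})$ and $\mathcal{M}=\bigoplus_\lambda e_\lambda\mathcal{M}$ splits as a direct sum of $\varphi$-stable isoclinic subcrystals, each with maximal endomorphism ring, since a corner ring of a maximal order at a central idempotent is maximal. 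By the direct-sum estimate of Lemma~\ref{lemma:computeleveltorsion2} it suffices to prove each $e_\lambda\mathcal{M}$ is minimal, so from now on $\mathcal{M}$ may be assumed isoclinic of Newton slope $\lambda=s/r$ in reduced form.

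Next let $\mathcal{M}'$ be the unique minimal $F$-crystal isogenous to $\mathcal{M}$ (Propositions~\ref{proposition:minimalisosimpledecompose} and~\ref{proposition:isosimpleunique}); by the first part $\mathrm{End}(\mathcal{M}')$ is maximal in $A:=\mathrm{End}(\mathcal{M}'[1/p])=\mathrm{End}(\mathcal{M}[1/p])$. All maximal orders of $A$ are conjugate under $A^\times$, and $A^\times$ consists of automorphisms of the $F$-isocrystal $\mathcal{M}[1/p]$, which commute with $\varphi$; so I may pick $u\in A^\times$ with $u\,\mathrm{End}(\mathcal{M}')\,u^{-1}=\mathrm{End}(\mathcal{M})$, whereupon $u\mathcal{M}'$ is a $\varphi$-stable lattice, isomorphic as an $F$-crystal to $\mathcal{M}'$ via multiplication by $u$ (hence still minimal), with the same endomorphism ring as $\mathcal{M}$. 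Replacing $\mathcal{M}'$ by $u\mathcal{M}'$, the problem becomes: two $\varphi$-stable $W(k)$-lattices $\mathcal{M},\mathcal{M}'$ in a common isoclinic $F$-isocrystal, sharing the same maximal endomorphism ring $\mathcal{O}$, are isomorphic as $F$-crystals. I would attack this by regarding $\mathcal{M},\mathcal{M}'$ as finitely generated modules over $R:=\mathcal{O}\otimes_{\mathbb{Z}_p}W(k)$ (the $\mathcal{O}$-action and the $W(k)$-structure commute), observing that $V:=\mathcal{M}[1/p]$ is a simple module over $R[1/p]\cong M_{mr}(B(k))$ by a dimension count (here $K_\lambda\otimes_{\mathbb{Q}_p}B(k)\cong M_r(B(k))$ because $K_\lambda\otimes B(k)$ acts faithfully on the $r$-dimensional $B(k)$-space $\mathcal{N}_\lambda$), and aiming to conclude $\mathcal{M}\cong\mathcal{M}'$ as $R$-modules. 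Once this is known, extending the isomorphism to $V$ gives an $R[1/p]$-automorphism of the simple module $V$, i.e.\ multiplication by a scalar $c\in B(k)^\times$; writing $c=p^{v(c)}u$ with $u\in W(k)^\times$ gives $\mathcal{M}'=c\mathcal{M}=p^{v(c)}\mathcal{M}$, and multiplication by $p^{v(c)}$ is the desired $F$-crystal isomorphism, so $\mathcal{M}$ is minimal.

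The step I expect to be the main obstacle is exactly proving $\mathcal{M}\cong\mathcal{M}'$ as $R$-modules, because $R$ is \emph{not} a maximal order: $L_\lambda\otimes_{\mathbb{Z}_p}W(k)$ is only an Iwahori-type (parahoric) order in $M_r(B(k))$, since the unramified subfield $B(\mathbb{F}_{p^r})$ of $K_\lambda$ becomes split over $B(k)$. Hence a simple $R[1/p]$-module can carry several isomorphism classes of $R$-lattice, and one must use the $\varphi$-structure---$\varphi$ is $R$-semilinear for the automorphism $\mathrm{id}_{\mathcal{O}}\otimes\sigma$, which acts on the building of $R$---to force $\mathcal{M}$ and $\mathcal{M}'$ into the same class. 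An alternative route I would pursue in parallel for this last step is to twist $\mathcal{M}$ by a power of $p$ so that $\lambda\in[0,1)$ and then invoke Oort's characterization of minimal $p$-divisible groups by their endomorphism rings together with $\mathrm{End}(D)=\mathrm{End}(\mathcal{DM}(D))$; the difficulty there is that such a twist is legitimate only after one knows every Hodge slope of $\mathcal{M}$ is at least $\lfloor\lambda\rfloor$, which must itself be deduced from the maximality of $\mathrm{End}(\mathcal{M})$.
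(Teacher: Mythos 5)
Your ``only if'' direction is essentially the paper's argument: decompose via Theorem~\ref{theorem:maintheorema}, identify $\mathrm{End}(\mathcal{M}_\lambda)=L_\lambda$ and $\mathrm{End}(\mathcal{M}_\lambda[1/p])=K_\lambda$, and invoke the standard facts that $L_\lambda$ is the unique maximal order in the division algebra $K_\lambda$ and that $M_m(-)$ and finite products preserve maximality. That half is fine.

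The ``if'' direction, however, is incomplete, and you yourself identify where it breaks. Your reduction to the isoclinic case via central idempotents is sound, and the conjugacy step is correct as stated (maximal orders in the central simple $\mathbb{Q}_p$-algebra $\mathrm{End}(\mathcal{M}[1/p])$ are conjugate, the conjugator $u\in A^\times$ commutes with $\varphi$, so $u\mathcal{M}'$ is again a minimal $F$-crystal with $\mathrm{End}(u\mathcal{M}')=\mathrm{End}(\mathcal{M})$). But this only reduces the problem to: two $\varphi$-stable lattices in a common isoclinic $F$-isocrystal, having the \emph{same} maximal endomorphism ring $\mathcal{O}$, are isomorphic as $F$-crystals. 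As you correctly observe, this is exactly where the work lies, and neither of your two suggested routes is carried out. The $R=\mathcal{O}\otimes_{\mathbb{Z}_p}W(k)$-module route stalls because $R$ is a non-maximal (Iwahori-type) order, so ``same endomorphism ring'' does not by itself single out a lattice class, and you do not explain how the semilinear $\varphi$-action picks one out. The Oort-twist route requires knowing \emph{a priori} that all Hodge slopes of $\mathcal{M}$ lie in $[\lfloor\lambda\rfloor,\lceil\lambda\rceil]$, which you also flag as unproved. So the proposal, as written, does not establish the backward implication.

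For comparison, the paper bypasses the lattice-comparison problem entirely. After reducing (via the matrix units of $M_m(L_\lambda)\cong\mathrm{End}(\mathcal{M})$) to $\mathrm{End}(\mathcal{M})=L_\lambda$ with $\lambda=s/r$ reduced, it works directly with the $W(\mathbb{F}_{p^r})$-module $\widetilde{M}=\{x\in M\mid \varphi^r(x)=p^s x\}$ on which the uniformizer $\xi\in L_\lambda$ acts. Choosing $\alpha\in\widetilde{M}\setminus\xi(M)$, one gets a $W(k)$-basis $\alpha,\xi\alpha,\dots,\xi^{r-1}\alpha$ of $M$; since $\varphi$ commutes with $\xi$ and preserves $\widetilde{M}$, a one-dimensionality argument on the graded pieces $\xi^i\widetilde{M}/\xi^{i+1}\widetilde{M}$ forces $\varphi(\alpha)=\epsilon\,\xi^s\alpha$ for some $\epsilon\in W(\mathbb{F}_{p^r})^\times$, and the relation $\varphi^r=p^s$ plus Hilbert 90 lets one normalize $\epsilon$ away, producing an explicit isomorphism $\mathcal{M}\cong\mathcal{M}_\lambda$. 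That direct construction is the missing ingredient in your write-up; until you supply something equivalent, the ``if'' direction is a genuine gap.
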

\begin{proof}
Suppose $\mathcal{M}$ is minimal, then it can be decomposed into a finite direct sum of isoclinic minimal $F$-crystals $\oplus_{i \in I} \mathcal{M}^{m_i}_{\lambda_i}$ where $\mathcal{M}_{\lambda_i}$ are the isosimple minimal $F$-crystals of Newton slope $\lambda_i$ and $m_i$ are the multiplicities. Hence $\mathrm{End}(\mathcal{M})$ is a product of matrix rings $M_{m_i}(L_{\lambda})$. Similarly the endomorphism $\mathbb{Q}_p$-algebra $\mathrm{End}(\mathcal{M}[1/p])$ is a product of matrix algebras  $M_{m_i}(K_{\lambda})$. As $L_{\lambda}$ is the maximal order of $K_{\lambda}$, we know that each $M_{m_i}(L_{\lambda})$ is a maximal order of $M_{m_i}(K_{\lambda})$ by \cite[Theorem 8.7]{MaxOrd}.

Suppose now $\mathrm{End}(\mathcal{M})$ is a maximal order of $\mathrm{End}(\mathcal{M}[1/p])$. As $\mathrm{End}(\mathcal{M}[1/p])$ is a product of matrix algebras $M_{m_i}(K_{\lambda})$, we know that $\mathrm{End}(\mathcal{M})$ is a product of matrix rings $M_{m_i}(L_{\lambda})$. There exist finite direct sum decomposition of $\mathcal{M}[1/p]  = \oplus_{i \in I} \mathcal{N}_i$ and $\mathcal{M}  = \oplus_{i \in I} \mathcal{M}_i$ so that $\mathcal{M}_i[1/p] = \mathcal{N}_i$, $\mathrm{End}(\mathcal{M}_i) \cong L_{\lambda}$ and $\mathrm{End}(\mathcal{N}_i[1/p]) \cong K_{\lambda}$. Hence we can assume that $\mathrm{End}(\mathcal{M}[1/p]) = K_{\lambda}$ and $\mathrm{End}(\mathcal{M}) = L_{\lambda}$ to begin with. We use the same representation of $L_{\lambda}$ as above. Write $\lambda = s/r$ in reduced form. Let $\widetilde{M} = \{x \in M \; | \; \varphi^r(x) = p^sx\}$ and $\widetilde{N_{\lambda}} = \{x \in N_{\lambda} \; | \; \varphi^r(x) = p^sx\}$ where $N_{\lambda} = M[1/p]$. Clearly $\widetilde{M} = \widetilde{N_{\lambda}} \cap M$. For any $x \in M$, $x \in \widetilde{M}$ if and only if $\xi(x) \in \widetilde{M}$ as $\xi$ is injective. Therefore $\widetilde{M} \not\subset \xi(M)$. Let $\alpha \in \widetilde{M} \; \backslash \; \xi(M)$. Then $\alpha, \xi(\alpha), \dots, \xi^{r-1}(\alpha)$ is a basis of $M$ over $W(k)$. As $\widetilde{M}/p\widetilde{M} \cong \widetilde{M}/\xi^r(\widetilde{M})$ is a $r$-dimensional vector space over $B(\mathbb{F}_{p^r})$ and $\widetilde{M}/\xi(\widetilde{M}) \cong \xi^i(\widetilde{M})/\xi^{i+1}(\widetilde{M})$ for $i \geq 1$, we know that $\xi^i(\widetilde{M})/\xi^{i+1}(\widetilde{M})$ has dimension one. Thus there exists $a \in \mathbb{Z}$ and $\epsilon \in W(\mathbb{F}_{p^r})^{\times}$ such that $\varphi(x) = \epsilon\xi^a(x)$. As $\varphi^r(x) = p^sx$, we get that $a = s$ and the norm $N_{W(\mathbb{F}_{p^r})/\mathbb{Z}_p}(\epsilon) = 1$. By Hilbert's 90, we can replace $x$ with $x'$ by a unit of $W(\mathbb{F}_{p^r})$ so that $\varphi(x') = \xi^s(x')$. One can show that the map $x' \mapsto \xi \otimes 1$ defines an isomorphism between $\mathcal{M}$ and $\mathcal{M}_{\lambda}$.
\end{proof}

Proposition \ref{proposition:minimalmaxorder} shows that our definition of minimal $F$-crystals is the right generalization of minimal $p$-divisible groups; see \cite[Section 1.1]{Oort:minimal}.

\begin{proof}[Proof of Theorem \ref{theorem:maintheorema}]
By the paragraph before Proposition \ref{proposition:minimalisoclinic} and Proposition \ref{proposition:minimalisosimpledecompose}, there is a finite direct sum decomposition of $\mathcal{M}(\nu)$ into isosimple minimal $F$-crystals $(M_{\lambda}, \varphi_{\mathcal{T}_{\lambda}})$ constructed before Proposition \ref{proposition:isosimpleminimalexist}. Because isosimple minimal $F$-crystals are unique up to isomorphism by Proposition \ref{proposition:isosimpleunique}, we see that $\mathcal{M}_{\lambda}$ is isomorphic to $(M_{\lambda}, \varphi_{\mathcal{T}_{\lambda}})$. The uniqueness up to isomorphism of $\mathcal{M}_{\lambda}$ within each isogeny class also follows from Proposition \ref{proposition:isosimpleunique}.
\end{proof}

\section{Valuations on $F$-crystals} \label{section:valuations}

In this section, we briefly recall some basic theory of valuations on $F$-crystals following \cite{Vasiu:traversosolved}. Then we use it to give another equivalent description of minimal $F$-crystals.

\begin{definition}
A valuation on a $W(k)$-module $M$ is a set function $w: M \to \mathbb{R} \cup \{ \infty \}$ such that the following two properties hold:
\begin{enumerate}
\item $w(ax) = \textrm{ord}_p(a) + w(x)$ for all $a \in W(k)$ and $x \in M$;
\item $w(x + y) \geq \textrm{min}\{w(x), w(y)\}$ for all $x, y \in M$. 
\end{enumerate}
\end{definition}

As $w(0) = w(p \cdot 0) = \mathrm{ord}_p(p) + w(0) = 1 + w(0)$, we have $w(0) = \infty$. Similarly, if $x \in M$ is a torsion element we have $w(x) = \infty$. This means that $w$ factors through $M / M_\textrm{tor}$ where $M_\textrm{tor}$ is the torsion submodule of $M$. A valuation is called \emph{non-degenerate} if $w(x) = \infty$ implies that $x = 0$. It is called \emph{non-trivial} if $w(x) \neq \infty$ for some $x \in M$. It is easy to see that a valuation on a $W(k)$-module $M$ extends uniquely to the $B(k)$-vector space $M[1/p]$.

\begin{definition}
An $F$-valuation of slope $\lambda \in \mathbb{R}$ on an $F$-crystal $(M, \varphi)$ is a valuation $w$ on $M$ such that $w(\varphi(x)) = w(x) + \lambda$ for all $x \in M$. It clearly extends to the $F$-isocrystal $(M[1/p], \varphi[1/p])$.
\end{definition}

Let $B(k)\{T, T^{-1}\}$ be the noncommutative Laurent polynomial ring and let $\mathbb{D} = B(k)\{T, T^{-1}\}/I$ where $I$ is the two-sided ideal generated by all elements of the form $Ta-\sigma(a)T$ for all $a \in B(k)$. For any $\sum_{i \in \mathbb{Z}} a_i T^i \in \mathbb{D}$, the following rule 
\[w_{\lambda}(\sum_{i \in \mathbb{Z}} a_i T^i) = \textrm{min}\{\textrm{ord}_p(a_i) + i\lambda \; | \; i \in \mathbb{Z} \}\]
defines an $F$-valuation of slope $\lambda$ on $\mathbb{D}$. Each $F$-isocrystal is a $\mathbb{D}$-module where $T$ acts as the Frobenius automorphism.

\begin{proposition} \label{prop:valuationoffbyconstant}
Let $\mathcal{N}$ be an $F$-isocrystal. There exists a non-degenerate $F$-valuation of slope $\lambda$ on $\mathcal{N}$ if and only if $\mathcal{N}$ is isoclinic of slope $\lambda$. If $\mathcal{N}$ is simple of slope $\lambda$, then any two non-trivial $F$-valuations of slope $\lambda$ on $\mathcal{N}$ differ by a constant.
\end{proposition}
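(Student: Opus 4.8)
The plan is to prove the two assertions in turn, relying on the Dieudonné–Manin classification. For the first equivalence, the ``if'' direction proceeds by reduction to a simple $F$-isocrystal: if $\mathcal{N}$ is isoclinic of slope $\lambda$, then by Dieudonné–Manin $\mathcal{N}$ is isomorphic to a finite direct sum of copies of the simple $F$-isocrystal $\mathcal{N}_\lambda$ of slope $\lambda$, so it suffices to build a non-degenerate $F$-valuation of slope $\lambda$ on $\mathcal{N}_\lambda$ and then take the minimum of the coordinate-wise valuations on a direct sum (the minimum of finitely many $F$-valuations of the same slope is again an $F$-valuation of that slope, and it is non-degenerate if each summand valuation is). On $\mathcal{N}_\lambda$ I would use the explicit model $\mathcal{N}_\lambda \cong (K_\lambda \otimes_{B(\mathbb{F}_{p^r})} B(k), \varphi)$ from the paragraph preceding Proposition \ref{proposition:isosimpleunique}: transport the $F$-valuation $w_\lambda$ on $\mathbb{D}$ through the identification of $\mathcal{N}_\lambda$ with a $\mathbb{D}$-module generated by a single element, or more directly define $w$ on $\xi^i \otimes x$ to be $\mathrm{ord}_p(x) + i/r$ and extend by the ultrametric inequality; one checks property (1) of a valuation, the ultrametric inequality, non-degeneracy (it vanishes only on $0$ since $\mathrm{ord}_p$ does on $B(k)$), and the $F$-equivariance $w(\varphi(\xi^i\otimes x)) = w(\xi^{i+s}\otimes \sigma(x)) = \mathrm{ord}_p(x) + (i+s)/r = w(\xi^i\otimes x) + \lambda$.

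For the ``only if'' direction I would argue contrapositively: if $\mathcal{N}$ is not isoclinic of slope $\lambda$, then by Dieudonné–Manin it has an isotypic summand of some slope $\mu \neq \lambda$, or a nonzero summand whose unique slope differs from $\lambda$; in any case there is a nonzero vector $x$ lying in an isoclinic subobject of slope $\mu \neq \lambda$, and comparing $w(\varphi^r(x)) = w(x) + r\lambda$ with the fact that $\varphi^{r'}(x) = p^{s'} x$ for suitable integers (coming from $\mu = s'/r'$) forces $w(x) = w(p^{s'} x)/\text{(factor)}$, which after rearranging gives $\mu\lambda$-type relations that are only consistent if $w(x) = \infty$. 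Concretely: $\varphi^{r'}(x) = p^{s'}x$ gives $w(x) + r'\lambda = s' + w(x)$, hence $r'\lambda = s' = r'\mu$, i.e. $\lambda = \mu$, a contradiction unless $w(x) = \infty$; so $w$ is degenerate. (One must handle slope-zero and positive-slope pieces uniformly; the relation $\varphi^{r'}(x) = p^{s'}x$ holds for any nonzero $x$ in a simple summand of slope $s'/r'$.)

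For the last sentence, suppose $\mathcal{N}$ is simple of slope $\lambda$ and $w_1, w_2$ are two non-trivial $F$-valuations of slope $\lambda$. Both extend to $\mathcal{N}$ as a $B(k)$-vector space. The key point is that $\mathrm{End}(\mathcal{N}) = K_\lambda$ is a division algebra, so $\mathcal{N}$ is ``one-dimensional'' over $K_\lambda$: pick any nonzero $v \in \mathcal{N}$, and then $v, \varphi(v), \dots, \varphi^{r-1}(v)$ span $\mathcal{N}$ over $B(k)$ (using simplicity and that $\varphi^r$ acts as $p^s$ up to the division-algebra structure). Set $c = w_1(v) - w_2(v) \in \mathbb{R}$ (finite because a non-trivial $F$-valuation of slope $\lambda$ on a simple object is automatically non-degenerate — this is essentially the ``only if'' direction applied internally, since a nonzero kernel would be a proper subobject). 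Using the $F$-equivariance, $w_1(\varphi^i(v)) - w_2(\varphi^i(v)) = c$ for every $i$, and using property (1) and the ultrametric inequality one shows $w_1(y) - w_2(y) = c$ for every nonzero $y$: write $y = \sum a_i \varphi^i(v)$, note $w_j(y) \geq \min_i(\mathrm{ord}_p(a_i) + w_j(\varphi^i(v)))$ with equality when the minimum is achieved uniquely, and a standard genericity/approximation argument over the (infinite, algebraically closed) residue field promotes the inequality to the identity $w_1 = w_2 + c$ everywhere. The main obstacle is exactly this last step — turning the ultrametric inequality into an equality on \emph{all} vectors — which is where one genuinely uses that $k$ is algebraically closed (so that the relevant ``leading term'' over the residue field is never forced to cancel) together with the completeness of $W(k)$ to pass from the graded picture to the actual valuation.
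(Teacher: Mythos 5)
The paper itself offers no proof here, only the citation to \cite[Lemma 5.3]{Vasiu:traversosolved}, so there is no in-paper argument to compare against; your outline is the one that must stand on its own. Its overall structure is sound: Dieudonn\'e--Manin to reduce to isotypic pieces, the explicit division-algebra model for the existence of a non-degenerate $F$-valuation, the observation that the kernel of a non-trivial $F$-valuation on a simple object is a $\varphi$-stable subspace and hence zero, and the reduction of uniqueness-up-to-constant to the $B(k)$-basis $\{v,\varphi(v),\dots,\varphi^{r-1}(v)\}$. One small misstatement in the ``only if'' direction: your parenthetical claim that $\varphi^{r'}(x) = p^{s'}x$ holds for \emph{any} nonzero $x$ in a slope-$s'/r'$ summand is false (take $x = ce_1$ with $c\notin B(\mathbb{F}_{p^{r'}})$ in the standard cyclic basis); what you need, and what is true, is that such an $x$ \emph{exists}, which is all your contradiction uses.

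The step you flag as the main obstacle --- promoting the ultrametric inequality to an equality for all $y$ --- actually requires no genericity argument, no appeal to algebraic closedness of $k$, and no completeness of $W(k)$. Write $\lambda = s/r$ in lowest terms. For $y = \sum_{i=0}^{r-1} a_i\varphi^i(v)$, the candidate values $\mathrm{ord}_p(a_i) + w_j(\varphi^i(v)) = \mathrm{ord}_p(a_i) + w_j(v) + i s/r$ differ pairwise by a quantity of the form $(\text{integer}) + (i_1-i_2)s/r$, and since $\gcd(s,r)=1$ and $0<|i_1-i_2|<r$ this is never an integer, hence never zero. So the minimum over $i$ is always achieved at a unique index $i_0$, which by the standard ultrametric fact forces $w_j(y) = \mathrm{ord}_p(a_{i_0}) + w_j(\varphi^{i_0}(v))$. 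Because $w_1(\varphi^i(v)) - w_2(\varphi^i(v)) = c$ is independent of $i$, the minimizing index $i_0$ is the same for $j=1$ and $j=2$, and $w_1(y) = w_2(y) + c$ follows immediately. With this clean observation substituted for the hand-waved ``genericity/approximation'' step, the argument closes.
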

\begin{proof}
See \cite[Lemma 5.3]{Vasiu:traversosolved}.
\end{proof}

\begin{proposition} \label{prop:valuationminimal}
Let $\mathcal{M}$ be an $F$-crystal. Let $\mathscr{W}$ be the set of all $F$-valuations $w$ of slope $\lambda$ on $\mathcal{M}[1/p]$ such that $w(x) \geq 0$ for all $x \in M$. Then $\mathscr{W}$ has a minimal element $w_0$, that is $w_0(x) \leq w(x)$ for all $x \in M[1/p]$ and for all $w \in \mathscr{W}$. The valuation $w_0$ is non-degenerate if and only if $\mathcal{M}$ is isoclinic of slope $\lambda$.
\end{proposition}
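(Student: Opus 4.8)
The plan is to realize $w_0$ as the pointwise infimum of all valuations in $\mathscr{W}$, and then to read off the non-degeneracy dichotomy directly from Proposition \ref{prop:valuationoffbyconstant}.

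First I would note that $\mathscr{W}$ is nonempty (it contains the trivial valuation $w \equiv \infty$) and define $w_0(x) := \inf\{w(x) \mid w \in \mathscr{W}\}$ for $x \in M[1/p]$. The defining condition $w(x) \ge 0$ for all $x \in M$ makes $w_0(x) \ge 0$ on $M$, and since any $x \in M[1/p]$ has $p^n x \in M$ for some $n \ge 0$ we get $w_0(x) \ge -n > -\infty$; hence $w_0$ is an honest $\mathbb{R} \cup \{\infty\}$-valued function. The valuation axioms survive the infimum: $w_0(ax) = \mathrm{ord}_p(a) + w_0(x)$ because $\mathrm{ord}_p(a)$ does not depend on $w$, and $w_0(x+y) \ge \min\{w_0(x), w_0(y)\}$ because $w(x+y) \ge \min\{w(x), w(y)\} \ge \min\{w_0(x), w_0(y)\}$ for every $w \in \mathscr{W}$. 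Similarly $w_0(\varphi(x)) = \inf_w (w(x)+\lambda) = w_0(x) + \lambda$. Thus $w_0$ is an $F$-valuation of slope $\lambda$ that is nonnegative on $M$, so $w_0 \in \mathscr{W}$, and it is the minimal element by construction.

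For the equivalence, the forward direction is immediate: if $w_0$ is non-degenerate, then it is a non-degenerate $F$-valuation of slope $\lambda$ on the $F$-isocrystal $\mathcal{M}[1/p]$, so Proposition \ref{prop:valuationoffbyconstant} forces $\mathcal{M}[1/p]$ --- hence $\mathcal{M}$ --- to be isoclinic of slope $\lambda$. Conversely, if $\mathcal{M}$ is isoclinic of slope $\lambda$, Proposition \ref{prop:valuationoffbyconstant} provides a non-degenerate $F$-valuation $w'$ of slope $\lambda$ on $\mathcal{M}[1/p]$. Since $M$ is finitely generated over $W(k)$, $w'$ is bounded below on $M$ by the minimum of its (finite) values on a nonzero generating set; adding the appropriate constant $c$ yields $w := w' + c$, which is still a non-degenerate $F$-valuation of slope $\lambda$ (all three properties are invariant under adding a constant) and is now nonnegative on $M$, so $w \in \mathscr{W}$. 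Then $w_0 \le w$ pointwise gives $w_0(x) \le w(x) < \infty$ for all $x \ne 0$, and combined with $w_0(x) > -\infty$ this shows $w_0$ is non-degenerate.

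I do not expect a serious obstacle here. The only slightly delicate points are verifying that the infimum does not run off to $-\infty$ (which is exactly what the hypothesis $w \ge 0$ on $M$ is for) and the constant-shift normalization that moves a non-degenerate valuation into $\mathscr{W}$. The substantive content --- that a non-degenerate $F$-valuation of slope $\lambda$ exists precisely when the object is isoclinic of slope $\lambda$ --- is already packaged in Proposition \ref{prop:valuationoffbyconstant}.
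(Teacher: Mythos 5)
Your proof is correct, and the infimum construction together with the constant-shift normalization is the natural way to establish this result. The paper itself does not spell out a proof but simply cites \cite[Lemma 5.4]{Vasiu:traversosolved}; your argument is a complete and self-contained version of what that reference does, with the two delicate points (the hypothesis $w\ge 0$ on $M$ preventing $\inf_w w(x)$ from being $-\infty$ since $p^nx\in M$ gives $w(x)\ge -n$, and the finite generating set of $M$ giving a uniform lower bound for the non-degenerate valuation from Proposition \ref{prop:valuationoffbyconstant} so it can be shifted into $\mathscr{W}$) correctly identified and handled.
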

\begin{proof}
See \cite[Lemma 5.4]{Vasiu:traversosolved}.
\end{proof}

For any $F$-valuation $w$ of slope $\lambda$ defined on an $F$-isocrystal $(N, \varphi)$, and for each real number $\alpha$, we denote
\[N^{w \geq \alpha} = \{x \in N \; | \; w(x) \geq \alpha\} \quad \textrm{and} \quad N^{w > \alpha} = \{x \in N \; | \; w(x) > \alpha\}.\]

\begin{theorem} \label{thm:anotherminimaldes}
Let $\mathcal{M}$ be an isoclinic $F$-crystal of Newton slope $\lambda$. Then the following three statements are equivalent.
\begin{enumerate}[(a)]
\item The $F$-crystal $\mathcal{M}$ is minimal.
\item For any $\Phi \in \mathbb{D}$ such that $w_{\lambda}(\Phi) \geq 0$, we have $\Phi(M) \subset M$.
\item For some $F$-valuation $w$ on $\mathcal{M}[1/p]$ of slope $\lambda$, we have $M = M[1/p]^{w \geq 0}$.
\end{enumerate}
\end{theorem}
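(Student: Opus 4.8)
The plan is to prove the cyclic chain of implications (a) $\Rightarrow$ (c) $\Rightarrow$ (b) $\Rightarrow$ (a). Essentially all of the content sits in the first implication, which produces an explicit $F$-valuation; the other two are short formal manipulations with $w_{\lambda}$ on $\mathbb{D}$ and the exponent functions $\alpha_{\mathcal{M}}, \beta_{\mathcal{M}}, \delta_{\mathcal{M}}$ of Section 2.

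For (a) $\Rightarrow$ (c): since $\mathcal{M}$ is isoclinic minimal of Newton slope $\lambda = s/r$ in reduced form, Theorem \ref{theorem:maintheorema} together with Propositions \ref{proposition:minimalisosimpledecompose} and \ref{proposition:isosimpleunique} shows that $\mathcal{M}$ is isomorphic to a finite direct sum of copies of the standard isosimple minimal $F$-crystal $\mathcal{M}_{\lambda} = L_{\lambda} \otimes_{W(\mathbb{F}_{p^r})} W(k)$. I would then write an $F$-valuation on $\mathcal{M}_{\lambda}$ down by hand: using the $W(k)$-basis $\{\xi^i \otimes 1\}_{i=0}^{r-1}$, set $w(\sum_{i=0}^{r-1} \xi^i \otimes c_i) = \min_i \{\mathrm{ord}_p(c_i) + i/r\}$ and check the three routine facts that (i) $w$ is a valuation; (ii) $w$ is an $F$-valuation of slope $\lambda$, using $\xi^r = p$ and $\varphi(\xi^i \otimes 1) = \xi^{i+s} \otimes 1 = p^{\lfloor (i+s)/r \rfloor}(\xi^{(i+s) \bmod r} \otimes 1)$, so that $w$ grows by exactly $(i+s)/r - i/r = \lambda$ on each basis vector (and the terms of $\varphi(x)$ fall in distinct basis components, so no cancellation occurs); and (iii) $M_{\lambda} = M_{\lambda}[1/p]^{w \geq 0}$, because $-i/r \in (-1, 0]$ for $0 \leq i \leq r-1$ forces $\mathrm{ord}_p(c_i) \geq 0$. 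On the direct sum one takes $w$ to be the minimum of these valuations over the summands; it still has slope $\lambda$ and still satisfies $M = M[1/p]^{w \geq 0}$, which is statement (c).

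For (c) $\Rightarrow$ (b): if $M = M[1/p]^{w \geq 0}$ for an $F$-valuation $w$ of slope $\lambda$ and $\Phi = \sum_i a_i T^i \in \mathbb{D}$ has $w_{\lambda}(\Phi) \geq 0$, then $\mathrm{ord}_p(a_i) + i\lambda \geq 0$ for every $i$, so for $x \in M$ each term satisfies $w(a_i \varphi^i(x)) = \mathrm{ord}_p(a_i) + w(x) + i\lambda \geq 0$, whence $w(\Phi(x)) \geq 0$ and $\Phi(x) \in M$. For (b) $\Rightarrow$ (a): for each integer $q > 0$ the elements $p^{-\lfloor q\lambda \rfloor} T^q$ and $p^{\lceil q\lambda \rceil} T^{-q}$ of $\mathbb{D}$ have $w_{\lambda}$-value $q\lambda - \lfloor q\lambda \rfloor \geq 0$ and $\lceil q\lambda \rceil - q\lambda \geq 0$ respectively, so applying (b) to them yields $\varphi^q(M) \subseteq p^{\lfloor q\lambda \rfloor} M$ and $p^{\lceil q\lambda \rceil} M \subseteq \varphi^q(M)$, i.e. $\alpha_{\mathcal{M}}(q) \geq \lfloor q\lambda \rfloor$ and $\beta_{\mathcal{M}}(q) \leq \lceil q\lambda \rceil$, hence $\delta_{\mathcal{M}}(q) \leq 1$ for all $q > 0$. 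By Lemma \ref{lemma:computeleveltorsion1} this gives $\ell_{\mathcal{M}} \leq 1$, and therefore $\mathcal{M}$ is minimal: directly if it is ordinary by \cite{Xiao:computing}, and via $n_{\mathcal{M}} = \ell_{\mathcal{M}} \leq 1$ by \cite[Theorem 1.2]{Xiao:vasiuconjecture} otherwise.

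The main obstacle is point (iii) in (a) $\Rightarrow$ (c): one has to know that the cut-off lattice $M_{\lambda}[1/p]^{w \geq 0}$ is \emph{exactly} $M_{\lambda}$ rather than something larger, and this hinges on choosing a $W(k)$-basis of $\mathcal{M}_{\lambda}$ that is ``slope-adapted'' in the sense that the valuations $i/r$ of the basis vectors all lie in $[0,1)$; the basis $\{\xi^i \otimes 1\}$ coming from the division-algebra description of $\mathcal{M}_{\lambda}$ has this property. Once the correct model of $\mathcal{M}_{\lambda}$ is pinned down, the remaining two implications are purely formal.
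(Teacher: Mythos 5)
Your proposal is correct, and it takes a genuinely different route from the paper in two of the three implications. The paper proves the cycle (b) $\Rightarrow$ (a) $\Rightarrow$ (c) $\Rightarrow$ (b), and its (b) $\Rightarrow$ (a) step is the laborious one: from (b) it first builds an explicit basis $\Upsilon = (\Phi^i x_j)$ of $M$ (where $\Phi = T^a p^b$ has $w_\lambda(\Phi) = 1/r$ and $\Phi_0 = T^r p^{-s}$ fixes the $x_j$), uses this to decompose $\mathcal{M}$ into isosimple summands, and then invokes a separate isosimple lemma (Lemma~\ref{lemma:anotherminimaldes}) together with Proposition~\ref{proposition:directsumminimal}. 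You instead prove (b) $\Rightarrow$ (a) directly: applying (b) to $p^{-\lfloor q\lambda\rfloor}T^q$ and $p^{\lceil q\lambda\rceil}T^{-q}$ bounds $\alpha_{\mathcal{M}}(q)$ and $\beta_{\mathcal{M}}(q)$, gives $\delta_{\mathcal{M}}(q)\le 1$ for all $q$, and then Lemma~\ref{lemma:computeleveltorsion1} plus the identity $n_{\mathcal{M}}=\ell_{\mathcal{M}}$ finishes. This is noticeably shorter and bypasses the basis construction entirely. Likewise, for (a) $\Rightarrow$ (c) the paper routes through Lemma~\ref{lemma:anotherminimaldes}, whose isosimple case argues abstractly via Proposition~\ref{proposition:minimalmaxorder} and the uniqueness of the valuation on $\mathrm{End}(\mathcal{M}[1/p])$; you instead invoke Theorem~\ref{theorem:maintheorema} to reduce to $\mathcal{M}_\lambda^{\oplus m}$ and write the $F$-valuation on $\mathcal{M}_\lambda$ down by hand in the $\{\xi^i\otimes 1\}$ basis. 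Your observation that $\gcd(s,r)=1$ makes $\varphi$ permute the basis components (so no cancellation affects the $\min$) and that the basis valuations $i/r$ lie in $[0,1)$ (so the cutoff lattice is exactly $M_\lambda$) is exactly what makes this explicit approach go through; it is the same phenomenon the paper exploits later in Lemma~\ref{lemma:valuationonisocrystal}. Your (c) $\Rightarrow$ (b) matches the paper verbatim. The trade-off is that your proof is more self-contained and computational, while the paper's route isolates a reusable isosimple lemma; both are sound, and since Theorem~\ref{theorem:maintheorema} is proved in the paper before this theorem, there is no circularity in your use of it.
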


Theorem \ref{thm:anotherminimaldes} is the generalization of \cite[Proposition 5.17]{Vasiu:traversosolved} to $F$-crystals. We first prove a lemma.

\begin{lemma} \label{lemma:anotherminimaldes}
If $\mathcal{M}$ is isosimple, then statements (a) and (c) in Theorem \ref{thm:anotherminimaldes} are equivalent.
\end{lemma}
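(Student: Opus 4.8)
The plan is to prove the equivalence of (a) and (c) for an isosimple $F$-crystal $\mathcal{M} = (M,\varphi)$ of Newton slope $\lambda = s/r$ in reduced form by reducing to the already-established structure theory. First I would recall that by Proposition \ref{proposition:isosimpleunique} there is a unique isosimple minimal $F$-crystal of Newton slope $\lambda$, which by Proposition \ref{proposition:isosimpleminimalexist} and Theorem \ref{theorem:maintheorema} can be taken to be $(M_\lambda, \varphi_{\mathcal{T}_\lambda})$ with its explicit basis $\{v_1,\dots,v_r\}$ satisfying $\varphi_{\mathcal{T}_\lambda}(v_i) = p^{\lfloor i\lambda\rfloor - \lfloor(i-1)\lambda\rfloor}v_{i+1}$. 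So statement (a) says precisely that $\mathcal{M} \cong (M_\lambda,\varphi_{\mathcal{T}_\lambda})$.

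For the direction (a) $\Rightarrow$ (c): assuming $\mathcal{M}$ is minimal, I would work with the explicit model $(M_\lambda,\varphi_{\mathcal{T}_\lambda})$ and define a candidate $F$-valuation $w$ on $M_\lambda[1/p]$ of slope $\lambda$ by setting $w(v_i) = \lfloor (i-1)\lambda\rfloor - \lfloor(i-1)\lambda\rfloor\cdots$, more precisely $w\big(\sum_i a_i v_i\big) = \min_i\{\mathrm{ord}_p(a_i) + c_i\}$ for a suitable sequence of real constants $c_i$ chosen so that $w(\varphi(x)) = w(x) + \lambda$; concretely $c_i$ should be taken so that $c_{i+1} = c_i + \lambda - (\lfloor i\lambda\rfloor - \lfloor (i-1)\lambda\rfloor)$, e.g. $c_i = (i-1)\lambda - \lfloor (i-1)\lambda\rfloor \in [0,1)$. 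One checks this is a well-defined $F$-valuation of slope $\lambda$ by Proposition \ref{prop:valuationoffbyconstant} (it restricts to the valuation on the simple $F$-isocrystal $\mathcal{M}[1/p]$ up to a constant), that $w(x) \geq 0$ for all $x \in M_\lambda$ since $c_i \geq 0$, and that conversely any element of $M_\lambda[1/p]$ with $w \geq 0$ lies in $M_\lambda$ because $c_i < 1$ forces $\mathrm{ord}_p(a_i) \geq 0$. Hence $M_\lambda = M_\lambda[1/p]^{w \geq 0}$, giving (c).

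For the direction (c) $\Rightarrow$ (a): suppose $M = M[1/p]^{w\geq 0}$ for some $F$-valuation $w$ of slope $\lambda$ on the simple $F$-isocrystal $N := \mathcal{M}[1/p]$. By Proposition \ref{prop:valuationoffbyconstant}, $w$ is, up to an additive constant $\gamma$, the canonical $F$-valuation on $N$; and the lattice $M_\lambda$ of the minimal model is exactly $N^{w_0 \geq 0}$ for the canonical $w_0$ with the correct normalization — so after possibly rescaling by a power of $p$ (which does not change the isomorphism class of the $F$-crystal), the constant $\gamma$ can be adjusted so that $M = N^{w \geq 0}$ coincides with the minimal lattice. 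The cleanest way to package this: the image of $w$ on $N \setminus \{0\}$ is a coset of $\mathbb{Z}$ plus the finite set $\{c_1,\dots,c_r\}$ of fractional parts appearing above (since $N$ is $r$-dimensional with cyclic $\varphi$-action), and the condition $M = N^{w\geq 0}$ pins down $M$ uniquely once $\gamma$ is normalized; then one exhibits the isomorphism $M \cong M_\lambda$ by matching up the basis given by the minimal-valuation vectors, checking it intertwines the Frobenii. This realizes $\mathcal{M} \cong (M_\lambda,\varphi_{\mathcal{T}_\lambda})$, i.e. (a).

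The main obstacle I expect is the bookkeeping in the (c) $\Rightarrow$ (a) direction: translating "the valuation has image a prescribed coset" into "the lattice is the minimal one" requires care about the normalization constant and about the fact that two $F$-valuations on a simple $F$-isocrystal differing by a constant can give genuinely different lattices, only one of which (up to $p$-power rescaling) is the minimal crystal. I would handle this by invoking Proposition \ref{prop:valuationminimal} to identify the minimal element $w_0$ of the relevant set of valuations and arguing that $M = N^{w\geq 0}$ with $w$ non-trivial forces $w$ to be (a shift of) $w_0$, since any strictly smaller valuation would produce a strictly larger lattice, contradicting that $M$ is a lattice in $N$ with $\varphi(M)\subseteq M$. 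Everything else — the verification that the candidate $w$ in (a) $\Rightarrow$ (c) is a valuation, and the explicit isomorphism at the end — is routine given the explicit formula for $\varphi_{\mathcal{T}_\lambda}$ in Theorem \ref{theorem:maintheorema}.
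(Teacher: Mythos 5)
Your argument takes a genuinely different route from the paper's. The paper proves both directions through the maximal-order criterion of Proposition~\ref{proposition:minimalmaxorder}, whereas you reduce to the explicit model $(M_\lambda,\varphi_{\mathcal{T}_\lambda})$. Your (a)~$\Rightarrow$~(c) direction is sound: setting $c_i=(i-1)\lambda-\lfloor(i-1)\lambda\rfloor\in[0,1)$ and $w\bigl(\sum_i a_i v_i\bigr)=\min_i\{\mathrm{ord}_p(a_i)+c_i\}$ does give an $F$-valuation of slope $\lambda$ with $M_\lambda=M_\lambda[1/p]^{w\geq 0}$, and Proposition~\ref{proposition:isosimpleunique} lets you transport this to an arbitrary isosimple minimal $\mathcal{M}$. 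This is a reasonable alternative to the paper's argument, which instead picks an abstract $w$ with $\mathbb{Z}\subset w(\mathcal{M}[1/p])$, shows the image is exactly $(1/r)\mathbb{Z}$, and uses that a uniformizer $\pi$ of $\mathrm{End}(\mathcal{M}[1/p])$ stabilizes $M$ because $\mathrm{End}(\mathcal{M})$ is a maximal order.

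The (c)~$\Rightarrow$~(a) direction as written has a real gap. You assert that "after possibly rescaling by a power of $p$" the lattice $M=N^{w\geq 0}$ can be made to coincide with the minimal model. But the constant $\gamma$ by which $w$ differs from the reference valuation $w_0$ lives in $(1/r)\mathbb{Z}$, and multiplying by a power of $p$ only shifts by integers, so it cannot absorb an arbitrary $\gamma$. What is actually needed is to act by a power of the uniformizer $\pi\in\mathrm{End}(N)\cong K_\lambda$, which has $v(\pi)=1/r$, commutes with $\varphi$, and therefore gives an isomorphism of $F$-crystals carrying $N^{w_0\geq 0}$ to $N^{w_0\geq -n/r}$. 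Your fallback in the last paragraph also does not close the gap: a strictly smaller valuation yields a strictly larger $\varphi$-stable lattice in $N$, which is still a perfectly good $F$-crystal, so there is no contradiction with "$M$ is a lattice in $N$ with $\varphi(M)\subseteq M$." The paper's treatment of this direction is both shorter and avoids the normalization issue altogether: from $M=M[1/p]^{w\geq 0}$ one checks directly that $\mathrm{End}(\mathcal{M})=\{f\colon v(f)\geq 0\}$ (if $v(f)<0$, pick $x\in M$ with $w(x)$ small enough that $w(f(x))=w(x)+v(f)<0$, so $f\notin\mathrm{End}(\mathcal{M})$), hence $\mathrm{End}(\mathcal{M})$ is the maximal order and Proposition~\ref{proposition:minimalmaxorder} applies.
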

\begin{proof}
Let $w$ be an $F$-valuation of slope $\lambda$ on $\mathcal{M}[1/p]$. For $f \in \mathrm{End}(\mathcal{M}[1/p])$, define an $F$-valuation $w'$ of slope $\lambda$ on $\mathcal{M}[1/p]$ by $w'(x) = w(fx)$. By Proposition \ref{prop:valuationoffbyconstant}, there exists $v(f) \in \mathbb{R} \cup \{\infty\}$ such that $w(fx) = w(x) + v(f)$ for all $x \in \mathcal{M}[1/p]$. Then $v$ is the unique valuation on $\textrm{End}(\mathcal{M}[1/p])$ that extends the $p$-adic valuation on $\mathbb{Q}_p$. The maximal order of $\mathrm{End}(\mathcal{M}[1/p])$ is $\mathrm{End}(\mathcal{M}[1/p])_0 = \{f \in \textrm{End}(\mathcal{M}[1/p]) \; | \; v(f) \geq 0\}$.

Suppose $M = M[1/p]^{w \geq 0}$ for some $F$-valuation $w$ of slope $\lambda$, then $\mathrm{End}(\mathcal{M})$ consists of those endomorphisms $f$ such that $v(f) \geq 0$. Suppose that there exists $f \in \mathrm{End}(\mathcal{M})$ such that $v(f)<0$, then there exists $x \in M$ with $w(x) = 0$ so that $w(f(x)) = w(x)+v(f) = v(f) <0$. Thus $f(x) \notin M$ and $f \notin \mathrm{End}(\mathcal{M})$. Therefore $\mathrm{End}(\mathcal{M})$ is exactly the maximal order of $\mathrm{End}(\mathcal{M}[1/p])$. By Proposition \ref{proposition:minimalmaxorder}, we know that $\mathcal{M}$ is minimal.

Suppose $\mathcal{M}$ is minimal of rank $r$. Let $\lambda = s/r$ be in reduced form and let $a \in \mathbb{Z}_{>0}$ and $b \in \mathbb{Z}$ be two integers such that $as+br=1$. We can choose an $F$-valuation $w$ of slope $\lambda$ on $\mathcal{M}[1/p]$ such that $\mathbb{Z} \subset w(\mathcal{M}[1/p])$. For any $n \in \mathbb{Z}$, let $x \in \mathcal{M}[1/p]$ be such that $w(x) = bn$. Then the $F$-valuation of $\varphi^{na}(x)$ is $n/r$ as follows:
\[w(\varphi^{na}(x)) = w(x)+na\lambda = bn+na\lambda = n/r.\]
Therefore $(1/r)\mathbb{Z} \subset w(\mathcal{M}[1/p])$. Since we can define an $F$-valuation $w'$ on $\mathcal{M}[1/p]$ such that $w'(\mathcal{M}[1/p]) = (1/r)\mathbb{Z}$, by the second half of Proposition \ref{prop:valuationoffbyconstant}, we know that $w(\mathcal{M}[1/p]) = (1/r)\mathbb{Z}$. Let $\pi$ be a generator of the maximal ideal of $\textrm{End}(\mathcal{M}[1/p])_0$, we have $v(\pi) = 1/r$. As $\pi(M) \subset M$, $w(M)$ is of the form $\{i/r \; | \; i \in \mathbb{Z}, i \geq i_0\}$ for some $i_0$. By replacing $w$ with $w - i_0/r$, we can assume that $i_0=0$ and hence $M = M[1/p]^{w \geq 0}$ for some $w$.
\end{proof}

\begin{proof}[Proof of Theorem \ref{thm:anotherminimaldes}]
Let $\lambda = s/r$ be the reduced form and $m$ be the multiplicity of $\mathcal{M}[1/p]$, that is, $m = \textrm{dim}_{B(k)}(M[1/p])/r$. Let $\Phi_0 = T^rp^{-s} \in \mathbb{D}$ and choose $\Phi = T^ap^b$ such that $w_{\lambda}(\Phi) = 1/r$. The element $\Phi$ is unique up to multiplication by an integral power of $\Phi_0$. 

Suppose the statement (b) is true, we show that there exist a $W(k)$-basis of $M$ of the form $\Upsilon = (\Phi^ix_j)_{0 \leq i < r, 1 \leq j \leq m}$ such that each $x_j \in M$ satisfies the equation $\Phi_0 (x_j) = x_j$. Indeed, let $\widetilde{M} = \{x \in M \; | \; \Phi_0(x) = x\}$. As $w_{\lambda}(\Phi_0) = 0$ and $\Phi_0$ preserves $M$ by (b), we know that $M \cong \widetilde{M} \otimes_{W(\mathbb{F}_{p^r})} W(k)$. As $\Phi^r : \widetilde{M} \to \widetilde{M}$ is multiplication by $p$, the quotient $\widetilde{M}/\Phi(\widetilde{M})$ is a $\mathbb{F}_{p^r}$-vector space of dimension $m$. By taking $\{x_1, x_2, \dots, x_m\} \in \widetilde{M}$ so that $\{\bar{x}_1, \bar{x}_2, \dots, \bar{x}_m\}$ is basis of $\widetilde{M}/\Phi(\widetilde{M})$, then the projection of the set $\{\Phi^i(x_1), \Phi^i(x_2), \dots, \Phi^i(x_m)\}$ is a basis of $\Phi^i(\widetilde{M})/\Phi^{i+1}(\widetilde{M})$. Therefore $\Upsilon$ is a basis of $\widetilde{M}$ and thus a basis of $M$.

We prove (b) $\Rightarrow$ (a). Let $\Upsilon$ be as above. Let $M_j = \mathbb{D}^{w_{\lambda} \geq 0}x_j$ for $1 \leq j \leq m$ and $\mathcal{M}_j = (M_j, \varphi)$. Then there is a decomposition of $\mathcal{M} = \mathcal{M}_1 \oplus \mathcal{M}_2 \oplus \cdots \oplus \mathcal{M}_m$ such that $\mathcal{M}_1 \cong \mathcal{M}_2 \cong \cdots \cong \mathcal{M}_m$. It suffices to show that $\mathcal{M}_1$ is minimal by Proposition \ref{proposition:directsumminimal}. Define an $F$-valuation $w$ of slope $\lambda$ on $M_1[1/p]$ as follows:
\[w(\sum_i a_i \Phi^i(x_1)) = \textrm{min}_i \{ \textrm{ord}_p(a_i) + i/r\}.\]
Hence $M_1 = M_1[1/p]^{w \geq 0}$. By Lemma \ref{lemma:anotherminimaldes}, $\mathcal{M}_1$ is minimal.

We prove (a) $\Rightarrow$ (c). Let $\mathcal{M} = \mathcal{M}_1 \oplus \mathcal{M}_2 \oplus \cdots \oplus \mathcal{M}_m$ be a decomposition of minimal isosimple $F$-crystals. By Lemma \ref{lemma:anotherminimaldes}, for each $1 \leq i \leq m$, there exists $w_i$ such that $M_i = M_i[1/p]^{w_i \geq 0}$. Define an $F$-valuation $w$ of slope $\lambda$ on $M$ such that $w(x) = \textrm{min}_i\{w_i(x_i) \; | \; x_i \; \textrm{is the projection of $x$ to $M_i$}\}$. Hence $M = M[1/p]^{w \geq 0}$.

We prove (c) $\Rightarrow$ (b). Let $\Phi = \sum_i  a_i T^i \in \mathbb{D}$ such that $w_{\lambda}(\Phi) \geq 0$. Then $\textrm{ord}_p(a_i) + i\lambda \geq 0$ for all $i$. For any $x \in M$, that is, $w(x) \geq 0$, we want to show that $w(\Phi(x)) \geq 0$. As $w(a_i\varphi^i(x)) = \textrm{ord}_p(a_i)+i\lambda + w(x) \geq 0$, we have $w(\Phi(x)) \geq 0$.
\end{proof}

The next proposition is proved in the case of Dieudonn\'e modules in \cite[Lemma 4.2]{Yu:finiteendabvar}.

\begin{proposition} \label{prop:minimalminimal}
Let $(M, \varphi)$ be an $F$-crystal. There exist two special minimal $F$-crystals:
\begin{enumerate}[(1)]
\item The minimal minimal $F$-crystal $(M_+, \varphi)$ inside $(M[1/p], \varphi[1/p])$ containing $(M, \varphi)$, i.e. for every minimal $F$-crystal $(M', \varphi)$ inside $(M[1/p], \varphi[1/p])$ containing $(M, \varphi)$, we have $(M_+, \varphi) \subset (M', \varphi)$.
\item The maximal minimal $F$-crystal $(M_-, \varphi)$ inside $(M[1/p], \varphi[1/p])$ contained in $(M, \varphi)$, i.e. for every minimal $F$-crystal $(M', \varphi)$ inside $(M[1/p], \varphi[1/p])$ contained in $(M, \varphi)$, we have $(M', \varphi) \subset (M_-, \varphi)$.
\end{enumerate}
\end{proposition}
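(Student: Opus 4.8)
The plan is to first reduce to the isoclinic case using the slope decomposition together with the maximal‑order characterization of minimality (Proposition \ref{proposition:minimalmaxorder}), and then, over a fixed isoclinic component, to build $(M_+,\varphi)$ from the minimal $F$‑valuation of Proposition \ref{prop:valuationminimal} and $(M_-,\varphi)$ from the fact that a sum of minimal $F$‑crystals is minimal. In detail: write $\mathcal{M}[1/p]=\bigoplus_{\lambda}\mathcal{N}_{\lambda}$ for the canonical decomposition into isoclinic $F$‑isocrystals, with projections $\pi_{\lambda}\colon M[1/p]\to\mathcal{N}_{\lambda}$. First I would show that every minimal $F$‑crystal $(M',\varphi)$ inside $(M[1/p],\varphi[1/p])$ is compatible with this decomposition, i.e. $M'=\bigoplus_{\lambda}(M'\cap\mathcal{N}_{\lambda})$: the central idempotents $e_{\lambda}\in\mathrm{End}(\mathcal{M}[1/p])$ cutting out the $\mathcal{N}_{\lambda}$ lie in every maximal order of the semisimple algebra $\mathrm{End}(\mathcal{M}[1/p])$ (this rests on the uniqueness of the maximal order in each simple factor $M_{m_{\lambda}}(K_{\lambda})$), hence in $\mathrm{End}(M')$ by Proposition \ref{proposition:minimalmaxorder}, so they preserve $M'$; each $(M'\cap\mathcal{N}_{\lambda},\varphi)$ is then isoclinic and, as a direct summand of the minimal $\mathcal{M}'$, is minimal by Lemma \ref{lemma:computeleveltorsion2}. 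Granting this, it suffices to solve the isoclinic problem: for an isoclinic $F$‑isocrystal $(N,\varphi)$ of slope $\lambda$ and an $F$‑crystal $(P,\varphi)$ with $P[1/p]=N$, produce the smallest minimal $F$‑crystal $M_{+,\lambda}(P)$ in $N$ containing $P$ and the largest minimal $F$‑crystal $M_{-,\lambda}(P)$ in $N$ contained in $P$. Then $M_+:=\bigoplus_{\lambda}M_{+,\lambda}(\pi_{\lambda}(M))$ contains $M$ and is minimal by Proposition \ref{proposition:directsumminimal}, and if $M\subset M'$ with $M'$ minimal then $\pi_{\lambda}(M)\subset M'\cap\mathcal{N}_{\lambda}$ forces $M_{+,\lambda}(\pi_{\lambda}(M))\subset M'\cap\mathcal{N}_{\lambda}$, whence $M_+\subset M'$; symmetrically $M_-:=\bigoplus_{\lambda}M_{-,\lambda}(M\cap\mathcal{N}_{\lambda})$ has the required properties.

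For $M_{+,\lambda}(P)$, let $\mathscr{W}$ be the set of $F$‑valuations $w$ of slope $\lambda$ on $N$ with $w|_{P}\geq 0$. By Proposition \ref{prop:valuationminimal}, $\mathscr{W}$ has a minimal element $w_0$ (so $w_0\leq w$ pointwise for all $w\in\mathscr{W}$), non‑degenerate since $(P,\varphi)$ is isoclinic of slope $\lambda$. Set $M_{+,\lambda}(P):=N^{w_0\geq 0}$. This is a $\varphi$‑stable $W(k)$‑submodule of $N$ containing $P$; fixing a minimal $F$‑crystal $M_1=N^{w_1\geq 0}$ in $N$ (one exists by Theorem \ref{theorem:maintheorema}) and $c\gg 0$ with $w_1+c\in\mathscr{W}$ (possible because $w_1$ is bounded below on the lattice $P$), minimality of $w_0$ gives $P\subset N^{w_0\geq 0}\subset N^{w_1\geq -c}=p^{-c}M_1$, so $N^{w_0\geq 0}$ is a lattice, and it is minimal by the implication (c)$\Rightarrow$(a) of Theorem \ref{thm:anotherminimaldes}. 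Finally, if $(M',\varphi)$ is any minimal $F$‑crystal in $N$ with $P\subset M'$, then $M'=N^{w\geq 0}$ for some $F$‑valuation $w$ of slope $\lambda$ by (a)$\Rightarrow$(c) of Theorem \ref{thm:anotherminimaldes}, and $P\subset M'$ forces $w\in\mathscr{W}$, so $w_0\leq w$ and $M_{+,\lambda}(P)=N^{w_0\geq 0}\subset N^{w\geq 0}=M'$.

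For $M_{-,\lambda}(P)$ the key input is that the \emph{sum of two minimal $F$‑crystals in $N$ is again minimal}; the analogous claim for intersections is immediate from Theorem \ref{thm:anotherminimaldes}, since for $M'_i=N^{w_i\geq 0}$ the pointwise minimum $\min(w_1,w_2)$ is again an $F$‑valuation of slope $\lambda$ and $M'_1\cap M'_2=N^{\min(w_1,w_2)\geq 0}$ is visibly a lattice. For sums one can either use the inf‑convolution $w(x)=\sup_{x_1+x_2=x}\min(w_1(x_1),w_2(x_2))$ — which is again an $F$‑valuation of slope $\lambda$ with $N^{w\geq 0}=M'_1+M'_2$, the one delicate point being that the supremum is attained, which holds because any $F$‑valuation $w_i$ with $N^{w_i\geq 0}$ a lattice induces the canonical $p$‑adic topology on $N$ — or dualize the intersection statement, a suitably Tate‑twisted $W(k)$‑dual interchanging $\cap$ and $+$ and carrying minimal $F$‑crystals to minimal $F$‑crystals, since an anti‑isomorphism of semisimple $\mathbb{Q}_p$‑algebras sends maximal orders to maximal orders and one applies Proposition \ref{proposition:minimalmaxorder} again. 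Granting this, let $\mathscr{S}$ be the set of minimal $F$‑crystals contained in $P$; it is nonempty ($p^cM_1\subset P$ for $c\gg 0$, with $p^cM_1\cong M_1$ minimal) and closed under finite sums, hence upward directed, so $M_{-,\lambda}(P):=\sum_{M'\in\mathscr{S}}M'$ is an increasing union of lattices inside $P$; their colengths $\mathrm{length}_{W(k)}(P/M')$ decrease, so the union stabilizes, giving $M_{-,\lambda}(P)\in\mathscr{S}$, which by construction contains every member of $\mathscr{S}$. The main obstacle in the whole argument is exactly the italicized claim that the sum of minimal $F$‑crystals is minimal: unlike the intersection it is not formal from the valuation criterion, and forces one either to justify attainment in the inf‑convolution via the topology remark or to set up the Tate‑twisted duality carefully enough that it is transparently compatible with Frobenius and with the minimality characterization of Proposition \ref{proposition:minimalmaxorder}; a secondary point worth spelling out is that the idempotents $e_{\lambda}$ lie in $\mathrm{End}(M')$ whenever the latter is a maximal order.
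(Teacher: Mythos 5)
Your proof is correct, and for Part (1) it follows the paper's route: build $(M_+,\varphi)$ from the minimal $F$-valuation of Proposition~\ref{prop:valuationminimal} in the isoclinic case and patch over the slope decomposition. The one genuine difference in Part (1) is that you justify the fact that a minimal $F$-crystal $M'\subset M[1/p]$ decomposes along the isoclinic components — via the central idempotents $e_\lambda$ lying in the maximal order $\mathrm{End}(\mathcal{M}')$ — whereas the paper simply asserts that every such $M'$ is a direct sum $\bigoplus_i M'_i$ with $M'_i\supset M_i$; your idempotent argument is a welcome elaboration of exactly that gap. For Part (2) your route is genuinely different from the paper's: the paper globally dualizes Part (1) by passing to $(M^*, p^{e_r}\varphi^*)$ with $e_r$ the largest Hodge slope, taking its $(\,\cdot\,)_+$, and dualizing back, so that the maximality of $M_-$ is a formal consequence of the minimality of $(M^*)_+$; you instead realize $M_-$ as a directed union of the minimal $F$-crystals contained in $P$, which hinges on the claim that a sum of two minimal $F$-crystals of the same slope is minimal. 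That claim is the one real loose end in your write-up, and you correctly flag it: the inf-convolution $w(x)=\sup_{x_1+x_2=x}\min(w_1(x_1),w_2(x_2))$ does satisfy the $F$-valuation axioms, and attainment of the supremum (hence $N^{w\ge0}=M'_1+M'_2$) does follow because the value group of each $w_i$ on an isoclinic minimal lattice of slope $s/r$ is $\tfrac1r\mathbb{Z}$, so the set of attainable values is discrete; alternatively, your suggestion to dualize the (easy) intersection statement works. The paper's wholesale duality argument is arguably slicker precisely because it sidesteps this sum-is-minimal step; your approach buys a more direct, valuation-theoretic construction of $M_-$ at the cost of having to nail that one lemma.
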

\begin{proof}
This is a generalization of \cite[Proposition 5.19]{Vasiu:traversosolved} and we use the same strategy to prove this proposition.

We first prove the proposition in the isoclinic case. Suppose $(M, \varphi)$ is isoclinic, there exists a smallest $F$-valuation $w$ of slope $\lambda$ such that $w(M) \geq 0$ by Proposition \ref{prop:valuationminimal}. Let $M_+ = M[1/p]^{w \geq 0}$. By Theorem \ref{thm:anotherminimaldes}, any other minimal $F$-crystal $(M', \varphi)$ inside $(M[1/p], \varphi[1/p])$ containing $(M, \varphi)$ is of the form $M[1/p]^{w' \geq 0}$ such that $w \leq w'$. Thus $(M_+, \varphi) \subset (M', \varphi)$. This completes the proof of Part (1) in the isoclinic case.

Now we prove the general case of Part (1). Let $(M[1/p], \varphi[1/p]) \cong \bigoplus_{i=1}^t (N_i, \varphi)$ where each $(N_i, \varphi)$ is isoclinic of $\lambda_i$ such that $\lambda_i \neq \lambda_j$ if $i \neq j$. Let $M_i$ be the image of $M$ in $N_i$. We claim that $(M_+, \varphi) \cong \bigoplus_{i=1}^t ((M_i)_+, \varphi)$. Since $(M, \varphi) \subset \bigoplus_{i=1}^t (M_i, \varphi)$, we know that $(M, \varphi) \subset \bigoplus_{i=1}^t ((M_i)_+, \varphi)$. Each minimal $F$-crystal $(M', \varphi)$ containing $(M, \varphi)$ is a direct sum $\bigoplus_{i=1}^t(M'_i, \varphi)$ where each $M'_i$ contains $M_i$. Thus $((M_i)_+, \varphi) \subset (M'_i, \varphi)$ and $(M_+, \varphi) = \bigoplus_{i=1}^t ((M_i)_+, \varphi)$.

To prove the existence of the maximal minimal $F$-crystal containing $(M, \varphi)$ in the isoclinic case, we use duality. Recall that for any $F$-crystal $\mathcal{M} = (M, \varphi)$ we denote by $\mathcal{M}^* = (M^*, \varphi^*)$ its dual where $M^* = \textrm{Hom}_{W(k)}(M, W(k))$ and $\varphi^* (f) = \sigma \circ f \circ \varphi^{-1}$ for $f \in \textrm{Hom}_{W(k)}(M, W(k))$. Note that $\mathcal{M}^*$ in general is only a latticed $F$-isocrystal and is not an $F$-crystal unless all Hodge slopes of $\mathcal{M}$ are zero. 

Let $e_r$ be the maximal Hodge slope of $\mathcal{M}$. Then we know that $(M^*, p^{e_r}\varphi^*)$ is an $F$-crystal. Let $((M^*)_+, p^{e_r}\varphi^*)$ be the minimal minimal $F$-crystal containing $(M^*, p^{e_r}\varphi^*)$, then $(((M^*)_+)^*, \varphi)$ is an $F$-crystal contained in $(M, \varphi)$ and it is minimal because its isomorphism number equals to the isomorphism number of $((M^*)_+, p^{e_r}\varphi^*)$. We claim that $(((M^*)_+)^*, \varphi)$ is the maximal minimal $F$-crystal contained in $(M, \varphi)$. If not, then there is a minimal $F$-crystal $(M', \varphi)$ such that $(((M^*)_+)^*, \varphi) \subsetneq (M', \varphi) \subset (M, \varphi)$. By taking duals, we have $(M^*, p^{e_r}\varphi^*) \subset (M'^*, p^{e_r}\varphi^*) \subsetneq ((M^*)_+, p^{e_r}\varphi^*)$ and this is a contradiction! Hence $(((M^*)_+)^*, \varphi) = (M_-, \varphi)$. This completes the proof of Part (2).
\end{proof}

For any finitely generated torsion $W(k)$-module $M$, we say that $m$ is the \emph{$p$-exponent} if $m$ is the smallest non-negative integer such that $p^mM = 0$. The following lemma is a generalization of {\cite[Lemma 5.22]{Vasiu:traversosolved}}. We will use the same strategy to prove it.

\begin{lemma} \label{lemma:pexp}
Let $(M, \varphi)$ be an $F$-crystal. The following three $W(k)$-module $M_+/M_-$, $M/M_-$, $M_+/M$ have the same $p$-exponents.
\end{lemma}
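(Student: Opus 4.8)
The plan is to reduce everything to the duality already set up in the proof of Proposition \ref{prop:minimalminimal} together with the obvious inclusions $M_- \subset M \subset M_+$. First I would observe that $M_-\subset M\subset M_+$ are all lattices in $M[1/p]$, so the three quotients $M_+/M_-$, $M/M_-$, $M_+/M$ are finitely generated torsion $W(k)$-modules, and for any chain of lattices $L_1\subset L_2\subset L_3$ the $p$-exponent of $L_3/L_1$ is the maximum of the $p$-exponents of $L_3/L_2$ and $L_2/L_1$ (this is immediate: $p^mL_3\subset L_1$ iff $p^mL_3\subset L_2$ and $p^mL_2\subset L_1$ once one knows $p^{m}L_3\subset L_2$, using the snake-style argument, or simply elementary divisors). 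Hence writing $a$, $b$, $c$ for the $p$-exponents of $M_+/M$, $M/M_-$, $M_+/M_-$ respectively, we get $c=\max\{a,b\}$ for free, and the real content of the lemma is the equality $a=b$.

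To prove $a=b$ I would use duality. For an $F$-crystal $\mathcal{M}=(M,\varphi)$ with maximal Hodge slope $e_r$, recall from the proof of Proposition \ref{prop:minimalminimal} that $(M^*,p^{e_r}\varphi^*)$ is an $F$-crystal, and that the construction there shows $(M_-,\varphi)=(((M^*)_+)^*,\varphi)$; dually, applying the same to $\mathcal{M}^*$ in place of $\mathcal{M}$ and using $(M^*)^*=M$, $((M^*)^*)$-identifications, one gets that $(M_+,\varphi)$ is the dual of $((M^*)_-, p^{e_r}\varphi^*)$, i.e. $(M_+)^* = (M^*)_-$ and $(M_-)^* = (M^*)_+$ inside $M^*[1/p]$. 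Now for lattices $L_1\subset L_2$ in a $B(k)$-vector space $V$, dualizing reverses the inclusion: $L_2^*\subset L_1^*$ in $V^*$, and the $p$-exponent of $L_2/L_1$ equals the $p$-exponent of $L_1^*/L_2^*$ (because the elementary divisors of the inclusion $L_1\hookrightarrow L_2$ are exactly those of $L_2^*\hookrightarrow L_1^*$). Applying this with $L_1=M$, $L_2=M_+$ gives that the $p$-exponent of $M_+/M$ equals the $p$-exponent of $M^*/(M_+)^* = M^*/(M^*)_-$; and applying it with $L_1=M_-$, $L_2=M$ gives that the $p$-exponent of $M/M_-$ equals that of $(M_-)^*/M^* = (M^*)_+/M^*$.

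So it remains to see that the $p$-exponent of $M^*/(M^*)_-$ equals the $p$-exponent of $(M^*)_+/M^*$, which is exactly the statement $a=b$ but for the $F$-crystal $\mathcal{M}^\vee:=(M^*,p^{e_r}\varphi^*)$ instead of $\mathcal{M}$. This looks circular, so the genuine input must be a symmetry that is not self-dual: I would instead prove directly that for \emph{every} $F$-crystal, $p$-exp$(M_+/M)=p$-exp$(M/M_-)$, by a single argument that is manifestly duality-symmetric. The cleanest route: in the isoclinic case of slope $\lambda=s/r$, Lemma \ref{lemma:anotherminimaldes} and Theorem \ref{thm:anotherminimaldes} let one choose an $F$-valuation $w$ of slope $\lambda$ on $M[1/p]$ with $w(M[1/p])=\tfrac1r\mathbb{Z}$ such that $M_+=M[1/p]^{w\geq 0}$; then $M_-=M[1/p]^{w\geq c/r}$ where $c/r$ is chosen minimal so that this lies in $M$, and the $p$-exponent of both $M_+/M$ and $M/M_-$ is read off from the "jump pattern" of $\dim_{B(\mathbb{F}_{p^r})}$ of the graded pieces $M[1/p]^{w\geq i/r}/M[1/p]^{w> i/r}$ intersected with $M$; the duality $M_\pm^* = (M^*)_\mp$ forces this jump pattern to be palindromic, which yields the equality. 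The general case then follows by the direct-sum decomposition over distinct slopes, exactly as in Proposition \ref{prop:minimalminimal}, since $p$-exponents of direct sums are maxima and the claimed equality is preserved under taking maxima of pairs.

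The main obstacle I anticipate is making the palindromic-jump argument precise: one must pin down that $M_- = M[1/p]^{w\ge c/r}$ for the appropriate $c$ (i.e. that the maximal minimal sublattice of $M$ is itself of the form $M[1/p]^{w\ge \alpha}$ for the same valuation $w$ realizing $M_+$), and then match the elementary divisors of $M\hookrightarrow M_+$ with those of $M_-\hookrightarrow M$ via the pairing $M_+/M \times M/M_- \to$ (torsion) induced by duality. Once the identification $(M_\pm)^* = (M^*)_\mp$ from Proposition \ref{prop:minimalminimal} is in hand and the lattice/elementary-divisor bookkeeping under duality is set up, the rest is routine; so I would spend most of the write-up carefully establishing these two points and then invoke the isoclinic-to-general reduction verbatim from the earlier proof.
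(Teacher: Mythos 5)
Your very first step is incorrect, and it is load-bearing. You claim that for any chain of lattices $L_1\subset L_2\subset L_3$, the $p$-exponent of $L_3/L_1$ equals $\max$ of the $p$-exponents of $L_3/L_2$ and $L_2/L_1$. This is false in general: take $L_3$ of rank $1$, $L_2=pL_3$, $L_1=p^2L_3$. Then $p$-exp$(L_3/L_2)=p$-exp$(L_2/L_1)=1$ but $p$-exp$(L_3/L_1)=2$. (In general one only has $\max\{a,b\}\le c\le a+b$.) Your proposed justification, that $p^mL_3\subset L_1$ iff $p^mL_3\subset L_2$ and $p^mL_2\subset L_1$, is wrong in the same example. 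So the equality $c=\max\{a,b\}$ is not a free reduction; it is already most of the content of the lemma, and assuming it leaves the heart of the argument unaddressed. The remaining part of your sketch (deduce $a=b$ via a palindromic jump-pattern forced by duality) is, as you note yourself, at risk of circularity and is not carried out; it would need the identifications $(M_\pm)^*=(M^*)_\mp$ and a careful elementary-divisor matching to be made precise, and even then it would only give $a=b$, not the missing $c=\max\{a,b\}$.

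The paper's proof is much more direct and avoids both problems. It uses the \emph{extremality} built into the definitions of $M_+$ and $M_-$ from Proposition \ref{prop:minimalminimal}: if $p^m$ kills $M_+/M$, then $p^mM_+\subset M$; since $p^mM_+$ is a minimal $F$-crystal inside $M$, the maximality of $M_-$ gives $p^mM_+\subset M_-$, so $p^m$ already kills $M_+/M_-$. Combined with the obvious reverse implication (because $M_-\subset M$), this gives $p$-exp$(M_+/M)=p$-exp$(M_+/M_-)$. Symmetrically, if $p^m$ kills $M/M_-$, then $p^{-m}M_-$ is a minimal $F$-crystal containing $M$, so $M_+\subset p^{-m}M_-$ by minimality of $M_+$, giving $p$-exp$(M/M_-)=p$-exp$(M_+/M_-)$. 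No duality, no isoclinic reduction, no elementary-divisor bookkeeping is needed. I would suggest replacing your argument with this one: it proves both $a=c$ and $b=c$ in a few lines and relies only on the defining universal properties of $M_\pm$.
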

\begin{proof}
If $p^m$ annihilates $M_+/M$, then $p^mM_+ \subset M$. Since $(p^mM_+, \varphi)$ is minimal, we know that $p^mM_+ \subset M_-$ and thus $p^m$ annihilates $M_+/M_-$. Hence the $p$-exponent of $M_+/M$ and $M_+/M_-$ are the same. 

If $p^m$ annihilates $M/M_-$, then $p^mM \subset M_-$. Since $(p^{-m}M_{-},\varphi)$ is minimal, we know that $M_+ \subset p^{-m}M_-$ and thus $p^m$ annihilates $M_+/M_-$. Hence the $p$-exponent of $M/M_-$ and $M_+/M_-$ are the same.
\end{proof}

\section{Minimal heights}

Let $\mathcal{M}$ and $\mathcal{M}'$ be two isogenous non-ordinary $F$-crystals that are not necessarily isoclinic. Let $q = q_{\mathcal{M'}, \mathcal{M}}$ be the smallest non-negative integer such that $p^q$ annihilates $M/M'$ for some isogeny $\mathcal{M}' \hookrightarrow \mathcal{M}$. It is easy to check that $q_{\mathcal{M}',\mathcal{M}} = q_{\mathcal{M},\mathcal{M}'}$. See \cite[Proposition 1.4.4]{Vasiu:reconstructing} for a version of Dieudonn\'e modules, it essentially suggests the proof the following proposition.

\begin{proposition} \label{propositon:minimalheightestimate}
Let $\mathcal{M}$ and $\mathcal{M}'$ be two isogenous non-ordinary $F$-crystals that are not necessarily isoclinic. We have the following inequality: 
\[n_{\mathcal{M}} \leq n_{\mathcal{M}'} + 2q_{\mathcal{M'}, \mathcal{M}}.\]
\end{proposition}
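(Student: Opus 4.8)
The plan is to relate the isomorphism number $n_{\mathcal{M}}$ to the level torsion $\ell_{\mathcal{M}}$ via the main result of \cite{Xiao:vasiuconjecture}, and then to compare the level modules of $\mathcal{M}$ and $\mathcal{M}'$ inside the common $F$-isocrystal $\mathcal{N} = \mathcal{M}[1/p] = \mathcal{M}'[1/p]$. Concretely, since $\mathcal{M}$ is non-ordinary we have $n_{\mathcal{M}} = \ell_{\mathcal{M}}$, so it suffices to prove $\ell_{\mathcal{M}} \leq \ell_{\mathcal{M}'} + 2q$ where $q = q_{\mathcal{M}',\mathcal{M}}$ (using $\ell_{\mathcal{M}'} = n_{\mathcal{M}'}$ as well, or just carrying $n_{\mathcal{M}'}$ directly if $\mathcal{M}'$ happens to be ordinary — but note that $\mathcal{M}$ non-ordinary and $\mathcal{M} \sim \mathcal{M}'$ forces $\mathcal{M}'$ non-ordinary too, so both level-torsion identities apply).

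First I would fix an isogeny $\mathcal{M}' \hookrightarrow \mathcal{M}$ with $p^q$ annihilating $M/M'$, so that $p^q M \subset M' \subset M$, and correspondingly, passing to $E = \mathrm{End}(M)$ and $E' = \mathrm{End}(M')$ viewed as lattices in the common $E[1/p] = \mathrm{End}(\mathcal{N})$, I would check the sandwich $p^{2q} E \subset E' \subset p^{-2q} E$ (an endomorphism of $M$ composed on both sides with the inclusions gains a factor $p^q$ on each side, hence $p^{2q}$ total, and symmetrically). The level module $O$ of $\mathcal{M}$ and the level module $O'$ of $\mathcal{M}'$ are defined intrinsically from the same decomposition $E[1/p] \cong L^+ \oplus L^0 \oplus L^-$ as the maximal $W(k)$-submodules of $E$, resp. $E'$, stable under $\varphi$ (resp. with the $\varphi^{-1}$-stability on the $L^-$ part and $\varphi$-invariance on $L^0$). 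The key observation is that $O$ is in fact the \emph{maximal} such $\varphi$-compatible submodule of $E[1/p]$ that is contained in $E$, and likewise $O'$ for $E'$; more to the point, $O$ and $O'$ only differ because $E$ and $E'$ differ, and the common maximal $\varphi$-compatible lattice $O_{\max}$ (the level module of the minimal $F$-crystal isogenous to $\mathcal{M}$, or simply $\bigcap$/$\bigcup$ over all translates) satisfies $O \subset O_{\max}$ and $O' \subset O_{\max}$, with $p^{\ell_{\mathcal{M}}} O_{\max} \subset O$ roughly controlling things.

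The cleanest route is: from $p^q M \subset M' \subset M$ deduce that the identity map gives morphisms of $F$-crystals $\mathcal{M}' \hookrightarrow \mathcal{M}$ and $\mathcal{M} \hookrightarrow p^{-q}\mathcal{M}' \cong \mathcal{M}'$, so $q_{\mathcal{M},\mathcal{M}'} = q$ and the two lattices $E, E'$ satisfy $p^q E' \subset (\text{image of }\mathrm{Hom}(M,M')) $ etc.; then use that the level torsion is monotone under such sandwiches. Precisely, I would argue: $O' \supset p^{q} \cdot (\text{something in } O)$ and $O \supset p^q \cdot (\text{something in } O')$ by transporting the $\varphi$-stable submodules across the isogeny and observing that conjugation by the isogeny changes a $\varphi$-stable lattice into another one up to a bounded $p$-power — here the power is $2q$ because conjugation $h \mapsto \iota \circ h \circ \iota^{-1}$ by an isogeny $\iota$ with $p^q$-bounded cokernel distorts lattices by $p^{2q}$. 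Combining $p^{\ell_{\mathcal{M}'}} E' \subset O' \subset E'$, $p^{2q} E \subset E'$, and the transport statement $p^{2q} O \subset \iota O' \iota^{-1} \subset O$ (or a variant), one gets $p^{\ell_{\mathcal{M}'} + 2q} E \subset O \subset E$, i.e. $\ell_{\mathcal{M}} \leq \ell_{\mathcal{M}'} + 2q$, and then $n_{\mathcal{M}} = \ell_{\mathcal{M}} \leq \ell_{\mathcal{M}'} + 2q = n_{\mathcal{M}'} + 2q$ finishes the proof.

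I expect the main obstacle to be making the ``transport of $\varphi$-stable lattices across an isogeny loses at most $p^{2q}$'' step fully rigorous, in particular handling the three summands $O^+, O^0, O^-$ uniformly: on $L^+$ one uses $\varphi(O^+) \subset O^+$, on $L^-$ one uses $\varphi^{-1}(O^-) \subset O^-$, and on $L^0$ the equality $\varphi(O^0) = O^0$ — and one must check the isogeny-conjugation respects this decomposition (it does, since $L^\pm, L^0$ are intrinsic to $\mathcal{N}$) and only shifts each piece by a bounded power of $p$. Since \cite[Proposition 1.4.4]{Vasiu:reconstructing} does exactly this in the Dieudonné-module setting and the level-module formalism is set up identically here (Lemmas \ref{lemma:computeleveltorsion1} and \ref{lemma:computeleveltorsion2}), I would largely cite that argument and just indicate the places where one replaces ``Dieudonné module'' by ``$F$-crystal'' and uses $n_{\bullet} = \ell_{\bullet}$ from \cite[Theorem 1.2]{Xiao:vasiuconjecture} for the non-ordinary hypothesis.
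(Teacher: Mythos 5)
Your overall strategy is the same as the paper's: use $n_{\bullet} = \ell_{\bullet}$ for the non-ordinary $F$-crystals from \cite[Theorem 1.2]{Xiao:vasiuconjecture}, fix an isogeny $\mathcal{M}' \hookrightarrow \mathcal{M}$ with $p^qM \subset M' \subset M$, sandwich $E = \mathrm{End}(M)$ and $E' = \mathrm{End}(M')$ inside the common $E[1/p]$, and then chain $p$-power containments through the level modules to bound $\ell_{\mathcal{M}}$ by $\ell_{\mathcal{M}'} + 2q$. So you have identified the right architecture. However, the one step you flag as the likely obstacle — the ``transport of $\varphi$-stable lattices across the isogeny'' — is exactly where your proposal as written does not close.

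The transport statement you write, ``$p^{2q} O \subset \iota O' \iota^{-1} \subset O$ (or a variant)'', is the gap. Since $\iota$ is the inclusion $M' \hookrightarrow M$, conjugation by $\iota$ is the identity on $\mathrm{End}(M[1/p])$, so the right-hand containment says $O_{\mathcal{M}'} \subset O_{\mathcal{M}}$. That is false in general, because $E' \not\subset E$ (only $p^q E' \subset E$ holds), so $O_{\mathcal{M}'}$ need not even lie inside $E$. If instead you retreat to the symmetric version $p^{2q} O_{\mathcal{M}'} \subset O_{\mathcal{M}}$, the chain degrades: $p^{\ell_{\mathcal{M}'}+2q+2q}E \subset p^{\ell_{\mathcal{M}'}+2q}E' \subset p^{2q}O_{\mathcal{M}'} \subset O_{\mathcal{M}}$ gives only $\ell_{\mathcal{M}} \leq \ell_{\mathcal{M}'} + 4q$, which is weaker than the claimed bound. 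The correct one-sided statement is $p^{q} O_{\mathcal{M}'} \subset O_{\mathcal{M}}$, losing a single factor of $p^q$, and it is what makes the factor $2q$ rather than $4q$ come out. The paper proves it by the maximality characterization of the level module: for each summand, e.g. $\varphi^i(p^q O^+_{\mathcal{M}'}) \subset p^q O^+_{\mathcal{M}'} \subset p^q E' \cap L^+ \subset E \cap L^+$ for all $i \geq 0$, so $p^q O^+_{\mathcal{M}'}$ is a $\varphi$-stable $W(k)$-submodule of $E \cap L^+$ and hence is contained in $O^+_{\mathcal{M}}$ by maximality; likewise for $O^0$ and $O^-$. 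Once you have $p^q O_{\mathcal{M}'} \subset O_{\mathcal{M}}$, the chain $p^{\ell_{\mathcal{M}'}+2q}E \subset p^{\ell_{\mathcal{M}'}+q}E' \subset p^{q}O_{\mathcal{M}'} \subset O_{\mathcal{M}} \subset E$ gives the desired $\ell_{\mathcal{M}} \leq \ell_{\mathcal{M}'} + 2q$. You should make this one-sided transport step precise rather than defer it to a ``variant''; the speculative $O_{\max}$ detour is unnecessary.
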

\begin{proof}
To ease notation, let $q = q_{\mathcal{M'}, \mathcal{M}}$. Let $O_{\mathcal{M}}$ and $O_{\mathcal{M}'}$ be the level modules of $\mathcal{M}$ and $\mathcal{M}'$ respectively. As $p^qM \subset M' \subset M$, we have $p^{2q} \textrm{End}(M) \subset p^q \textrm{End}(M') \subset \textrm{End}(M)$. We want to show that $p^q O_{\mathcal{M}'} \subset O_{\mathcal{M}}$. For $i \in \mathbb{Z}_{\geq 0}$, we have 
\[\varphi^i(p^qO^+_{\mathcal{M}'}) \subset p^qO^+_{\mathcal{M}'} \subset p^q \textrm{End}(M') \cap L^+ \subset \textrm{End}(M) \cap L^+,\]
thus $p^qO^+_{\mathcal{M}'} \subset O^+_{\mathcal{M}}$. Similarly, $p^qO^0_{\mathcal{M}'} \subset O^0_{\mathcal{M}}$ and $p^qO^-_{\mathcal{M}'} \subset O^-_{\mathcal{M}}$. Hence $p^qO_{\mathcal{M}'} \subset O_{\mathcal{M}}$. Then
\[p^{2q+\ell_{\mathcal{M}'}} \textrm{End}(M) \subset p^{q+\ell_{\mathcal{M}'}} \textrm{End}(M') \subset p^{q}O_{\mathcal{M}'} \subset O_{\mathcal{M}} \subset \textrm{End}(M).\]
Thus $\ell_{\mathcal{M}} \leq \ell_{\mathcal{M}'} + 2q$. As $n_{\mathcal{M}} = \ell_{\mathcal{M}}$ and $n_{\mathcal{M}'} = \ell_{\mathcal{M}'}$ by \cite[Theorem 1.2]{Xiao:vasiuconjecture}, we conclude the proof of the proposition.
\end{proof}

\begin{definition}
Let $\mathcal{M}'$ be the minimal $F$-crystal isogenous to $\mathcal{M}$. We call $q_{\mathcal{M}} := q_{\mathcal{M'},\mathcal{M}}$ the \emph{minimal height} of $\mathcal{M}$. 
\end{definition}

\begin{corollary} \label{cor:estimateq}
For any $F$-crystal $\mathcal{M}$, we have $n_{\mathcal{M}} \leq 1 + 2q_{\mathcal{M}}$.
\end{corollary}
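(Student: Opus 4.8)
The plan is to deduce Corollary \ref{cor:estimateq} directly from Proposition \ref{propositon:minimalheightestimate} by specializing $\mathcal{M}'$ to be the unique minimal $F$-crystal isogenous to $\mathcal{M}$, so that $q_{\mathcal{M}',\mathcal{M}} = q_{\mathcal{M}}$ by definition. Since $\mathcal{M}'$ is minimal, Definition \ref{definition:minimalfcrystal} gives $n_{\mathcal{M}'} \leq 1$, and Proposition \ref{propositon:minimalheightestimate} then yields $n_{\mathcal{M}} \leq n_{\mathcal{M}'} + 2q_{\mathcal{M}} \leq 1 + 2q_{\mathcal{M}}$.

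The one subtlety is that Proposition \ref{propositon:minimalheightestimate} is stated for \emph{non-ordinary} $F$-crystals, so I would first dispose of the ordinary case separately. If $\mathcal{M}$ is ordinary, then its minimal isogeny representative $\mathcal{M}'$ is also ordinary (being isogenous), and an ordinary $F$-crystal has isomorphism number $n_{\mathcal{M}} \leq 1$ by \cite[Proposition 2.9 and Corollary 2.11]{Xiao:computing}, which already gives $n_{\mathcal{M}} \leq 1 \leq 1 + 2q_{\mathcal{M}}$ since $q_{\mathcal{M}} \geq 0$. So the inequality holds trivially in that case, and for the non-ordinary case we invoke the proposition as above.

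I do not expect any real obstacle here; this is a short formal consequence of the preceding proposition and the definition of minimal height, with only the routine ordinary/non-ordinary case split to handle. The proof would read essentially: \emph{If $\mathcal{M}$ is ordinary, then $n_{\mathcal{M}} \leq 1 \leq 1 + 2q_{\mathcal{M}}$. Otherwise, let $\mathcal{M}'$ be the minimal $F$-crystal isogenous to $\mathcal{M}$; it is non-ordinary and satisfies $n_{\mathcal{M}'} \leq 1$ by Definition \ref{definition:minimalfcrystal}, so by Proposition \ref{propositon:minimalheightestimate}, $n_{\mathcal{M}} \leq n_{\mathcal{M}'} + 2q_{\mathcal{M}',\mathcal{M}} = n_{\mathcal{M}'} + 2q_{\mathcal{M}} \leq 1 + 2q_{\mathcal{M}}$.}
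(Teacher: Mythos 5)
Your proof is correct and follows essentially the same route as the paper's: dispose of the ordinary case via $n_{\mathcal{M}} \leq 1$, then in the non-ordinary case apply Proposition \ref{propositon:minimalheightestimate} to the minimal $F$-crystal $\mathcal{M}'$ isogenous to $\mathcal{M}$ together with $n_{\mathcal{M}'} \leq 1$. No discrepancies to note.
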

\begin{proof}
If $\mathcal{M}$ is ordinary, then $n_{\mathcal{M}} \leq 1$ and thus $n_{\mathcal{M}} \leq 1 + 2q_{\mathcal{M}}$ is always true. If $\mathcal{M}$ is non-ordinary, then the corollary follows from Proposition \ref{propositon:minimalheightestimate} and the inequality $n_{\mathcal{M'}} \leq 1$.
\end{proof}

Corollary \ref{cor:estimateq} suggests a method to estimate $n_\mathcal{M}$ via $q_{\mathcal{M}}$. On the other hand, the estimate of Corollary \ref{cor:estimateq} is optimal in the sense that there exists an $F$-crystal $\mathcal{M}$ such that $n_{\mathcal{M}} = 1+ 2q_{\mathcal{M}}$. When $\mathcal{M}$ is minimal with at least two distinct Hodge slopes, we will have $n_{\mathcal{M}} = 1 + 2q_{\mathcal{M}} = 1$. But on the other hand there also exists $F$-crystal $\mathcal{M}$ such that $n_{\mathcal{M}} < 1 + 2q_{\mathcal{M}}$. See \cite[Example 4.7.1]{Vasiu:reconstructing} for an example in the case of Dieudonn\'e modules.

\section{Estimates} \label{section:estimate}

From now on, we assume that $\mathcal{M} = (M, \varphi)$ is an isosimple $F$-crystal. In order to compute $q_{\mathcal{M}}$, it is enough to compute the $p$-exponent of $M_+/M_-$ by Lemma \ref{lemma:pexp}. Let $\lambda =  s/r$ be the Newton slope of $\mathcal{M}$ in the reduced form, we know that $(M[1/p], \varphi[1/p]) = K_{\lambda} \otimes_{B(\mathbb{F}_{p^r})} B(k)$. To define an $F$-valuation on $(M[1/p], \varphi[1/p])$ of slope $\lambda$, we want to have a unique way to express elements in $M[1/p]$.

\begin{lemma} \label{lemma:valuationonisocrystal}
Let $T \subset W(k)$ be the image of the Teichm\"uller lift $t : k \to W(k)$. Any element $x \in K_{\lambda} \otimes_{B(\mathbb{F}_{p^r})} B(k)$ can be uniquely expressed in the form
\[x = \sum_{i \geq n}^{\infty} \xi^i \otimes x_i, \]
with $x_i \in T$ for all $i \geq n$ where  $n \in \mathbb{Z}$ depends on $x$. The leading coefficient $x_n \neq 0$ when $x \neq 0$.
\end{lemma}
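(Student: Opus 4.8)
The plan is to exploit the algebra structure of $K_\lambda$ together with the Teichmüller expansion in $W(k)$. Recall that $K_\lambda$ is generated over $B(\mathbb{F}_{p^r})$ by $\xi$ with $\xi^r = p$ and $x\xi = \xi\sigma^{-n}(x)$, so as a left $B(\mathbb{F}_{p^r})$-module it has basis $1, \xi, \xi^2, \dots, \xi^{r-1}$. Hence $K_\lambda \otimes_{B(\mathbb{F}_{p^r})} B(k)$ is a free left $B(k)$-module with basis $\xi^0, \dots, \xi^{r-1}$ (here $B(k)$ acts through $1 \otimes B(k)$, and one must check this action is well-defined, using that $\sigma$ is an automorphism of $B(k)$ extending $\sigma$ on $B(\mathbb{F}_{p^r})$). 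First I would write an arbitrary $x$ uniquely as $x = \sum_{j=0}^{r-1} \xi^j \otimes a_j$ with $a_j \in B(k)$.

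Next I would expand each coefficient $a_j \in B(k)$ in its $p$-adic Teichmüller expansion: $a_j = \sum_{m \geq m_j} p^m\, t(c_{j,m})$ for suitable $c_{j,m} \in k$, where $m_j = \mathrm{ord}_p(a_j) \in \mathbb{Z} \cup \{\infty\}$. Using $\xi^r = p$ and moving each power of $p$ past $\xi^j$ via the relations (note $p$ is central, since $\xi p = \xi \xi^r = \xi^r \xi = p\xi$ and $p$ commutes with $B(\mathbb{F}_{p^r})$, and it also commutes with all of $B(k)$ in the tensor product), I can rewrite $p^m \xi^j = \xi^{mr+j}$. Absorbing the Teichmüller element into the second tensor slot — more precisely writing $\xi^j \otimes p^m t(c_{j,m}) = \xi^{mr+j} \otimes t(c_{j,m})$ after checking $p^m t(c) = \xi^{mr}\otimes t(c)$ type identities hold in the tensor product — collects everything into a single sum $x = \sum_{i \geq n} \xi^i \otimes x_i$ with $x_i = t(c_{j,m}) \in T$ where $i = mr + j$ and $n = \min_j (m_j r + j)$.

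For uniqueness, suppose $\sum_{i \geq n} \xi^i \otimes x_i = 0$ with $x_i \in T$. Grouping the index $i$ by its residue $j$ modulo $r$, say $i = mr + j$, and using $\xi^{mr+j} \otimes x_i = \xi^j \otimes p^m x_i$ together with the freeness of $K_\lambda \otimes B(k)$ over $B(k)$ on $\xi^0, \dots, \xi^{r-1}$, I reduce to showing that for each fixed $j$ the $B(k)$-coefficient $\sum_m p^m x_{mr+j} = 0$ forces all $x_{mr+j} = 0$; this is exactly the uniqueness of Teichmüller expansions in $B(k)$. The claim that the leading coefficient $x_n \neq 0$ when $x \neq 0$ is immediate: if $x \neq 0$ then some $a_j \neq 0$, so some $m_j$ is finite, so $n$ is finite and by construction $x_n$ is a nonzero Teichmüller representative.

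The main obstacle I anticipate is purely bookkeeping: verifying that the two tensor-factor manipulations (pushing powers of $p$ across $\xi$, and absorbing Teichmüller lifts of $\mathbb{F}_{p^r}$-elements — or rather the scalars from $W(\mathbb{F}_{p^r})$ — into the $B(k)$ factor) are legitimate, i.e. respect the relation $x\xi = \xi\sigma^{-n}(x)$ and the balancing of the tensor product over $B(\mathbb{F}_{p^r})$. Once one fixes the convention that $B(k)$ acts on the left via $1 \otimes (-)$ and notes that $p$ is central in $K_\lambda$, these checks are routine but need to be stated carefully so that the expansion and its uniqueness are genuinely well-defined rather than dependent on how one splits $i$ as $mr+j$.
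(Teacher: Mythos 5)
Your proof is correct, and it takes a genuinely cleaner route than the paper's. The paper's argument first treats simple tensors $\bigl(\sum_i \xi^i a_i\bigr)\otimes b$, obtaining the desired form for each, and then handles a general element as a finite sum of simple tensors; at that stage the Teichm\"uller coefficients from different summands must be added, and because the Teichm\"uller lift $t : k \to W(k)$ is not additive, the paper resorts to extracting Teichm\"uller digits one at a time via an iterated (formally infinite) process. You bypass this obstacle entirely: since $K_{\lambda}$ is free of rank $r$ as a right $B(\mathbb{F}_{p^r})$-module on $\xi^0,\dots,\xi^{r-1}$, the tensor product $K_{\lambda}\otimes_{B(\mathbb{F}_{p^r})} B(k)$ is free as a $B(k)$-module on the same basis, so every $x$ has a single, unique expansion $x=\sum_{j=0}^{r-1}\xi^j\otimes a_j$ with $a_j\in B(k)$; you then expand each $a_j$ once in its Teichm\"uller series and merge indices via the central relation $\xi^{mr}=p^m$, with no addition of Teichm\"uller digits ever occurring. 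Your verification that $\xi^j\otimes p^m t(c)=\xi^{j+mr}\otimes t(c)$ (using that $p$ is central in $K_\lambda$ and the $B(\mathbb{F}_{p^r})$-balancing of the tensor product) is exactly the bookkeeping that needs to be done, and you state it. The uniqueness argument is essentially the same in both proofs: group the index $i$ by its residue modulo $r$, invoke the $B(k)$-basis $\{\xi^j\otimes 1\}_{0\le j\le r-1}$, and reduce to the uniqueness of Teichm\"uller expansions in $B(k)$. Net gain of your approach: it removes the ad hoc iterative extraction step and makes the existence part a one-pass computation.
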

\begin{proof}
We first prove the lemma for simple tensors. Let $x = (\sum_{i=0}^{r-1} \xi^ia_i) \otimes b$ be a simple tensor in $K_{\lambda} \otimes_{B(\mathbb{F}_{p^r})} B(k)$ where $b \in B(k)$ and $a_i \in B(\mathbb{F}_{p^{r}})$. We have $x = \sum_{i=0}^{r-1} (\xi^ia_i \otimes b) = \sum_{i=0}^{r-1}(\xi^i \otimes a_ib)$. Let $b_i :=a_ib$ for all $0 \leq i \leq r-1$, we have $x = \sum_{i=0}^{r-1} (\xi^i  \otimes b_i)$. For each $i$, we know that $b_i = \sum_{j=n_i}^{\infty} c_{ij}p^j$ where $c_{ij} \in T$ and since $p = \xi^r$, we have $\xi^i \otimes b_i = \xi^i \otimes \sum_{j=n_i}^{\infty}c_{ij}p^j  = \sum_{j=n_i}^{\infty} (\xi^{i+jr} \otimes c_{ij})$. Therefore
\[x = \sum_{i=1}^{r-1}\sum_{j=n_i}^{\infty}(\xi^{i+jr} \otimes c_{ij}).\]
For any $0 \leq i_1 \neq i_2 \leq r-1$, we have $i_1+jr \neq i_2+j'r$ for any $j, j' \in \mathbb{Z}$. Hence no two $\xi^{i+jr} \otimes c_{ij}$ in the above expression share the same power of $\xi$. Therefore we get the desired form for $x$ where $n = \textrm{min}_{0 \leq i \leq r-1} \{i+n_ir\}$.

Any element of $K_{\lambda} \otimes_{B(\mathbb{F}_{p^r})} B(k) $ is a finite sum of simple tensors. Suppose 
\[x = \sum_{j=1}^t \left( \sum_{i \geq n_j} \xi^i \otimes x_{ij} \right).\]
Let $n = \textrm{min}_{1 \leq j \leq t}\{n_j\}$. Then
\[x = \sum_{i \geq n} \left( \xi^i \otimes (\sum_{j=1}^t x_{ij}) \right)= \sum_{i \geq n} \xi^i \otimes y_i,\]
where $y_i := \sum_{j=1}^t x_{ij}$ are not necessarily in $T$ as $t$ is not additive. Let $y_{n} = \sum_{l \geq 0} c_{l,n}p^l$ where $c_{l,n} \in T$. Then
\[x = \xi^{n} \otimes c_{0,n} +  \sum_{i \geq n+1} \xi^i \otimes y_i',\]
where $y_i'$ are not necessarily in $T$ just as above. Repeat this process until infinity so we will get
\[x = \sum_{i \geq n} \xi^i \otimes x_i.\]

Suppose $x = \sum_{i \geq n} \xi^i \otimes x_i = \sum_{i \geq n'} \xi^i \otimes x'_i$. We want to show that $n=n'$ and $x_i=x_i'$ for all $i$. Using the fact that $\xi^r=p$, we have
\[\sum_{j=0}^{r-1} \left(\xi^j \otimes \sum_{i \equiv j \; \textrm{mod} \; r} x_ip^{[\frac{i}{r}]}\right) =  \sum_{j=0}^{r-1} \left(\xi^j \otimes \sum_{i \equiv j \; \textrm{mod} \; r} x'_ip^{[\frac{i}{r}]}\right).\]
As the set $\{\xi^j \otimes 1\}_{0 \leq j \leq r-1}$ is a basis of $K_{\lambda} \otimes_{B(\mathbb{F}_{p^r})} B(k)$, we have
\[\sum_{i \equiv j \; \textrm{mod} \; r} x_ip^{[\frac{i}{r}]} = \sum_{i \equiv j \; \textrm{mod} \; r} x'_ip^{[\frac{i}{r}]}\]
for all $0 \leq j \leq r-1$. As such expression is unique in $B(k)$, we get that $n=n'$ and $x_i = x'_i$ for all $i \geq n$.
\end{proof}

Lemma \ref{lemma:valuationonisocrystal} allows us to define an $F$-valuation $w$ of slope $\lambda$ on $(M[1/p], \varphi[1/p])$. For any $x = \sum_{i \geq n}^{\infty}\xi^i \otimes x_i \in M[1/p]$, where $x_n \neq 0$, define $w(x) = n/r$; if $x_n = 0$, that is, $x=0$, define $w(x) = \infty$. Clearly it is a valuation. We check that it is an $F$-valuation of slope $\lambda$. The Frobenius automorphism $\varphi$ acts on $K_{\lambda} \otimes_{B(\mathbb{F}_{p^r})} B(k)$ as multiplication on the left by $\xi^s$ in the first coordinate and $\sigma$ in the second coordinate, hence if $w(x) = n/r$, then $w(\varphi x) = (n+s)/r = w(x) + \lambda$. 

As $w(M[1/p]) = (1/r)\mathbb{Z}$ and $w(M)$ is bounded from below, let $n_0/r$ be the smallest element in $w(M)$. Rescale $w$ by making the translation of $n_0/r$, then $w$ is the smallest $F$-valuation of slope $\lambda$ such that $w(M) \geq 0$ and thus $M_+ = M[1/p]^{w \geq 0}$. To ease notation, we still use $w$ to denote the $F$-valuation after the rescaling. 

By Theorem \ref{thm:anotherminimaldes} and Proposition \ref{prop:valuationoffbyconstant}, we know that $M_- = M[1/p]^{w \geq \alpha}$, where $\alpha$ is the smallest element such that $M[1/p]^{w \geq \alpha} \subset M$. As $\alpha := m_{\alpha}/r \in (1/r)\mathbb{Z}$, we can also write $M_- = M[1/p]^{w > \alpha-1/r}$.  

Since $q_{\mathcal{M}}$ is the smallest element such that $p^{q_{\mathcal{M}}}M_+ \subset M_-$, we know that $q_{\mathcal{M}} = \lceil \alpha \rceil$ or equivalently $q_{\mathcal{M}} = \lfloor \alpha-1/r \rfloor +1$. Thus to find $q_{\mathcal{M}}$, it is enough to find a good estimate (upper bound) of $\alpha$ or equivalently a good estimate of $m_{\alpha}$. We prove a lemma that allows us to estimate $m$.

\begin{lemma} \label{lemma:lowerboundofm}
Let $m_1$ be an integer that satisfies the following property: for any integer $m_2 \geq m_1$, there exists some element $x \in M$ such that $w(x) = m_2/r$. Then for any $y \in M[1/p]$ with $w(y) \geq m_1/r$, we have $y \in M$.
\end{lemma}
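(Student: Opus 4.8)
The plan is to use the explicit basis of $M$ adapted to the valuation $w$, combined with the fact that $M = M_+ = M[1/p]^{w\geq 0}$ is possible only after rescaling, but here $M$ itself need not equal $M[1/p]^{w\geq 0}$ since $\mathcal{M}$ is only isosimple, not minimal. Instead I would work directly with the hypothesis: since $w(M)$ is bounded below and takes values in $(1/r)\mathbb{Z}$, the sets $M^{w\geq j/r} := M \cap M[1/p]^{w\geq j/r}$ form a descending filtration of $W(k)$-submodules, and I want to understand $M$ as built from its graded pieces. The key structural input is that $\varphi^{r} = p^{s}\cdot(\text{unit action})$ on the ambient isocrystal, so $\xi$ (or an analogue of it acting within $M[1/p]$) shifts the valuation by exactly $1/r$; thus multiplication by $p$ shifts by $1$, and the graded pieces $M^{w\geq j/r}/M^{w> j/r}$ for $j$ in a fixed residue class mod $r$ are all "the same size" once $j$ is large enough.

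First I would fix, for each residue $j_0 \in \{0,1,\dots,r-1\}$ with the property that $w$ takes some value $\equiv j_0/r$ on $M$, the smallest such value and use the hypothesis on $m_1$ to guarantee that for all sufficiently large $m_2 \equiv j_0 \pmod r$ there is $x\in M$ with $w(x) = m_2/r$; by the hypothesis this holds for every $m_2 \geq m_1$, so in particular for every residue class. Next, given $y \in M[1/p]$ with $w(y) \geq m_1/r$, I would write, using Lemma~\ref{lemma:valuationonisocrystal}, $y = \sum_{i\geq n} \xi^{i}\otimes y_i$ with $n/r = w(y) \geq m_1/r$, and argue by "clearing the leading term": pick $x\in M$ with $w(x) = w(y) = n/r$; then the leading coefficients of $y$ and $x$ differ by a unit scalar in $W(k)$ (both are Teichmüller-type leading terms, and $k$ is a field), so there is a unit $u \in W(k)$ with $w(y - ux) > w(y)$, i.e. $w(y-ux) \geq (n+1)/r$. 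Since $ux \in M$, it suffices to show $y - ux \in M$, and $w(y-ux)$ is again $\geq m_1/r$, so we may iterate.

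The iteration produces a sequence $y = y^{(0)}, y^{(1)} = y^{(0)} - u_0 x_0, y^{(2)}, \dots$ with $y^{(t)} \in M[1/p]$, $w(y^{(t)})$ strictly increasing in steps of at least $1/r$, and $y - y^{(t)} = \sum_{l<t} u_l x_l \in M$. The partial sums $\sum_{l<t} u_l x_l$ converge $p$-adically in $M$ (because $w(x_l) = w(y^{(l)}) \to \infty$, hence $\mathrm{ord}_p(x_l)\to\infty$, and $M$ is $p$-adically complete and separated), and their limit is $y$; therefore $y \in M$. I would phrase this as: $M$ is $p$-adically closed in $M[1/p]$, and $y$ lies in the closure of $M$, hence in $M$.

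The main obstacle I anticipate is making the "leading coefficients differ by a unit" step fully rigorous, i.e. ensuring at each stage that I can find $x_t\in M$ with $w(x_t)$ \emph{exactly} equal to $w(y^{(t)})$ rather than merely $\geq$: this is precisely what the hypothesis on $m_1$ buys us, but one must check that once $w(y^{(0)})\geq m_1/r$ every subsequent $w(y^{(t)})$ is still $\geq m_1/r$ (true, since the valuation only increases) and is a value in $(1/r)\mathbb Z$ that is $\geq m_1/r$, so the hypothesis applies to it. A secondary technical point is the convergence of the series in $M$, which reduces to the standard fact that an $F$-crystal is a finite free, hence $p$-adically complete, $W(k)$-module, together with $\mathrm{ord}_p(u_l x_l) = \mathrm{ord}_p(x_l) \geq w(x_l) - (r-1)/r \to \infty$ (using that $\xi$-shifts change the valuation but the $p$-order of the Teichmüller coefficient is controlled by $w$ up to a bounded error of less than $1$). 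Neither obstacle is deep; the proof is essentially the successive-approximation argument underlying completeness.
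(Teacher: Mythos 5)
Your proof is correct and takes essentially the same approach as the paper's: write $y$ in the canonical $\xi$-expansion from Lemma~\ref{lemma:valuationonisocrystal}, clear the leading term against an element $x \in M$ of exactly the same valuation (whose existence is the hypothesis on $m_1$), iterate, and conclude by $p$-adic completeness of the finite free $W(k)$-module $M$. The filtration discussion in your first paragraph is not needed; the core successive-approximation argument in your second and third paragraphs matches the paper's proof.
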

\begin{proof}
Let $y \in M[1/p]$ be such that $w(y) = m_3/r \geq m_1/r$. Write $y = \sum_{i=m_3}^{\infty} \xi^i \otimes y_i$. By the condition of the lemma, we know that there exists $z \in M$ with $w(z) = m_3/r$. If $z = \sum_{i=m_3}^{\infty} \xi^i \otimes z_i$, then $w(y - zz^{-1}_{m_3}y_{m_3}) \geq m_3+1$. Note that $z^{-1}_{m_3}y_{m_3} \in W(k)$. Now repeat this process, we can write $y$ as an series of elements in $M$ with coefficients in $W(k)$, hence $y \in M$.
\end{proof}

\begin{corollary} 
The smallest $m_1$ satisfying the property in the Lemma \ref{lemma:lowerboundofm} is equal to $m_{\alpha}$.
\end{corollary}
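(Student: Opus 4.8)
The plan is to verify the two halves of the claimed equality separately: first that $m_{\alpha}$ itself satisfies the property isolated in Lemma \ref{lemma:lowerboundofm}, and second that no integer strictly smaller than $m_{\alpha}$ does.

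For the first half, I would fix an integer $m_2 \geq m_{\alpha}$ and invoke the fact established in the discussion preceding the corollary that $w(M[1/p]) = (1/r)\mathbb{Z}$; this produces an element $x \in M[1/p]$ with $w(x) = m_2/r$. Since $w(x) = m_2/r \geq m_{\alpha}/r = \alpha$, such an $x$ lies in $M[1/p]^{w \geq \alpha}$, which equals $M_-$ by Theorem \ref{thm:anotherminimaldes} together with Proposition \ref{prop:valuationoffbyconstant}, and $M_- \subset M$. Hence $x \in M$ witnesses the required property at the value $m_2$, so $m_{\alpha}$ satisfies the hypothesis of Lemma \ref{lemma:lowerboundofm}.

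For the second half, I would argue by contradiction: suppose some integer $m_1 < m_{\alpha}$ also satisfies the property in Lemma \ref{lemma:lowerboundofm}. Applying that lemma immediately yields $M[1/p]^{w \geq m_1/r} \subset M$. But $m_1/r < m_{\alpha}/r = \alpha$, which contradicts the defining minimality of $\alpha$ as the smallest element with $M[1/p]^{w \geq \alpha} \subset M$. Therefore $m_{\alpha}$ is the smallest integer satisfying the property, which is exactly the assertion.

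There is no genuine obstacle here, since both halves follow directly once the facts $w(M[1/p]) = (1/r)\mathbb{Z}$, $M_- = M[1/p]^{w \geq \alpha}$, and Lemma \ref{lemma:lowerboundofm} are in hand; the only point deserving a moment of care is confirming that the value $m_2/r$ is actually attained by $w$ on $M[1/p]$, which is precisely where the surjectivity of $w$ onto $(1/r)\mathbb{Z}$ is used.
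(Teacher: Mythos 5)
Your proof is correct and takes the same route as the paper, which simply says the claim follows from Lemma \ref{lemma:lowerboundofm} and the definition of $m_\alpha$; you have usefully unpacked both directions of that one-line argument, namely that $m_\alpha$ itself satisfies the hypothesis of the lemma (using $w(M[1/p]) = (1/r)\mathbb{Z}$ and $M[1/p]^{w\ge\alpha} = M_- \subset M$) and that no smaller integer can (by the minimality of $\alpha$).
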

\begin{proof}
This is by Lemma \ref{lemma:lowerboundofm} and the definition of $m_\alpha$.
\end{proof}

The following three operations map $M$ to itself.
\begin{enumerate}
\item The Frobenius map $\varphi$.
\item The Verschiebung map $\vartheta$, that is, the unique $\sigma^{-1}$-linear map such that $\varphi \vartheta = \vartheta \varphi = p^{e}$ where $e$ is the largest Hodge slope. 
\item Multiplication by $p$.
\end{enumerate}
For any nonzero $x = \sum_{i \geq n}\xi^i \otimes x_i \in M[1/p]$ with $x_n \neq 0$, we have
\begin{enumerate}
\item $w(\varphi x) = w(x) + \lambda = (n+s)/r$,
\item $w(\vartheta x) = w(x) + e - \lambda = (n+re-s)/r$.
\item $w(px) = w(x) + 1 = (n+r)/r$.
\end{enumerate}
As $w$ is the minimum $F$-valuation such that $w(M) \geq 0$, there exists an element $y \in M$ such that $w(y) = 0$. By applying the three operations mentioned above, we see that for any $n$ that is a  linear combination of $s, re-s$ and $r$ with coefficients in $\mathbb{Z}_{\geq 0}$, there exists an $x \in M$ such that $w(x) = n/r$. By Lemma \ref{lemma:lowerboundofm}, we know that $m_{\alpha} \leq g(s, re-s, r) + 1$ where $g(s, re-s, r)$ is the Frobenius number of $s, re-s$ and $r$, that is, it is the largest positive integer that cannot be expressed as a linear combination of $s, re-s$ and $r$ with coefficients in $\mathbb{Z}_{\geq 0}$. It exists as $\textrm{gcd}(s, re-s, r) = 1$.

Curtis \cite{Curtis:FN} showed that in some sense, a search for a simple closed formula of the Frobenius number of three or more natural numbers is impossible. There are many algorithms to find the Frobenius numbers of three or more natural numbers, for example see \cite{Denham:generating}. But our case is in a special situation and a closed formula for $g(s,re-s,r)$ can be found using the following theorem of Brauer and Shockley \cite{Brauer1962}.

\begin{theorem} \label{theorem:brauer}
Let $x, y$ and $z$ be three pairwise coprime positive integers. If $y+z \equiv 0$ modulo $x$, then the Frobenius number of $x$, $y$ and $z$ is
\[g(x,y,z) = \mathrm{max}\{\lfloor \frac{xz}{y+z} \rfloor y, \lfloor \frac{xy}{y+z} \rfloor z \} -x.\]
\end{theorem}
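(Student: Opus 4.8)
The plan is to prove this via the Ap\'ery-set description of the Frobenius number, which is precisely the method of Brauer and Shockley. Let $S=\{ax+by+cz\mid a,b,c\in\mathbb{Z}_{\ge 0}\}$ be the numerical semigroup generated by $x,y,z$; since these are pairwise coprime we have $\mathrm{gcd}(x,y,z)=1$, so $\mathbb{Z}_{\ge 0}\setminus S$ is finite. For each $j\in\{0,1,\dots,x-1\}$ let $w_j$ be the smallest element of $S$ congruent to $j$ modulo $x$. The first step is the standard remark that an integer $n$ with $n\equiv j\pmod x$ lies in $S$ if and only if $n\ge w_j$ (write $n=w_j+(n-w_j)$, with $n-w_j$ a nonnegative multiple of $x$); hence the largest integer not in $S$ is $g(x,y,z)=\max_{0\le j<x}w_j-x$, and everything reduces to computing the $w_j$.

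The second step uses the hypothesis $x\mid y+z$. Dropping a nonnegative multiple of $x$ is free, so $w_j=\min\{by+cz\mid b,c\in\mathbb{Z}_{\ge 0},\ by+cz\equiv j\pmod x\}$. Since $z\equiv-y\pmod x$, we have $by+cz\equiv(b-c)y\pmod x$, and as $\mathrm{gcd}(x,y)=1$ this residue depends only on $b-c$ modulo $x$. For a prescribed integer $\delta=b-c$, the minimum of $by+cz$ over $b,c\ge 0$ is $\delta y$ when $\delta\ge 0$ (take $c=0$) and $|\delta|z$ when $\delta\le 0$ (take $b=0$), because replacing $(b,c)$ by $(b+1,c+1)$ adds $y+z>0$. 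The resulting function of $\delta$ is V-shaped, so its minimum over a fixed residue class $\{\delta_0+mx\mid m\in\mathbb{Z}\}$ modulo $x$ is attained at the two members straddling $0$; writing $t\in\{0,\dots,x-1\}$ for the representative of that class, the minimum equals $\min\{ty,(x-t)z\}$. As $j$ ranges over all classes modulo $x$, so does $t$ (bijectively, via $t\equiv jy^{-1}$), whence
\[g(x,y,z)+x=\max_{0\le t\le x-1}\min\{ty,(x-t)z\}.\]

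The third step evaluates this maximum. The function $h(t)=\min\{ty,(x-t)z\}$ extends to a concave piecewise-linear function on $[0,x]$ that increases with slope $y$ up to $t^{*}=\tfrac{xz}{y+z}$ and then decreases with slope $-z$; note $0<t^{*}<x$ and $x-t^{*}=\tfrac{xy}{y+z}$. The maximum of a concave function over the integers in an interval containing its real maximizer is attained at $\lfloor t^{*}\rfloor$ or $\lceil t^{*}\rceil$, where the values are $\lfloor t^{*}\rfloor y=\lfloor\tfrac{xz}{y+z}\rfloor y$ and $(x-\lceil t^{*}\rceil)z=\lfloor x-t^{*}\rfloor z=\lfloor\tfrac{xy}{y+z}\rfloor z$ (using $x-\lceil a\rceil=\lfloor x-a\rfloor$). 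A brief check shows these integers lie in $\{0,\dots,x-1\}$ except possibly when $\lceil t^{*}\rceil=x$, in which case $h$ vanishes there and the other term already attains the maximum; the degenerate case $x=1$ makes both sides $-1$. Substituting into the displayed identity gives the asserted formula.

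The point requiring the most care is the second step: showing cleanly that $w_j=\min\{ty,(x-t)z\}$. This is where the congruence hypothesis $x\mid y+z$ is genuinely used, and it rests on the two elementary sub-claims above (the optimal $(b,c)$ for a prescribed difference, and the minimization of a V-shaped function along an arithmetic progression). The rest --- the Ap\'ery-set formula for $g$ and the concavity computation with floor functions --- is routine bookkeeping.
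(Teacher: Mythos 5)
The paper does not prove this theorem; it cites it as a known result of Brauer and Shockley. Your proof is correct and follows essentially the approach of that cited source: you compute the Ap\'ery set $\{w_0,\dots,w_{x-1}\}$ of the semigroup with respect to $x$, use the congruence $z\equiv-y\pmod x$ to reduce each $w_j$ to a minimization over a single integer parameter with a V-shaped cost, and then maximize the resulting concave piecewise-linear function $h(t)=\min\{ty,(x-t)z\}$ over integers $t\in\{0,\dots,x-1\}$. The only point deserving a slightly fuller sentence in a write-up is the boundary case $\lceil t^{*}\rceil=x$ at the end of Step 3 (you flag it but dispatch it quickly); you correctly observe that then $\lfloor x-t^{*}\rfloor=0$ so the corresponding term vanishes and the formula is unaffected, but it is worth spelling out that this is exactly the case where the maximum over $\{0,\dots,x-1\}$ is forced to sit at $\lfloor t^{*}\rfloor$. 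Otherwise the argument is complete and correct.
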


We are now ready to prove Theorem \ref{theorem:maintheoremb}.

\begin{proof}[Proof of Theorem \ref{theorem:maintheoremb}]
By the previous discussion, we know that $q_{\mathcal{M}} = \lfloor \alpha -1/r \rfloor + 1$. Since $\alpha = m_{\alpha}/r$ and $m_{\alpha} \leq g(s,re-s,r)+1$, we have \[q_{\mathcal{M}} \leq \lfloor \frac{g(s,re-s,r)}{r} \rfloor + 1.\]
By Corollary \ref{cor:estimateq}, we have
\[n_{\mathcal{M}} \leq 2q_{\mathcal{M}} + 1 \leq 2 \lfloor \frac{g(s,re-s,r)}{r} \rfloor +3. \tag{*}\]
By Theorem \ref{theorem:brauer}, let $x = r$, $y=s$ and $z=re-s$, we get
\[g(s,re-s,r) = \textrm{max}\{\lfloor \frac{re-s}{e} \rfloor s, \lfloor \frac{s}{e} \rfloor (re-s) \} -r.\]
Plug this into (*), we get the desired upper bound which proves Theorem \ref{theorem:maintheoremb}.
\end{proof}

We compare our new upper bound with some other upper bounds in \cite{Xiao:computing}, \cite{Vasiu:traversosolved} and \cite{Xiao:vasiuconjecture} in the following examples.

\begin{remark}
The upper bound of Theorem \ref{theorem:maintheoremb} can be restated using $\lambda$ as
\[n_{\mathcal{M}} \leq 2\mathrm{max}\{{\Big \lfloor}(r-\lceil\frac{s}{e}\rceil)\lambda {\Big \rfloor}, {\Big \lfloor} \lfloor \frac{s}{e}\rfloor (e- \lambda) {\Big \rfloor}\}+1.\]
\end{remark}

\begin{example} \label{example1}
Let $0,1,3$ be the Hodge slopes of an isosimple $F$-crystal $\mathcal{M}$. The Newton slope is $4/3$. From \cite[Theorem 1.2]{Xiao:computing}, we get that $n_{\mathcal{M}} \leq \lfloor 3 + (2-1)\frac{4}{3} \rfloor = 4$. By Theorem \ref{theorem:maintheoremb}, we get a slightly better upper bound $n_{\mathcal{M}} \leq 3$.
\end{example}

\begin{example} \label{example2}
Let $\mathcal{M}$ be an isosimple Dieudonn\'e module. We have $s = d$, $re-s = c$, and $r = c+d$ where $d$ and $c$ are the dimension and codimension of $\mathcal{M}$ respectively. Therefore $g(s, re-s, r) = g(d, c) = cd-c-d$. By Theorem \ref{theorem:maintheoremb}, we have $n_{\mathcal{M}} \leq 2\lfloor cd/(c+d) \rfloor +1$. This is almost as good as the optimal estimate which is $\lfloor 2cd/(c+d) \rfloor$; see \cite[Theorem 1.3]{Vasiu:traversosolved} and \cite[Corollary 3.5]{Xiao:computing}. The two estimates are equal if $cd/(c+d) - \lfloor cd/(c+d) \rfloor \geq 1/2$.
\end{example}

\begin{example} \label{example3}
Let $\mathcal{M}$ be an isosimple $F$-crystal of rank $2$. Then its Hodge slopes are $0, e$ where $e$ is an odd positive integer (otherwise $(M, \varphi)$ is not isosimple). Theorem \ref{theorem:maintheoremb} predicts that $n_{\mathcal{M}} \leq e$. This is optimal according to \cite[Theorem 6.1]{Xiao:vasiuconjecture}.
\end{example}

\section*{Acknowledgement}
The author would like to thank Adrian Vasiu for several suggestions during the preparation of this manuscript.

\bibliographystyle{amsplain}
\bibliography{references}

\end{document}